\documentclass{article}
\usepackage[left=1cm,right=1cm,top=2cm,bottom=1.5cm]{geometry}

\usepackage{bbm}
\usepackage{amssymb}
\usepackage{amsmath}
\usepackage{amsthm}
\usepackage{enumitem}
\usepackage{tocloft} 
\usepackage{hyperref} 

\numberwithin{equation}{section}

\theoremstyle{plain}
\newtheorem{thrm}{Theorem}[section]
\newtheorem{lmm}[thrm]{Lemma}
\newtheorem{crllr}[thrm]{Corollary}
\newtheorem{prpstn}[thrm]{Proposition}

\theoremstyle{definition}
\newtheorem{dfntn}[thrm]{Definition}

\newtheorem{rmrk}[thrm]{Remark}

\theoremstyle{plain}

  \def\xR{\mathbb{R}}
  \def\xN{\mathbb{N}}
 
\def\xCzero{{\rm C}^{0}}
\def\xCone{{\rm C}^{1}}

\def\xHone{{\rm H}^{1}}
\def\xHtwo{{\rm H}^{2}}

\def\xWn#1{{\rm W}^{#1}}

\def\xLone{{\rm L}^{1}}
\def\xLtwo{{\rm L}^{2}} 
\def\xLinfty{{\rm L}^{\infty}} 
\def\xLn#1{{\rm L}^{#1}}

\def\xdif{\,{\rm d}}

\def\xSO{\mathop{\rm SO\,}\nolimits}

\def\O{\Omega}

\def\cF{\mathcal{F}}
\def\cB{\mathcal{B}}
\def\cM{\mathcal{M}}
\def\cS{\mathcal{S}}
\def\cT{\mathcal{T}}
\def\cK{\mathcal{K}}
\def\cD{\mathcal{D}}
\def\cU{\mathcal{U}}

\def\xT{\mathbb{T}}
\def\xD{\mathbb{D}}
\def\xO{\mathbb{O}}
\def\xI{\mathbb{I}}
\def\xJ{\mathbb{J}}
\def\x1{\mathbbm{1}}	

\def\xdiv{\mathop{\rm div\,}\nolimits}
\def\xdist{\mathop{\rm dist\,}\nolimits}
\def\xspan{\mathop{\rm span\,}\nolimits}

\def\xLbeta{{\rm L}^{\beta}}
\def\xL{{\rm L}}
\def\xloc{{\rm loc}}
\def\xweak{{\rm weak}}

\def\eg{\emph{e.g.\/}}

\title {Existence of solutions for an interaction problem between a bubble and a compressible viscous fluid}
\author {
{\sc Fabien Lespagnol$^1$ and Matthieu Hillairet$^1$} \\
\\
$^1$ {\small IMAG, ANGUS, Univ Montpellier, Inria, CNRS, Montpellier, France}}

\begin{document}

\maketitle {}

\begin{abstract} 
In this paper, we study the dynamics of a finite number of spherical bubbles in a compressible fluid within a bounded open domain of $\xR^3$. The fluid-bubble interaction is described by a system of nonlinear partial differential equations (PDEs) and ordinary differential equations (ODEs) coupling the fluid's density, velocity and pressure to the bubble's translational, rotational and radial velocities. We prove the existence of weak solutions for this model until the collision or collapse of the bubbles. The formulation of the fluid–bubble system, along with the techniques used for the existence proof, is inspired by penalization methods developed for fluid–solid interaction. The main contribution of this work is the addition of a radial expansion-contraction mode in the bubble motion, which introduces new nonlinear terms in the momentum equations that need to be treated carefully in the compactness arguments.
\end{abstract}

\tableofcontents

\section{Introduction}
Let $m$ be the number of bubbles immersed in the fluid flow, for each $i \in \{1, \cdots, m\}$ and time $t \in \xR_+$, we denote by  $\cB_i(t)$  and $\cF(t)$ the time-dependent domains occupied by bubble $i$ and the fluid, respectively. We assume that the bubbles move inside a fixed, bounded, smooth domain $\O \subset \xR^3$ and  
$$\cF(t) = \O \setminus \bigcup \limits_{i = 1}^m \overline {\cB_i(t)}.$$
The fluid flow is assumed to satisfy the isentropic compressible Navier-Stokes equations. We denote by $\rho_f$, $u_f$ and $p_f$ its density, velocity and pressure, respectively. The system reads
\begin{align}
&\partial_t \rho_f + \xdiv (\rho_f u_f) = 0, \quad t \in \xR_+^*, \quad x \in \cF(t), \label{eq:intro:rhof_continuity_equation} \\ 
&\partial_t (\rho_f u_f) + \xdiv (\rho_f u_f \otimes  u_f) = \xdiv (\xT_f(u_f, p_f)) - \rho_f g, \quad t \in \xR_+^*, \quad x \in \cF(t), \label{eq:intro:rhofuf_momentum_equation}
\end{align} 
with $- \rho_f g$ the gravitational force. We assume the fluid is Newtonian with stress tensor $\xT_f (u_f, p_f) $ given by the constitutive law 
\begin{equation*} 
\xT_f (u_f, p_f) = 2 \mu_f \big(\xD(u_f) - \frac13 \xdiv(u_f) \xI_3 \big) + \big(\nu_f \xdiv (u_f) - p_f \big) \xI_3, 
\end{equation*}  
where $$\xD(u_f) = \frac12 \big(\nabla u_f + (\nabla u_f)^{\top} \big)$$ denotes the symmetric part of the velocity gradient, $\xI_3$ the identity matrix in $\xR^{3 \times 3}$, and $\mu_f > 0$, $\nu_f \geq 0$ the shear and bulk viscosity coefficients. We further assume that the flow is in the barotropic regime and the relation between $p_f$ and $\rho_f$ is given by the constitutive law
\begin{equation}
p_f = p_f(\rho_f) = a_f \rho_f^{\gamma_f} \; \mbox{with} \; a_f > 0, \; \gamma_f > \frac32. 
\label{eq:intro:pf_definition}
\end{equation} 
We now formulate the equations describing the kinematics and dynamics of the bubbles. This model, introduced in \cite{Hillairet2023MMN, HillairetII2023MMN}, is based on the assumptions that the bubbles are spherical and behave as a compressible viscous fluid with infinite shear viscosity. In particular, denoting by $\cB(x, R)$ the ball of center $x \in \xR^3$ and radius $R \in \xR_+$, for each $i \in \{1, \cdots, m\}$ and time $t \in \xR_+$, there exists $R_i(t) \in \xR_+$ and $ x_i(t) \in \xR^3$ such that  
\begin{equation*}
\cB_i(t) = \cB(x_i(t), R_i(t)).
\end{equation*} 
Setting $\cB_{i, 0} = \cB_i(0)$, $R_{i, 0} = R_i(0)$, $ x_{i, 0} = x_i(0)$, the motion of the bubble $\cB_i$ is then given by 
\begin{equation*}
\cB_i(t) =  \eta_i[t](\cB_{i, 0}) \quad \forall \, t \in \xR_+, 
\end{equation*} 
where
\begin{equation}
\eta_i [t] :  \left\{
\begin{aligned}
& \xR^3 \rightarrow \xR^3 \\ 
& \hat {x} \mapsto  x_i(t) + \frac{R_i(t)}{R_{i, 0}} \xO_i(t) (\hat {x} - x_{i, 0})
\end{aligned} 
\right.  
\label{eq:intro:propagator_definition}
\end{equation} 
and $\xO_i : \xR_+ \rightarrow \xSO(3)$ is a rotation matrix satisfying $\xO_i(0) = \xI_3$. The Lagrangian velocity of bubble $i$ is then given for $(t, \hat {x}) \in \xR_+ \times \cB_{i, 0}$ by 
\begin{equation}
\hat {u}_i (t, \hat {x}) = \partial_t \eta_i[t](\hat {x}) = \dot {x}_i(t) + \frac{\dot {R}_i(t)}{R_{i, 0}} \xO_i(t) (\hat {x} - x_{i, 0}) +  \frac{R_i(t)}{R_{i, 0}}\dot {\xO}_i(t) (\hat {x} - x_{i, 0}).
\label{eq:intro:wub_definition}
\end{equation} 
From \eqref{eq:intro:wub_definition}, we can deduce the following expression for the Eulerian velocity field $ u_i$ of bubble $i$: 
\begin{equation}
 u_i(t,  x) = \left(\partial_t \eta_i[t] \right) \left((\eta_i[t])^{-1}(x)\right) =  V_i(t) +  \omega_i(t) \times (x - x_i(t)) + \frac13 \Lambda_i(t) (x - x_i(t)), \quad t \in \xR_+, \quad x \in \cB_i(t),  
\label{eq:intro:ub_definition}
\end{equation} 
where
\begin{equation}
V_i = \dot {x}_i, \quad \frac13 \Lambda_i = \frac{\dot {R}_i}{R_i}
\label{eq:intro:VbLb_definitions}
\end{equation} 
and $\omega_i$ is the unique vector-valued function satisfying 
\begin{equation}
\quad \dot {\xO}_i \xO_i^{\top} x = \omega_i \times x \quad \forall \, x \in \xR^3.
\label{eq:intro:ob_definition}
\end{equation} 
The quantities $V_i$, $\omega_i$ and $\Lambda_i$ then correspond to the translational, rotational and radial components of the velocity of bubble $i$, respectively. Regarding the bubble dynamics, we assume that the density of each bubble $\rho_i$ is uniform in $\cB_i$ so that $\rho_i$ is expressed in terms of the bubble radius $R_i$ and mass $m_i$ by
\begin{equation}
\rho_i = \left(\frac{R_{i, 0}}{R_i} \right)^3 = \frac{3 m_i}{4 \pi R_i^3}.
\label{eq:intro:mb_definition}
\end{equation}
Moreover, the center of mass of each bubble coincides with its geometric center, that is
\begin{equation}
\frac{1}{m_i} \int_{\cB_i} \rho_i y \xdif  y = \frac{3}{4 \pi R_i^3} \int_{\cB_i} y \xdif y = x_i.
\label{eq:intro:xb_definition}
\end{equation} 
We also introduce the inertia tensor of the bubble $i$ defined by 
\begin{equation}
\xJ_i = \int_{\cB_i} \rho_i \left(|y - x_i|^2 \xI_3 - (y - x_i) \otimes (y - x_i) \right) \xdif y = \frac{2}{5} m_i R_i^2 \xI_3, 
\label{eq:intro:Jb_definition}    
\end{equation}
and a mechanical parameter $K_i : \xR_+ \rightarrow \xR_+$  analogous to $\xJ_i$, quantifying the contribution of radius variations to kinetic energy, defined by
\begin{equation}
K_i = \frac19 \int_{\cB_i} \rho_i |y - x_i|^2 \xdif y = \frac{m_i}{15} R_i^2.
\label{eq:intro:Kb_definition}
\end{equation}
The dynamics equations of the bubble $i \in \{1, \cdots ,m\}$ read
\begin{align}
&  \frac {\xdif}{\xdif t}(m_i  V_i) = -\int_{\partial \cB_i} \xT_f ( u_f, p_f) n \xdif s - \int_{\cB_i} \rho_i  g \xdif y, \label{eq:intro:bubble_translation_dynamic} \\
& \frac{\xdif}{\xdif t} (\xJ_i  \omega_i) = - \int_{\partial \cB_i} (s -  x_i) \times \xT_f(u_f, p_f)  n \xdif s, \label{eq:intro:bubble_rotation_dynamic} \\
& \frac{\xdif}{\xdif t} (K_i \Lambda_i) - \frac13 \left((\xJ_i \omega_i) \cdot \omega_i + K_i |\Lambda_i|^2 \right) = - \frac13 \int_{\partial \cB_i} (s - x_i) \cdot \xT_f (u_f, p_f)  n  \xdif s + \Big(\nu_i \Lambda_i - p_i + \frac{\kappa_i}{R_i} \Big) |\cB_i|,  \label{eq:intro:bubble_expansion_dynamic} 
\end{align} 
where $\nu_i$ is the bulk viscosity coefficient, $\kappa_i$ a surface-tension coefficient and $p_i$ is the bubble pressure satisfying
\begin{equation} 
p_i = p_i(\rho_i) = a_i \rho_i^{\gamma_i} \; \mbox{with} \; a_i > 0, \; \gamma_i > \frac32. 
\label{eq:intro:pb_definition}
\end{equation}
To close the system, we impose the no-slip condition at each fluid–bubble interface and homogeneous Dirichlet boundary conditions on $\partial \Omega$:\begin{align}
 u_f - u_i = 0 \; &\mbox {on} \; \partial \cB_i,  
\label{eq:intro:noslip_itf_cdt} \\ 
u_f = 0 \; &\mbox {on} \; \partial \O. 
\end{align}
We also prescribe the initial data:  
\begin{equation}
\begin{aligned}
& \rho_f(0,  x) = \rho_{f, 0}(x), \quad (\rho_f  u_f)(0,  x) =  q_{f, 0} (x), \quad x \in \cF(0), \\ 
& \rho_i(0, x) = \rho_{i, 0}, \quad u_i(0, x) = V_{i, 0} + \omega_{i, 0} \times (x - x_{i, 0}) + \frac{\Lambda_{i, 0}}{3} (x - x_{i, 0}), \quad x \in \cB_i(0). 
\end{aligned} 
\label{eq:intro:initial_data}
\end{equation} 
\begin{rmrk}
Note that, by substituting equations \eqref{eq:intro:VbLb_definitions}-\eqref{eq:intro:ob_definition} and \eqref{eq:intro:mb_definition}-\eqref{eq:intro:Kb_definition} into \eqref{eq:intro:bubble_translation_dynamic}-\eqref{eq:intro:bubble_expansion_dynamic}, we obtain the dynamical equations governing the evolution of the bubble $\cB_i$ in terms of its center of mass $x_i$, its radius $R_i$ and the rotational velocity $\omega_i$: 
\begin{align*}
&\frac{\xdif}{\xdif t} \left(m_i \dot {x}_i \right) = - \int_{\partial \cB_i} \xT_f(u_f, p_f)  n \xdif  s - m_i  g, \\ 
&\frac{\xdif}{\xdif t} \left(\frac{2 m_i}{5} R_i^2 \omega_i \right) = -\int_{\partial \cB_i} (s - x_i) \times \xT_f (u_f, p_f) n \xdif s, \\ 
&\frac {\xdif}{\xdif t} \left(\frac{m_i}{5} \dot {R}_i R_i \right) - \frac{m_i R_i^2}{3} \left(\frac{2}{5} |\omega_i|^2 + \frac{3}{5} \left|\frac{\dot{R}_i}{R_i}\right|^2 \right) = \\ 
&\hspace{3cm} - \frac13 \int_{\partial \cB_i} (s - x_i) \cdot \xT_f(u_f, p_f)  n \xdif s - \left(3 \nu_i \frac{\dot {R}_i}{R_i} - p_i\left(\frac{3 m_i}{4 \pi R_i^3}\right) + \frac{\kappa_i}{R_i} \right) \frac{4 \pi R_i^3}{3}.  
\end{align*} 
\end{rmrk}

\begin{rmrk}
The second term in the left-hand side of \eqref{eq:intro:bubble_expansion_dynamic} appears due to the time-dependence of $\xJ_i$ and $K_i$, resulting from the evolution of $R_i$. These terms ensure that the left-hand side conserves the kinetic energy. In particular, these additional terms allow to recover formally the following energy identity for the bubble $i$:
\begin{equation*}
\frac{\xdif}{\xdif t} \left(\int_{\cB_i} \rho_i \frac{|u_i|^2}{2} + a_i \frac{\rho_i^{\gamma_i}}{\gamma_i - 1} \xdif y \right) +  \int_{\cB_i} \nu_i|\Lambda_i|^2  \xdif y + \frac{2\pi}{3} \kappa_i \frac{\xdif}{\xdif t} |R_i|^2  = - \int_{\partial \cB_i} \xT_f(u_f, p_f) n \cdot u_i \xdif s.  
\end{equation*}
\end{rmrk}

\subsection{Weak formulation} \label{subsec:weak_formulation}
In this paragraph, we formally derive a weak formulation for the system \eqref{eq:intro:rhof_continuity_equation}-\eqref{eq:intro:initial_data}. We set  
\begin{equation}
\rho = \rho_f \x1_{\cF} + \sum_{i = 1}^m \rho_i \x1_{\cB_i}, \quad  u =  u_f \x1_{\cF} + \sum_{i=1}^m  u_i \x1_{\cB_i}.
\label{eq:intro:rho_u_definitions}
\end{equation}
Let $T > 0$, from \eqref{eq:intro:rhof_continuity_equation}, \eqref{eq:intro:mb_definition} and \eqref{eq:intro:rho_u_definitions}, we obtain for the continuity equation 
\begin{equation}
\int_0^T \int_\O \rho \partial_t \varphi + (\rho  u) \cdot \nabla  \varphi \xdif  y \mathrm  {d} \tau = 0
\label{eq:intro:rho_continuity_equation_weak}
\end{equation}
for all $\varphi \in \cD ((0, T) \times \O)$. To derive a weak formulation of the momentum equation for the fluid and the bubble dynamics, we first define the bubble and fluid regions as follows:
\begin{equation*}
\overline {Q_i} = \{ (t,  x) \; | \; t \in [0, T], \; x \in \overline {\cB_i (t)} \},
\quad
\overline {Q_b} = \bigcup \limits_{i=1}^m \overline {Q_i}, \quad Q_f = \left((0, T) \times \O \right) \backslash \overline {Q_b}.
\end{equation*} 
We also introduce the space of motion and velocity fields corresponding to the kinematics of the bubbles:
\begin{align*}
\cM  &= \left\{ \phi_b \; | \; \exists \, a \in \xR^3, \; R \in \xR, \; \xO \in SO(3), \quad \phi_b(x) = a + R \xO x \quad \forall \, x \in \xR^3 \right\}. \\
\cS &= \left\{\varphi_b  \; | \; \exists \; V, \omega \in \xR^3, \; \Lambda \in \xR, \quad \varphi_b(x) = V + \omega \times x + \frac{\Lambda}{3} x \quad \forall \, x \in \xR^3 \right\}.   
\end{align*}
Note that the spaces $\cM$ and $\cS$ inherit the canonical topology of finite-dimensional real vector spaces. Finally, we set the following test functions space $\cT (Q_b)$ over $(0, T) \times \O$:
\begin{equation*}
\cT (Q_b) = \left\{ 
\begin{aligned} 
&\varphi \in \cD((0, T) \times \O) \; \mbox{such that there exists} \;  \{\varphi_i\}_{i= 1}^m \subset \cD(0, T; \cS) \\
& \mbox{satisfying for} \; i \in \{1, \cdots, m\}, \; \varphi = \varphi_i \; \mbox{in an open neighborhood of} \; \overline {Q_i} 
\end{aligned}
\right\}.
\end{equation*}
Observe that the space of test functions $\cT (Q_b)$ depends on the solution through the domains $\cF(t)$ and $\cB_i(t)$. Combining  \eqref{eq:intro:rhofuf_momentum_equation} with \eqref{eq:intro:bubble_translation_dynamic}–\eqref{eq:intro:bubble_expansion_dynamic} and \eqref{eq:intro:rho_u_definitions}, we obtain the following weak formulation for the momentum equation:
\begin{multline}
\int_0^T \int_\O (\rho u) \cdot \partial_t \varphi + \left(\rho u \otimes u \right) : \xD(\varphi) + p (\rho, \{\x1_{\cB_i} \}_{i = 1}^m) \xdiv (\varphi) \xdif y \xdif \tau \\ 
= \int_0^T \int_\O 2 \mu \left(\xD(u) - \frac13 \xdiv (u) \xI_3 \right) : \left( \xD(\varphi) - \frac13 \xdiv (\varphi) \xI_3 \right) \xdif y \xdif \tau \\ 
+ \int_0^T \int_\O \nu \xdiv (u) \xdiv (\varphi) + \tilde {\kappa}_b \xdiv (\varphi) - \rho g \cdot \varphi \xdif y \xdif \tau
\label{eq:intro:rhou_momentum_equation_weak}
\end{multline} 
for all $ \varphi \in  {\cT}(Q_b)$, where 
\begin{align*}
&\mu = \mu_f \x1_{\cF}, \quad \nu = \nu_f \x1_{\cF} + \sum_{i=1}^m \nu_i \x1_{\cB_i}, \quad \tilde {\kappa}_b = \sum_{i=1}^m \frac{\kappa_i}{R_i} \x1_{\cB_i}, \\
& p (\rho, \{ \x1_{\cB_i} \}_{i = 1}^m)  = a_f \rho^{\gamma_f} \left(1 - \sum_{i=1}^m \x1_{\cB_i} \right) + \sum_{i = 1}^m a_i \rho^{\gamma_i} \x1_{\cB_i}.
\end{align*} 
\begin{rmrk}
For all $V, \omega, a, b \in \xR^3$, $\Lambda \in \xR$ and $x \in \xR^3$,  
\begin{equation*}
V +  \omega \times (x -  a) + \frac{\Lambda}{3} (x - a)  = \tilde {V} + \tilde {\omega} \times (x - b) + \frac{\tilde {\Lambda}}{3} (x - b) 
\end{equation*} 
where 
\begin{equation*} 
\tilde {V} = V - \omega \times (a - b) - \frac{\Lambda}{3} (a - b), \quad \tilde {\omega} = \omega, \quad \tilde {\Lambda} = \Lambda.
\end{equation*}
Consequently, for all $a \in \xR^3$, the space $\cS$ can equivalently be defined by
\begin{equation*}
\cS  = \left\{ \varphi_b \; | \; \exists \;  V, \omega \in \xR^3, \; \Lambda \in \xR, \quad \varphi_b( x) =  V + \omega \times (x - a) + \frac{\Lambda}{3} (x - a) \quad \forall \, x \in \xR^3 \right\}.   
\end{equation*} 
In particular, any test function $\varphi \in \cT (Q_b)$ satisfies
\begin{equation*}
\varphi(t, x) = \tilde {V}_i(t) + \tilde {\omega}_i(t) \times (x - x_i(t)) + \frac{\tilde {\Lambda}_i(t)}{3}(x - x_i(t)) \; \mbox{on a neighbourhood of} \; \overline{Q_i},
\end{equation*} 
with $\tilde {V}_i, \, \tilde {\omega}_i, \, \tilde {\Lambda}_i \in \cD(0, T)$.  
\end{rmrk} 
\begin{dfntn}
Let $\O \subset \xR^3$ be a bounded smooth domain. For each $i \in \{1, \cdots, m\}$, let $\cB_{i, 0} \subset \O$ be a ball of radius $R_{i, 0} \in (0, \infty)$ and center $ x_{i, 0} \in \xR^3$. Let $\rho_{i, 0} \in (0, \infty)$,  $V_{i, 0}, \omega_{i, 0} \in \xR^3$ and $\Lambda_{i, 0} \in \xR$. We prescribe the following initial conditions: 
\begin{align}
& \rho_0 \in \xLn{\gamma_f}(\O), \quad \rho_0 \geq 0 \; \mbox{a.e in} \; \O,  \quad \rho_0 = \rho_{i, 0} \; \mbox{a.e. in} \; \cB_{i, 0}, \label{eq:intro:rho0_regularity}\\   
&q_0 \in  \xLn{\frac{2\gamma_f}{\gamma_f + 1}}(\O), \quad q_0 \x1_{\rho_0 = 0} =  0 \; \mbox{a.e. in} \; \O, \quad \frac{|q_0|^2}{\rho_0} \x1_{\rho_0 > 0} \in \xLn1(\O), \label{eq:intro:q0_regularity} \\  
& u_0 = V_{i, 0} + \omega_{i, 0} \times (x - x_{i, 0}) + \frac{\Lambda_{i, 0}}{3}(x - x_{i, 0}) \; \mbox{a.e. in} \; \cB_{i, 0}, \quad q_0 = \rho_0 u_0 \; \text{a.e. in} \; \cB_{i,0}. \label{eq:intro:u0_compatibility} 
\end{align}
A triplet $(\{\cB_i\}_{i = 1}^m, \rho, u)$ is a bounded energy weak solution of the system \eqref{eq:intro:rhof_continuity_equation}-\eqref{eq:intro:rhofuf_momentum_equation}, \eqref{eq:intro:bubble_translation_dynamic}-\eqref{eq:intro:bubble_expansion_dynamic}, \eqref{eq:intro:noslip_itf_cdt}-\eqref{eq:intro:initial_data}, associated with initial conditions \eqref{eq:intro:rho0_regularity}-\eqref{eq:intro:u0_compatibility} if the following holds
\begin{itemize}[leftmargin=0pt]
\item For all $i \in \{1, \hdots, m\}$ and $t \in [0, T]$, $\cB_i(t)$ is a ball of radius $R_i(t) \in (0, \infty)$ and center $x_i(t) \in \xR^3$.   
\item The velocity $u$ belongs to $\xLtwo(0, T; \xHone(\O))$ and there exists $\eta_i \in \xHone([0, T]; \cM)$ such that
\begin{equation}
\left\{
\begin{aligned}
& \cB_i(t) = \eta_i[t](\cB_{i, 0}) \; \mbox{for all} \; t \in [0, T], \\  
& u(t, x) = \left(\partial_t \eta_i[t] \right) \left((\eta_i[t])^{-1}(x)\right)  \; \mbox{for a.e.} \; (t, x) \in Q_i.
\end{aligned}
\right.  
\label{eq:intro:u_compatibility} 
\end{equation} 
\item The density $\rho$ belongs to $\xLinfty(0, T; \xLn{\gamma_f}(\O))$ and satisfies 
\begin{equation}
\rho(t, x) = (R_{i, 0}/R_i(t))^3 \rho_{i, 0} \; \mbox{for a.e.} \; (t, x) \in Q_i. 
\label{eq:intro:rho_compatibility}
\end{equation} 
\item Provided that they are extended by $0$ in $\xR^3 \setminus \O$, $\rho$ and $u$ satisfy the continuity equation \eqref{eq:intro:rho_continuity_equation_weak} for all $\varphi \in \cD((0,T)\times \xR^3)$ together with its renormalized form   
\begin{equation}
\partial_t b(\rho) + \xdiv (b(\rho)  u) + (b^\prime(\rho) \rho - b(\rho)) \xdiv ( u) = 0 \; \mbox{in} \; \cD^\prime((0, T) \times \xR^3),
\label{eq:intro:rho_renormalized_continuity_equation}
\end{equation} 
for any $b \in \xCzero([0, \infty)) \cap \xCone((0, \infty))$ satisfying 
\begin{equation}
|b^\prime(z)| \leq c z^{-\lambda_0}, \; z \in (0, 1],  \quad |b^\prime(z)| \leq c z^{\lambda_1}, \; z \geq 1, 
\label{eq:intro:b_cdt_renormalized_solution} 
\end{equation} 
with $\lambda_0 < 1, \; -1 < \lambda_1 \leq \frac{s(\gamma_f)}{2} - 1, \; s(\gamma_f) = \frac{5\gamma_f - 3}{3}.$
\item The weak form of the momentum equation \eqref{eq:intro:rhou_momentum_equation_weak} holds for all $\varphi \in  {\cT}(Q_b)$. 
\item The initial conditions are satisfied in the following sense: 
\begin{align}
& \lim \limits_{t \rightarrow 0^+}  \int_\O \rho(t) \varphi \xdif \tau = \int_\O \rho_0 \varphi \xdif \tau \quad \forall \, \varphi \in \cD(\O), \label{eq:intro:init_cond_rho0}\\
& \lim \limits_{t \rightarrow 0^+} \int_\O (\rho u)(t) \varphi \xdif \tau = \int_\O q_0 \varphi \xdif \tau \quad \forall \, \varphi \in \cT_0(Q_b), \label{eq:intro:init_cond_q0}
\end{align} 
with 
\[ \cT_0(Q_b) = \left\{ 
\begin{aligned}
& \varphi \in \cD(\O) \; \mbox{such that there exists} \;  \{\varphi_i\}_{i=1}^m \subset \cS \\
& \mbox{satisfying for} \; i \in \{1, \cdots, m\}, \; \varphi = \varphi_i \; \mbox{in an open neighborhood of} \; \overline {\cB_{i, 0}} 
\end{aligned} 
\right\}. \]

\item The following energy inequality holds for almost every $t \in (0, T)$: 
\begin{equation}
\begin{aligned} 
& \int_\O \left(\frac{1}{2} \rho(t, \cdot) |u(t, .)|^2 + P(\rho(t, \cdot), \{\x1_{\cB_i(t)} \}_{i=1}^m) \right) \xdif y \\ 
& + \int_0^t \int_\O \left(2 \mu |\xD( u) - \frac13 \xdiv (u) \xI_3|^2 + \nu |\xdiv (u)|^2\right) \xdif y \xdif \tau + \sum_{i = 1}^m \frac{2\pi}{3} \kappa_i |R_i(t)|^2  \\ 
& \leq - \int_0^t \int_\O \rho  g \cdot  u \xdif y \xdif \tau +  \int_\O \left(\frac12 \frac{|q_0|^2}{\rho_0} \x1_{\rho_0 > 0} + P(\rho_0, \{\x1_{\cB_{i, 0}} \}_{i=1}^m) \right) \xdif y + \sum_{i = 1}^m \frac{2\pi}{3} \kappa_i |R_{i, 0}|^2, 
\end{aligned} 
\label{eq:intro:energy_inequality}
\end{equation}
where $P(\rho, \{\x1_{\cB_i} \}_{i=1}^m)$ is the potential energy given by  
\begin{equation*}
P(\rho, \{\x1_{\cB_i} \}_{i=1}^m) = \frac{a_f \rho^{\gamma_f}}{\gamma_f - 1} \left(1 - \sum_{i=1}^m \x1_{\cB_i} \right) + \sum_{i=1}^m \frac{a_i \rho^{\gamma_i}}{\gamma_i - 1} \x1_{\cB_i} . 
\end{equation*}  
\end{itemize} 
\label{def:intro:energy_bounded_solution}
\end{dfntn}
We now present the main result of this document. 
\begin{thrm}
Let $\O \subset \xR^3$ be a bounded smooth domain and for $i \in \{1, \cdots , m\}$,  $\cB_{i, 0} \subset \Omega$ a ball of  radius $R_{i, 0} \in (0, \infty)$ and center $ x_{i, 0} \in \xR^3$. Let $g \in \xLinfty((0, T) \times \O)$, assume there exists $\sigma > 0$ such that 
\begin{equation*}
\xdist (\cB_{i, 0}, \partial \O) > 2 \sigma, \quad \xdist(\cB_{i, 0}, \cB_{j, 0}) > 2 \sigma, \quad \forall \,  i, j \in \{1, \cdots, m\}, \quad \, i \neq j. 
\end{equation*}  
Then there exists $T > 0$ such that a bounded energy weak solution to \eqref{eq:intro:rhof_continuity_equation}-\eqref{eq:intro:rhofuf_momentum_equation},  \eqref{eq:intro:bubble_translation_dynamic}-\eqref{eq:intro:bubble_expansion_dynamic},  \eqref{eq:intro:noslip_itf_cdt}-\eqref{eq:intro:initial_data} in the sense of Definition~\ref{def:intro:energy_bounded_solution} exists on $[0, T]$. Moreover, such weak solution satisfies for all $t\in [0, T]$, 
\begin{equation}
R_i(t) > 0, \quad \xdist (\cB_i(t), \partial \O) \geq \sigma, \quad \xdist(\cB_i(t), \cB_j(t)) \geq \sigma, \quad \forall \, i, j \in \{1, \cdots, m\}, \quad i \neq j.
\label{eq:intro:distance_constrained_assumptions} 
\end{equation} 
\label{theo:intro:main_existence_result}
\end{thrm}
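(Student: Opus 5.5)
We will follow the penalization strategy developed for compressible fluid--rigid body interaction (Feireisl; Desjardins--Esteban; Hoffmann--Starovoitov / San Martín--Starovoitov--Tucsnak) and adapt it to the additional radial mode. We first regularize and penalize the problem on the whole domain $\O$. Given a velocity field $u$, we transport the bubbles by the flow restricted to $\cS$: on each $\cB_i$ we take the $\xLtwo(\cB_i)$-projection $P_i u$ of $u$ onto $\cS$ (centered at $x_i$), and solve the ODEs for $x_i,\xO_i,R_i$ given by \eqref{eq:intro:VbLb_definitions}--\eqref{eq:intro:ob_definition} with $(V_i,\omega_i,\Lambda_i)$ the coefficients of $P_i u$. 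This defines $\eta_i\in \xHone(0,T;\cM)$ as long as $R_i$ stays away from $0$.

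We then consider the approximate problem with several ingredients. The viscosity is $\mu_\varepsilon=\mu_f\x1_{\cF}+\varepsilon^{-1}\x1_{\cup\cB_i}$, where the large shear viscosity inside the bubbles penalizes deviations from $\cS$; we do \emph{not} penalize $\xdiv$, so the radial mode survives. The pressure is $p(\rho,\{\x1_{\cB_i}\})$, with the additional artificial pressure $\delta\rho^\beta$, $\beta$ large. The continuity equation carries artificial viscosity $\epsilon\Delta\rho$. The characteristic functions are smoothed, and the surface-tension term is $\tilde\kappa_b\xdiv\varphi$. We solve this problem by a Faedo--Galerkin scheme combined with a Schauder fixed point on the map $u\mapsto(\eta_i)\mapsto(\rho,u)$, exactly as in Feireisl--Novotný--Petzeltová. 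The dependence of the pressure law on $\x1_{\cB_i}$ is harmless because $\rho$ is transported, hence constant in space on $\cB_i$ at the limit.

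Next we derive uniform bounds. The energy inequality \eqref{eq:intro:energy_inequality}, with the $\varepsilon^{-1}\|\xD(u)-\frac13\xdiv u\,\xI_3\|^2_{\xLtwo(\cB_i)}$ penalization, gives $u\in\xLtwo(\xHone)$ and $\rho\in\xLinfty(\xLn{\gamma_f})$. For the bubbles it gives $\frac{2\pi}{3}\kappa_iR_i^2$ bounded, so $R_i$ stays bounded above, and the energy bound on $\int_{\cB_i}\frac{a_i\rho_i^{\gamma_i}}{\gamma_i-1}$ with $\rho_i\propto R_i^{-3}$ gives $R_i^{3(1-\gamma_i)}$ bounded, so $R_i$ stays bounded from below. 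Since $\dot x_i,\dot R_i,\omega_i$ are controlled in $\xLtwo(0,T)$ by $\|u\|_{\xLtwo(\xHone)}$, we get a uniform time $T$, depending only on the initial energy and on $\sigma$, on which the distance constraints \eqref{eq:intro:distance_constrained_assumptions} hold uniformly in all parameters. Standard Bogovskii-type estimates on $\O$, with test functions compactly supported away from the bubbles or adapted as in Feireisl's rigid-body paper, give improved integrability $\rho^{s(\gamma_f)}$ in the fluid region, which fixes the exponents in \eqref{eq:intro:b_cdt_renormalized_solution}.

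Then we pass to the limits successively, $N\to\infty$, then $\epsilon\to0$, then $\delta\to0$ together with $\varepsilon\to0$. In the last limit the penalization yields $\xD(u)-\frac13\xdiv u\,\xI_3=0$ in $\cB_i$, hence $u\in\cS$ there, by the characterization of conformal Killing fields in $\xR^3$ with $\xHone$ regularity. Compactness of the bubble motions follows from Arzelà--Ascoli on $\eta_i$. The test functions in $\cT(Q_b)$ depend on the solution, so, as in Feireisl's work, we build approximate test functions $\varphi_\varepsilon\in\cT(Q_b^\varepsilon)$ converging strongly to $\varphi$.

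We expect the main obstacle to be the convergence of the nonlinear terms $\rho u\otimes u$ and of the pressure near and inside the bubbles, where the new quantities $K_i\Lambda_i$ and $\xJ_i\omega_i$ carry time-dependent factors $R_i^2$. Our first approach is to show strong convergence of $\rho u$ in the bubble region through compactness of the finite-dimensional coefficients $(V_i,\omega_i,\Lambda_i)$: we would derive an $\xHone$ (or BV) bound in time from the weak momentum equation tested with $\varphi\in\cT$, which yields strong convergence of $R_i^2\Lambda_i$ and $R_i^2\omega_i$, and then of $\Lambda_i$ and $\omega_i$ since $R_i$ is bounded below. The pressure requires strong convergence of $\rho$. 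In the bubbles this follows because $\rho$ is a function of $R_i$ only. In the fluid we use the effective viscous flux identity, localized away from $\partial\cB_i$ because $\mu$ jumps there, together with renormalization, following Lions--Feireisl. This localization near moving interfaces is the most delicate step, and we would handle it by cut-offs in a $\sigma/2$-neighbourhood of the bubbles, using the uniform separation.
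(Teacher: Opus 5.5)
\textbf{The gap: your penalization does not force the bubble velocity into $\cS$.} You penalize only the deviatoric part $\xD(u)-\frac13\xdiv(u)\xI_3$ inside $\cB_i$, and then claim that its vanishing implies $u\in\cS$ on $\cB_i$. In $\xR^3$ this is false. On a ball, the $\xHone$ solutions of $\xD(u)=\frac13\xdiv(u)\xI_3$ form the $10$-dimensional algebra of conformal Killing fields. Besides $\cS$, it contains the three special conformal fields $w_b(x)=2(b\cdot y)y-|y|^2b$, with $y=x-x_i$ and $b\in\xR^3$; one checks $\xD(w_b)=2(b\cdot y)\xI_3=\frac13\xdiv(w_b)\xI_3$.

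These modes are invisible to the shear penalization, and nothing in your scheme forces them to vanish: they are only damped through $\nu_b|\xdiv u|^2$. Your limit velocity therefore need not satisfy \eqref{eq:intro:u_compatibility}. Moreover, $\xdiv w_b=6\,b\cdot y$ is not constant in space, so the density does not stay uniform in the bubble and \eqref{eq:intro:rho_compatibility} is lost as well.

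This is exactly why the paper uses an $\xLtwo$-type penalization. It adds $n\int_0^T\int_\O\chi_n(u_n-\Pi_nu_n)\cdot(\varphi-\Pi_n\varphi)$, where $\Pi_n$ is the $\xLtwo(\cB_n)$-orthogonal projection onto $\cS$. The energy then gives $\sqrt{\chi_n}(u_n-\Pi_nu_n)\to0$, hence $\x1_{\cB_\varepsilon}(u_\varepsilon-\Pi_\varepsilon u_\varepsilon)=0$ in the limit. The shear viscosity $n\chi_n$ is kept only as a redundant extra. The remark following Proposition~\ref{prop:appx_sol:existence_n_solution} states explicitly that the deviatoric constraint alone is insufficient and that an additional mode would have to be penalized. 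You need one of these two fixes.

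Two further points need repair even after that.

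\emph{Uniform bounds.} You use the full energy inequality \eqref{eq:intro:energy_inequality} at the approximate levels. There the density is not uniform in $\cB_i$ and the bubble is not advected by $u$. Differentiating $\int_\O P(\rho,\{\x1_{\cB_i}\})$ then produces interface terms (the jump of potential energy across $\partial\cB_i$ times the bubble's normal velocity) that the energy does not control. Your lower bound on $R_i$ via $R_i^{3(1-\gamma_i)}$ is likewise unavailable there, because it requires a uniform bubble density. The paper keeps only $\delta\rho^\beta/(\beta-1)$ in the approximate energy. It treats $a_f(1-\chi)\rho^{\gamma_f}\xdiv u$ and $a_b\chi\rho^{\gamma_b}\xdiv u$ as sources, absorbed using $\beta\geq\max\{2\gamma_f,2\gamma_b\}$, Young and Gronwall. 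It gets $R\geq R_0/2$ and the distance bounds from short-time bounds on $\|\dot R\|_{\xLone}$ and $\|\dot x\|_{\xLone}$.

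\emph{Order of limits.} You remove the penalization together with $\delta$, after the artificial diffusion is already gone. Inside the bubbles the approximating densities are then neither compact by parabolic regularity nor driven by a spatially constant divergence. The standard effective-flux argument also does not apply there, since the bubble viscosity blows up. So ``$\rho$ is a function of $R_i$ only'' holds for the limit but is not available for the sequence. The paper instead removes the penalization first, at fixed $\varepsilon$, where $\rho_n$ converges strongly thanks to $\varepsilon\Delta\rho$. In the next limit, $\xdiv u_\varepsilon=\Lambda_\varepsilon(t)$ on compact subsets of the bubble region. There $\overline{\rho\xdiv u}=\rho\xdiv u$ follows from $\rho_\varepsilon\to\rho_\delta$ in $\xCzero([0,T];\xLn{\beta}_{\xweak}(\O))$ paired with $\Lambda_\varepsilon$ converging weakly in $\xLtwo(0,T)$.
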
 

From now one, for the sake of simplicity, we prove Theorem~\ref{theo:intro:main_existence_result} in the case of a single bubble immerged in a linearly viscous compressible fluid $(m = 1)$. Due to the property \eqref{eq:intro:distance_constrained_assumptions} satisfied by the solution, the proof of Theorem~\ref{theo:intro:main_existence_result} can then be easily extended to the case of several bubbles $(m > 1)$.

\begin{rmrk}
Following the terminology of \cite{Feireisl2003ARM}, we shall say that the velocity $u$, respectively the density $\rho$, is compatible with the system $\{\overline {\cB}_i, \eta_i\}_{i =1}^m$, if it satisfies the condition \eqref{eq:intro:u_compatibility}, respectively \eqref{eq:intro:rho_compatibility}, of Definition~\ref{def:intro:energy_bounded_solution}.
\label{remark:intro:compatibility_definition}
\end{rmrk}

\subsection{State of the art and discussion}
The study of fluid-bubble interaction systems is a classical area of research in fluid mechanics. As discussed in the first part of the paper, the model we study is based on two prior works \cite{Hillairet2023MMN,HillairetII2023MMN} dealing with the derivation of averaged models for multiphase bubbly flows. More specifically, we aim to establish the existence of weak solutions for the microscopic model derived in \cite{Hillairet2023MMN}. In this model, the bubble dynamics \eqref{eq:intro:bubble_translation_dynamic}-\eqref{eq:intro:bubble_expansion_dynamic} are derived considering a two-phase compressible Navier--Stokes flow and assuming an infinite shear viscosity in the bubble phase. A common alternative approach to this framework is to the couple incompressible Navier-Stokes equations for the fluid and compressible Euler equations for the bubble, imposing continuity of the stress at the interface. In this formulation, the pressure inside the bubble is usually assumed homogeneous and, in contrast with the model considered here, both viscosity and inertia effects of the internal fluid are neglected. The resulting free-surface relations read
\begin{equation}
\left\{
\begin{aligned} 
&p_i(t) = c_i(t) |\cB_i(t)|^{-\gamma_b}, \quad t > 0, \\ 
&\xT(u_f, p_f) n = - p_i(t) n , \quad t  > 0, \quad x \in \partial \cB_i(t), \\ 
& \partial \cB_i(t) = \eta_f[t](\partial \cB_{i, 0}), \quad t > 0, 
\end{aligned}
\right.
\label{eq:state_art:free_surfarce_condition}
\end{equation} 
where $\eta_f$ denotes the fluid flow map. Notably, contrary to our model, these relations allow tangential discontinuity of the velocity at the interface. Under radial symmetry of the initial data, an important subclass of solutions is described by the Rayleigh–Plesset equations \cite{Rayleigh1917LED, Plesset1949JAM}. This model has been extensively used and analyzed in bubble dynamics, both theoretically and numerically, see \cite{Plesset1977ARF} for further details. Variants of this model also exist, incorporating effects such as thermal evolution or modifications of the gas law \cite{Biro2000JMA}. The existence of weak solutions for a relaxed version of the interface conditions \eqref{eq:state_art:free_surfarce_condition} and spherical bubbles, has been proven in \cite{Burtea2025ARX}. It should be noted that, although the relations \eqref{eq:state_art:free_surfarce_condition} do not impose any geometric constraint on the bubble shape, most of the available results deal with spherical bubbles.

Because of the no-slip conditions at the fluid-bubble interface and the description of the bubble dynamics by a finite set of ODEs, the fluid–bubble interaction problem we study in this paper shares many features with fluid–solid systems. More specifically, we can consider the bubble as an immersed structure with a velocity constrained to a finite number of modes. However, unlike fluid–solid systems, the bubble kinematics include an additional expansion-compression mode associated with variations in the bubble volume. From this perspective, our system is also related to the study of elastic structures, particularly when deformation is constrained to a finite number of modes.

A vast body of literature addresses the existence of weak and strong solutions for the interaction between rigid bodies and a viscous fluid governed by Navier-Stokes equations, for the most part in the incompressible setting \cite{Judakov1974DSS,Serre1987JJA,Hoffmann1996FFM,Desjardins1999ARM,
Desjardins1999CPD,Grandmont2000MMN,Hoffmann2000DC,SanMartin2002ARM,
Feireisl2003NEE,Takahashi2003ADE,Takahashi2004JMF,Gerard‐Varet2014CPA,
chemetov2019JMP} but also in the compressible one \cite{Desjardins1999CPD,Feireisl2003ARM,Guerrero2009AIH,Hieber2015EEC,
Haak2019MN,Roy2020JEE,Kreml2020JDE,Necasova2022JDE}.  

Regarding weak solutions, for a finite number of rigid bodies in a viscous incompressible fluid, their existence in dimension $2$ and $3$, until collision, was first established in \cite{Desjardins1999CPD}, including an additional assumption of sufficiently small initial data in 3D. These results were then generalized to both incompressible and compressible settings in 2D and 3D, assuming this time only the absence of collision, in \cite{Desjardins1999ARM}. In the incompressible case, the previous results were extended for global in time solutions (including potential collisions) in \cite{SanMartin2002ARM} for 2D and \cite{Feireisl2003NEE} for 3D. In the compressible framework, results allowing collisions in 2D and 3D were given in \cite{Feireisl2003ARM}. 

Analogous results for the interaction between a viscous fluid and an elastic structure exist both in the incompressible \cite{Boulakia2006JMF} and compressible framework \cite{Boulakia2005JMP}. The case of an elastic structure described by a finite number of modes have also been treated in the incompressible framework, in \cite {Desjardins2001RMC} for the existence of weak solutions and \cite {Boulakia2012IFB} for strong solutions. 

Our approach to proving the existence of weak solutions for the fluid-bubble system is mostly inspired by techniques from the theory of rigid bodies \cite{SanMartin2002ARM, Feireisl2003ARM}, where a penalization method is employed by replacing the solid with a fluid of very high viscosity. However, unlike the $\xHone$-type penalization used in the aforementioned works, we adopt an $\xLtwo$-type penalization, by penalizing the fluid velocity  within the bubble domain. Similar penalization strategies for rigid body motion have been considered as a numerical approach to deal with fluid-structure interaction problems \cite{Bost2010JNA} or to take account of slip at the fluid-solid interface \cite{Gerard‐Varet2014CPA, Necasova2022JDE}.

The idea of the proof is to introduce an approximate scheme which combines the theory of compressible fluids introduced by P.L Lions \cite{Lions1996MTF} and Feireisl \cite{feireisl2004OUP} to get the strong convergence of the density (renormalized continuity equations, effective viscous flux, artificial pressure) together with an additional penalization term in the bubble domain to approximatively enforce the constraint on the fluid velocity. 

The main novelty of our approach with respect to the rigid-body framework is that the velocity field associated to the motion of the bubbles is not divergence-free. As a result, the volume of the bubble is not constant in time and our analysis must address not only the possibility of collisions, either between bubbles or with the external boundary, but also the potential collapse of the bubbles. In particular, the equations for the kinematics and the dynamics of the bubbles are only defined as long as their radius is strictly positive. Another consequence of the variation of the volume of the bubbles is that contrary to the rigid body framework, the analysis of the non-linear terms,  the advection term and the pressure term, needs to be performed in the whole domain and not only in the fluid part.

The rest of the paper is organized as follows. We introduce three levels of approximation schemes in Section~\ref{sec:approximate_solutions}. In Section~\ref{sec:nonlinear_transport_equation}, we describe some results on the transport equation, which are needed in all the levels of approximation. Section~\ref{subsec:existence_proof_faedo-glrk_approximation} and Section~\ref{subsec:cvg_faedo-glrk_approximation} are dedicated to the construction and convergence analysis of a Faedo-Galerkin scheme associated to a finite dimensional approximation level. We discuss the limiting system associated to the high penalization term in Section~\ref{subsec:cvg_high_penalization_term_momentum_equation} and vanishing viscosity in Section~\ref{subsec:vnsh_dspt_continuity_equation}. Section~\ref{sec:proof_main_result} is devoted to the main part: the existence of weak solutions for the initial fluid-bubble system. 


\section{Approximate solutions} \label{sec:approximate_solutions}
In this section, we state the existence results for the different levels of the approximation scheme. We begin with the $\delta$-level approximation. This step consists of adding an artificial pressure so that   
\begin{equation}
p_\delta = p_\delta(\rho_\delta, \x1_{\cB_\delta}) = a_f \rho_\delta^{\gamma_f} \left(1 - \x1_{\cB_\delta} \right)  + a_b \rho_\delta^{\gamma_b} \x1_{\cB_\delta} + \delta \rho_\delta^\beta
\label{eq:appx_sol:pd_definition}
\end{equation} 
with
\begin{equation}
\delta > 0, \quad \beta \geq \max \{8, 2\gamma_f, 2\gamma_b\}.
\label{eq:appx_sol:delta_beta_conditions}
\end{equation} 
We also introduce the corresponding potential energy:
\begin{equation*}
P_\delta = P_\delta(\rho_\delta, \x1_{\cB_\delta}) = \frac{a_f \rho_\delta^{\gamma_f}}{\gamma_f - 1}  \left(1 - \x1_{\cB_\delta}\right) + \frac{a_b \rho_\delta^{\gamma_b}}{\gamma_b - 1} \x1_{\cB_\delta} + \frac{\delta \rho_\delta^\beta}{\beta - 1}.
\end{equation*} 
The $\delta$-level approximation problem reads as follows: find a triplet $(\cB_\delta, \rho_\delta,  u_\delta)$ such that 
\begin{itemize}[leftmargin=0pt]
\item For all $t \in [0, T]$,  $\cB_\delta(t)$ is a ball of radius $R_\delta(t) \in (0, \infty)$ and center $x_\delta(t) \in \xR^3$. 
\sloppy
\item Their exists $\eta_\delta \in \xHone([0, T]; \cM)$ such that the velocity $u_\delta \in \xLtwo(0, T; \xHone(\O))$ and the density ${\rho_\delta \in \xLinfty(0, T; \xLn{\gamma_f}(\O))}$ are compatible with the system $\{\overline {\cB_\delta}, \eta_\delta\}$ in the sense of Remark~\ref{remark:intro:compatibility_definition}.
\item The pair $(\rho_\delta, u_\delta)$ satisfies the continuity equation \eqref{eq:intro:rho_continuity_equation_weak} for all $\varphi \in \cD((0, T) \times \xR^3)$ provided that it is extended by $(0, 0)$ in $(\xR^3 \setminus \O) \times (\xR^3 \setminus \O)$.
\item Let
\begin{equation}
\overline {Q_\delta} = \{(t, x) \mid t \in [0, T], \; x \in \overline{\cB_\delta(t)}\}.
\label{eq:appx_sol:Qd_definition}
\end{equation}
For every test function $\varphi \in \cT(Q_\delta)$, 
the following weak form of the momentum equation holds:
\begin{equation}
\begin{aligned} 
&- \int_0^T \int_\O \rho_\delta \left(u_\delta \cdot \partial_t \varphi + u_\delta \otimes u_\delta : \xD (\varphi) \right) \xdif y \xdif \tau \\ 
&+ \int_0^T \int_\O 2 \mu_\delta \left(\xD(u_\delta) - \frac13 \xdiv (u_\delta) \xI_3 \right):\left(\xD (\varphi) - \frac13 \xdiv (\varphi) \xI_3\right) \mathrm  {d} y \mathrm  {d}  \tau \\
& + \int_0^T \int_\O (\nu_\delta \xdiv (u_\delta)  - p_\delta(\rho_\delta, \x1_{\cB_\delta})) \xdiv (\varphi) \mathrm  {d} y \mathrm  {d}  \tau = - \int_0^T \int_\O \rho_\delta g \cdot \varphi \xdif y \xdif \tau + \int_0^T \int_{\O} \x1_{\cB_\delta} \frac{\kappa_b}{R_\delta} \xdiv (\varphi) \xdif y \xdif \tau, 
\end{aligned}
\label{eq:appx_sol:rhodud_momentum_equation}
\end{equation} 
where the viscosity coefficients are given by
\begin{equation}
\mu_\delta = (1-\x1_{B_\delta}) \mu_f, \quad \nu_\delta = (1-\x1_{B_\delta}) \nu_f + \x1_{B_\delta} \nu_b.
\label{eq:appx_sol:mud_nud_definition}
\end{equation}
 
\item The initial conditions are satisfied in the sense of \eqref{eq:intro:init_cond_rho0}-\eqref{eq:intro:init_cond_q0}. 
\end{itemize}
A weak solution of problem \eqref{eq:intro:rhof_continuity_equation}-\eqref{eq:intro:noslip_itf_cdt}, in the sense of Definition~\ref{def:intro:energy_bounded_solution}, is obtained as a weak limit of solutions $(\cB_\delta, \rho_\delta, u_\delta)$ to the $\delta$-level approximation as $\delta \rightarrow 0$. The existence result of the approximate system reads: 
\begin{prpstn}
Let $\O \subset \xR^3$ be a bounded smooth domain and $\sigma, M \in (0,\infty).$ Let $\cB_0 \subset \O$ be  a ball of radius $R_0 \in (0, \infty)$ and center $x_0 \in \xR^3, \, \rho_{b, 0} \in (0, \infty), V_0, \, \omega_0 \in \xR^3$ and $\Lambda_0 \in \xR$. Assume that 
\begin{equation}
\begin{aligned} 
&\rho_0 \in \xLbeta(\O), \quad \rho_0 \geq 0 \; \mbox{a.e. in} \; \O, \quad \rho_0 = \rho_{b, 0} \; \mbox{a.e. in} \;  \cB_0, \\
&q_0 \in \xLn{\frac{2\beta}{\beta + 1}}(\O), \quad q_0 \x1_{\{\rho_0 = 0\}} = 0 \; \mbox{a.e. in} \; \O, \quad \frac{|q_0 |^2}{\rho_0} \x1_{\rho_0 > 0} \in \xLone(\O), \\
& u_0 = V_0 + \omega_0 \times (x - x_0) + \frac{\Lambda_0}{3}(x - x_0) \; \mbox{a.e. in} \; \cB_0. 
\end{aligned} 
\label{eq:appx_sol:init_cdt_delta_appx}
\end{equation}   
Let $g \in \xLinfty((0, T) \times \O)$ and $\delta \in (0, \infty)$. Assume that 
\begin{equation*}
\xdist (\cB_0, \partial \O) > 2 \sigma, 
\end{equation*} 
and the initial energy satisfies
\begin{equation}
E_\delta (\rho_0, q_0, \x1_{\cB_0}) = \int_\O \left( \frac12 \frac{|q_0|^2}{\rho_0} \x1_{\rho_0 > 0} + P_\delta(\rho_0, \x1_{\cB_0}) \right) \xdif y + \frac23 \pi \kappa_b |R_0|^2 \leq M.
\label{eq:appx_sol:Ed_definition}
\end{equation} 
Then there exists $T > 0$, depending only on $(\sigma,M)$, such that the $\delta$-level approximation problem admits a bounded energy weak solution $(\cB_\delta, \rho_\delta, u_\delta)$ on $[0,T]$, which satisfies, for almost every $t \in (0, T)$, 
\begin{multline}
E_\delta(\rho_\delta(t), (\rho_\delta u_\delta)(t), \x1_{\cB_\delta}(t)) + \int_0^t \int_\O \left(2\mu_\delta \left|\xD(u_\delta) - \frac13 \xdiv (u_\delta) \xI_3 \right|^2 + \nu_\delta |\xdiv (u_\delta)|^2\right) \xdif y \xdif \tau \\
\leq -\int_0^t \int_\O \rho_\delta g \cdot u_\delta \xdif y \xdif \tau + E_\delta (\rho_0, q_0, \x1_{\cB_0}).
\label{eq:appx_sol:energyd_estimate}
\end{multline}
Moreover, 
\begin{equation*}
R_\delta(t) \geq \frac{R_0}{2}, \quad \xdist(\cB_\delta(t), \partial \O) \geq \frac{ 3\sigma}{2}
\end{equation*} 
and provided that $\rho_\delta$ and $u_\delta$ are extended by $0$ in $\xR^3 \setminus \O$, they satisfy the renormalized continuity equation \eqref{eq:intro:rho_renormalized_continuity_equation} for all $b$ satisfying \eqref{eq:intro:b_cdt_renormalized_solution} with $\lambda_0 < 1, \; -1 < \lambda_1 \leq \frac{\beta}{2} - 1.$
\label{prop:appx_sol:existence_delta_level}
\end{prpstn}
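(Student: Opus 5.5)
The plan follows the multilevel approximation described in the introduction: a Faedo--Galerkin scheme with an $\xLtwo$-penalization of the velocity inside the bubble and an artificial viscosity in the continuity equation. For fixed $\delta$, we introduce parameters $\varepsilon>0$ (vanishing viscosity, $\partial_t\rho-\varepsilon\Delta\rho+\xdiv(\rho u)=0$ with Neumann condition), $n\in\xN$ (dimension of a Galerkin space $X_n\subset \xHone_0(\O)$), and $\alpha\to\infty$ (penalization). The penalization adds the term
\[
\alpha\int_0^T\int_\O \x1_{\cB}\,(u-P_{\cB}u)\cdot\varphi \,\xdif y\xdif\tau
\]
to the momentum equation. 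Here $P_{\cB}u$ is the $\xLtwo(\cB)$-projection of $u$ onto the finite-dimensional space $\cS$. The ball $\cB(t)$ is transported by the ODE $\dot x=V$, $\dot R=\Lambda R/3$, where $(V,\omega,\Lambda)$ are the coefficients of $P_{\cB}u$. The mass inside the ball is made compatible by solving the density equation with the true transport velocity; on $\cB$ this velocity is exactly affine, so $\rho$ stays uniform there, as in \eqref{eq:intro:rho_compatibility}. The viscosities $\mu,\nu$ and the pressure are switched by $\x1_{\cB}$, and the surface term $\kappa_b/R$ enters through $\xdiv\varphi$ on $\cB$.

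At the Galerkin level, existence on $[0,T_n]$ follows from a Schauder fixed point on $u\in \xCzero([0,T];X_n)$. Given $u$, we solve the parabolic density equation by classical theory, which gives $\rho$ bounded below on finite time. We solve the ODE for $(x,R,\xO)$, and then the linear Galerkin system. The energy estimate, obtained by testing with $u_n$ and using the renormalized density equation for $P_\delta$, is uniform in $n$. It includes the penalization dissipation $\alpha\|\x1_{\cB}(u-P_\cB u)\|^2_{\xLtwo}$, the surface energy $\frac{2\pi}{3}\kappa_b R^2$, and the gravity term handled by Gronwall. Because $|\dot x|+|\dot R|\lesssim \|u\|_{\xLtwo(\cB)}/R^{3/2}$, and the energy controls $\|u\|_{\xHone}$ in $\xLtwo_t$, we get $|x(t)-x_0|+|R(t)-R_0|\lesssim C(M,R_0)\sqrt{t}$ as long as $R\geq R_0/2$. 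A continuation argument therefore gives a time $T$ depending only on $(\sigma,M)$ (and $R_0$, which is controlled via $M$ and $\rho_{b,0}$ through the mass constraint) on which $R\ge R_0/2$ and $\xdist(\cB,\partial\O)\ge 3\sigma/2$. This bound is uniform in all parameters.

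We then pass to the limits successively: $n\to\infty$, then $\alpha\to\infty$, then $\varepsilon\to 0$. In each limit the centers and radii converge uniformly by Arzel\`a--Ascoli, since they are bounded in $\xHone(0,T)$, so $\x1_{\cB}$ converges strongly in every $\xLn{p}$. In the $\alpha\to\infty$ step the penalization bound yields $\x1_{\cB}(u-P_\cB u)\to0$, so the limit velocity is compatible, as in \eqref{eq:intro:u_compatibility}. Test functions in $\cT(Q_\delta)$ are handled by constructing approximating test functions that are rigid-plus-dilation near $\cB_\alpha(t)$, via a smooth transport of $\varphi$ along $\eta_\alpha\circ\eta^{-1}$. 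Strong convergence of $\rho u$ and of $\rho u\otimes u$ is obtained from time-derivative estimates tested only on functions compatible with the bubble, in the spirit of San Martin--Starovoitov--Tucsnak and Feireisl. This requires a compactness lemma that uses the spatial decomposition $\cB$ / $\cF$ together with the $\xLtwo$ closeness of $u$ to $\cS$ in $\cB$.

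The main obstacle is the strong convergence of the density in the limits $\varepsilon\to0$ (and also $\alpha\to\infty$). This goes through the effective viscous flux identity
\[
\overline{p_\delta(\rho)\rho}-(\nu+\tfrac43\mu)\overline{\rho\xdiv u}=\overline{p_\delta(\rho)}\,\rho-(\nu+\tfrac43\mu)\rho\xdiv u,
\]
but the viscosities and pressure law jump across $\partial\cB$. Moreover, since $\xdiv u\neq0$ in the bubble, this identity must hold in the whole domain, not only in the fluid. We plan to localize it with test functions $\psi\,\nabla\Delta^{-1}(\psi\rho)$, with $\psi$ supported either in the fluid region or compactly inside the moving ball, using the uniform separation of the interface. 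Inside the ball, $\rho$ is already spatially constant, and its strong convergence follows from the convergence of $R$ via \eqref{eq:intro:rho_compatibility}. For a neighborhood of the interface, we use the artificial pressure $\beta\ge8$, which gives higher integrability of $\rho$, to show the contribution of a thin layer vanishes. The conclusion then follows from the renormalized equation with $b(\rho)=\rho\log\rho$ and the monotonicity of $p_\delta$ in $\rho$ on each phase, giving $\overline{\rho\log\rho}=\rho\log\rho$. The energy inequality \eqref{eq:appx_sol:energyd_estimate} and the renormalized equation for $\lambda_1\le\beta/2-1$ then follow by weak lower semicontinuity and the DiPerna--Lions theory, since $\rho\in\xLn{2}$.
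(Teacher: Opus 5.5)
Your architecture matches the paper's: Galerkin, $\xLtwo$-penalization $\x1_{\cB}(u-P_{\cB}u)$ (identical to the paper's term, since its $\Pi_n$ is exactly the $\xLtwo(\cB_n)$-orthogonal projection onto $\cS$), artificial viscosity, limits in the order Galerkin, penalization, $\varepsilon\to0$, and $\sqrt t$ control of $x,R$ via $|\dot x|+|\dot R|\lesssim R^{-3/2}\|u\|_{\xLtwo}$. The gap is in the step you call the main obstacle, compactness of $\rho_\varepsilon$ inside the bubble as $\varepsilon\to0$. Both mechanisms you propose fail there.

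First, $\rho_\varepsilon$ is not spatially constant in $\cB_\varepsilon(t)$. The term $\varepsilon\Delta\rho$ diffuses mass across $\partial\cB_\varepsilon$, and at the Galerkin and penalized levels $u$ is not even affine in $\cB$. So ``$\rho$ stays uniform there'' is false for every $\varepsilon>0$. Uniformity as in \eqref{eq:intro:rho_compatibility} is only recovered for the limit $\rho_\delta$, through Proposition~\ref{lem:smlrt-prop:density_bubble} once diffusion is gone. Deducing strong convergence in the ball from convergence of $R$ is an argument the paper uses only in the final limit $\delta\to0$.

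Second, you cannot test with $\psi\nabla\Delta^{-1}(\psi\rho)$, $\psi$ supported inside the ball. At the $\varepsilon$-level the momentum equation holds only for $\varphi\in\cT(Q_\varepsilon)$, i.e.\ fields equal to an element of $\cS$ near the bubble. So the weak formulation only sees the spatial mean of the bubble pressure and gives no effective viscous flux identity there.

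The missing idea is that in the bubble you only need $\overline{\rho\,\xdiv u}\geq\rho_\delta\,\xdiv u_\delta$, and there it holds with equality for a simple reason. After the penalization limit $u_\varepsilon$ is compatible, so $\xdiv u_\varepsilon=\Lambda_\varepsilon(t)$ is constant in space on $\cB_\varepsilon(t)$. On a compact subset of $Q_\delta$ (contained in $Q_\varepsilon$ for small $\varepsilon$), $\int_0^T\int_\O\rho_\varepsilon\,\xdiv u_\varepsilon\,\psi\,\xdif y\,\xdif t=\int_0^T\Lambda_\varepsilon(t)\big(\int_\O\rho_\varepsilon(t)\psi(t)\,\xdif y\big)\xdif t$ converges. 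Indeed, the inner function converges uniformly in $t$ because $\rho_\varepsilon\to\rho_\delta$ in $\xCzero([0,T];\xLn{\beta}_{\xweak}(\O))$, while $\Lambda_\varepsilon\rightharpoonup\Lambda_\delta$ in $\xLtwo(0,T)$. Combine this with the inequality on $Q_{f,\delta}$ from the fluid-only flux identity. Since the interface is Lebesgue-null and both sides are $\xLone$ functions, the inequality holds a.e. The global $\rho\log\rho$ argument then gives strong convergence everywhere, bubble included, and no thin-layer estimate is needed.

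Secondary points:
\begin{itemize}
\item At the penalization limit (fixed $\varepsilon$) there is no density obstacle at all. The $\varepsilon$-dissipation bounds $\nabla\rho$ in $\xLtwo$ uniformly, which gives strong convergence by compactness.
\item Your approximate energy ``using the renormalized equation for $P_\delta$'' and containing $\frac{2\pi}{3}\kappa_bR^2$ cannot be derived as stated. $\chi$ is transported by $\tilde\Pi u\neq u$ and $\rho$ diffuses, so the phase-dependent parts of $P_\delta(\rho,\chi)$ produce interface terms of no sign. Also, $\frac{\kappa_b}{R}\int_{\cB}\xdiv u$ is a time derivative of a function of $R$ only when $u\in\cS$ on $\cB$. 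The paper keeps only $\delta\rho^\beta/(\beta-1)$ in the approximate energies. It treats the $a_f$, $a_b$ and $\kappa_b$ terms as forcing, absorbed by Young's inequality using $\beta\geq2\max\{\gamma_f,\gamma_b\}$ and $R\geq R_0/2$; this repair is easy.
\item $M$ bounds $R_0$ only from above, while your existence time needs a lower bound on $R_0$. So $T$ depends on $R_0$, as does the paper's $T_1$.
\item Transporting test functions along $\eta_\alpha\circ\eta^{-1}$ is unnecessary. If $\varphi=\varphi_b$ on a $\sigma_0$-neighbourhood of the limit ball, uniform convergence of centres and radii puts the approximate balls inside it for large indices. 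Then $\varphi$ itself is admissible, and the penalization term vanishes against it.
\end{itemize}
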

To prove Proposition~\ref{prop:appx_sol:existence_delta_level}, we introduce the $\varepsilon$-level approximation problem, in which a dissipative term is added to the continuity equation:
\begin{equation}
\partial_t \rho_\varepsilon + \xdiv (\rho_\varepsilon u_\varepsilon) = \varepsilon \Delta \rho_\varepsilon \; \mbox{in} \; \cD^\prime((0, T) \times \Omega),  \quad \nabla \rho_\varepsilon \cdot n = 0 \; \mbox {on} \; \partial \O, 
\label{eq:appx_sol:rhoe_continuity_equation}
\end{equation} 
with $\varepsilon > 0$. To preserve the energy estimates, adding the dissipative term in \eqref{eq:appx_sol:rhoe_continuity_equation} also requires including
\begin{equation*}
\varepsilon \int_0^T \int_\O (\nabla u_\varepsilon \nabla \rho_\varepsilon) \varphi \xdif y \xdif \tau
\end{equation*} 
in the momentum equation. The $\varepsilon$-level approximation reads as follows: find a triplet $(\cB_\varepsilon, \rho_\varepsilon, u_\varepsilon)$ such that
\begin{itemize}[leftmargin=0pt]
\item For all $t \in [0, T]$, $\cB_\varepsilon(t)$ is a ball of radius $R_\varepsilon(t) \in (0, \infty)$ and center $x_\varepsilon (t) \in \xR^3$. 
\item There exists $\eta_\varepsilon \in \xHone([0, T]; \cM)$ such that the velocity field $u_\varepsilon \in  \xLtwo(0, T; \xHone(\O))$ and the density $\rho_\varepsilon \in \xLinfty(0, T; \xLbeta(\O))\cap \xLtwo(0, T; \xHone(\O))$ are compatible with the system $\{\overline {\cB_\varepsilon}, \eta_\varepsilon\}$ in the sense of Remark~\ref{remark:intro:compatibility_definition}.
\item The pair $(\rho_\varepsilon, u_\varepsilon)$ satisfies the regularized continuity equation \eqref{eq:appx_sol:rhoe_continuity_equation}.
\item Let 
\begin{equation}
\overline{Q_\varepsilon} = \{(t, x) \; | \; t \in [0, T], \; x \in \overline{\cB_\varepsilon(t)}\}.
\label{eq:appx_sol:Qe_def}
\end{equation}
For every test function $\varphi \in \cT(Q_\varepsilon)$, the following weak form of the momentum equation holds:
\begin{equation}
\begin{aligned}
&-\int_0^T \int_\O \rho_\varepsilon \left(u_\varepsilon \cdot \partial_t \varphi + u_\varepsilon \otimes u_\varepsilon : \xD (\varphi)\right) \xdif y \xdif \tau \\ 
&+ \int_0^T \int_\O 2 \mu_\varepsilon \left(\xD(u_\varepsilon) - \frac13 \xdiv (u_\varepsilon) \xI_3 \right):\left( \xD (\varphi) - \frac13 \xdiv (\varphi) \xI_3\right) \xdif y \xdif \tau \\
&+ \int_0^T \int_\O \left(\nu_\varepsilon \xdiv (u_\varepsilon) - p_\delta(\rho_\varepsilon, \x1_{\cB_\varepsilon})\right) \xdiv (\varphi) \xdif y \xdif \tau + \varepsilon \int_0^T \int_\O \left( \nabla u_\varepsilon \nabla \rho_\varepsilon \right) \cdot \varphi \xdif y \xdif \tau \\
= & - \int_0^T \int_\O \rho_\varepsilon g \cdot \varphi \xdif y \xdif \tau + \int_0^T \int_\O \x1_{\cB_\varepsilon} \frac{\kappa_b}{R_\varepsilon} \xdiv (\varphi) \xdif y \xdif \tau, 
\end{aligned}
\label{eq:appx_sol:rhoeue_momentum_equation}
\end{equation} 
where the viscosity coefficients are given by 
\begin{equation}
\mu_\varepsilon = (1 - \x1_{\cB_\varepsilon}) \mu_f , \quad \nu_\varepsilon = (1 -  \x1_{\cB_\varepsilon}) \nu_f +  \x1_{\cB_\varepsilon} \nu_b.  
\label{eq:appx_sol:mue_nue_definition}
\end{equation}
\item Initial data are satisfied in the sense of \eqref{eq:intro:init_cond_rho0}-\eqref{eq:intro:init_cond_q0}.
\end{itemize}
A weak solution of the $\delta$-level approximation, in the sense of Proposition~\ref{prop:appx_sol:existence_delta_level}, is obtained as a weak limit of solutions $(\cB_\varepsilon, \rho_\varepsilon, u_\varepsilon)$ to the $\varepsilon$-level approximation as $\varepsilon \rightarrow 0$. The existence result of the approximate system reads: 
\begin{prpstn} 
Let $\O \subset \xR^3$ be a bounded smooth domain and $\sigma, \, M, \, \overline {\rho}, \, \underline {\rho} \in (0, \infty).$ Let $\cB_0 \subset \O$ be a ball of radius $R_0 \in (0, \infty)$ and center $x_0 \in \xR^3$, $V_0, \omega_0 \in \xR^3$ and $\Lambda_0 \in \xR$. Assume that 
\begin{align*}
&\rho_0 \in \xWn{1, \infty} (\O), \quad 0 < \underline  {\rho} \leq \rho_0 \leq \overline {\rho}, \quad  q_0 \in \xLtwo(\O), \\
& u_0 = V_0 + \omega_0 \times (x - x_0) + \frac{\Lambda_0}{3}(x - x_0) \; \mbox{a.e. in} \; \cB_0, \quad q_0 = \rho_0 u_0 \; \mbox{a.e. in} \; \cB_0.
\end{align*} 
Let $g \in \xLinfty((0, T) \times \O)$ and $\delta, \varepsilon \in (0, \infty)$. Assume that 
\begin{equation*}
\xdist(\cB_0, \partial \O) > 2 \sigma, 
\end{equation*} 
and that the initial energy satisfies
\begin{equation}
\tilde {E}_\delta (\rho_0, q_0, \x1_{\mathcal {B}_0}) = \int_\O \left(\frac12 \frac{|q_0|^2}{\rho_0}  + \delta \frac{\rho_0^\beta}{\beta - 1} \right) \xdif y + \frac23 \kappa_b \pi |R_0|^2 \leq M.
\label{eq:appx_sol:Ee_definition}
\end{equation} 
Then there exists $T > 0$, depending only on $(\sigma, M, \underline{\rho}, \overline {\rho})$, such that the $\varepsilon$-level approximation problem admits a bounded energy weak solution $(\cB_\varepsilon, \rho_\varepsilon, u_\varepsilon)$, which satisfies, for almost every $t \in (0, T)$,   
\begin{multline}
\tilde {E}_\delta(\rho_\varepsilon(t), (\rho_\varepsilon u_\varepsilon)(t), \x1_{\mathcal {B}_\varepsilon(t)}) + \int_0^t \int_\O \left(2\mu_\varepsilon \left|\xD(u_\varepsilon) - \frac13 \xdiv (u_\varepsilon) \xI_3\right|^2 + \nu_\varepsilon |\xdiv (u_\varepsilon)|^2\right) \xdif y  \xdif \tau \\ 
+ \delta \varepsilon \beta \int_0^t \int_\O (\rho_\varepsilon)^{\beta-2} |\nabla \rho_\varepsilon|^2 \xdif y \xdif \tau \leq a_f \int_0^t \int_\O (1 - \x1_{\cB_\varepsilon}) \rho_\varepsilon^{\gamma_f} \xdiv (u_\varepsilon) \xdif y \xdif \tau \\ 
+ a_b \int_0^t \int_\O \x1_{\cB_\varepsilon} \rho_\varepsilon^{\gamma_b} \xdiv (u_\varepsilon) \xdif y \xdif \tau  - \int_0^t \int_\O \rho_\varepsilon g \cdot u_\varepsilon \xdif y \xdif \tau + \tilde {E}_\delta (\rho_0, q_0, \x1_{\mathcal {B}_0}).
\label{eq:appx_sol:energye_estimate} 
\end{multline}
Moreover, 
\begin{equation*}
R_\varepsilon (t) \geq \frac{R_0}{2}, \quad \xdist(\cB_\varepsilon(t), \partial \O) \geq 2 \sigma \quad \forall \, t \in [0, T], 
\end{equation*} 
and the density $\rho_\varepsilon$ satisfies
\begin{equation}
\partial_t \rho_\varepsilon, \; \Delta \rho_\varepsilon \in \xLn{\frac{5\beta - 3}{4 \beta}} ((0, T) \times \O), \quad \sqrt{\varepsilon}\|\nabla \rho_\varepsilon\|_{\xLtwo((0, T) \times \O)} + \|\rho_\varepsilon\|_{\xLn{\beta + 1}((0,T) \times \O)} \leq c, \label{eq:appx_sol:rhoe_impvd_regularity}
\end{equation} 
where $c$ is a positive constant uniformly independent of $\varepsilon$. 
\label{prop:appx_sol:existence_epsilon_level}
\end{prpstn}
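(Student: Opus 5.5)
We prove Proposition~\ref{prop:appx_sol:existence_epsilon_level} through one more approximation level, obtained by penalization and Faedo--Galerkin truncation. The velocity is no longer forced to be a bubble motion in $\cB_\varepsilon$. Instead we add to the momentum equation an $\xLtwo$-type penalization term
\[
n\int_0^T\int_\O \x1_{\cB_n}(u_n - P_{\cB_n} u_n)\cdot \varphi \xdif y \xdif \tau ,
\]
where $P_{\cB}$ is the $\xLtwo(\cB)$-orthogonal projection onto $\cS$. The bubble $\cB_n(t)=\eta_n[t](\cB_0)$ is transported by the ODE $\partial_t\eta_n = (P_{\cB_n}u_n)\circ \eta_n$. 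This ODE keeps $\eta_n\in\cM$, since the projected field lies in $\cS$ and generates affine maps of the form $a+R\xO x$.

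\textbf{Galerkin step.} For fixed $n$ we look for $u_N$ in a finite-dimensional space $X_N$ of smooth fields vanishing on $\partial\O$. The density solves the parabolic Neumann problem \eqref{eq:appx_sol:rhoe_continuity_equation} with the smooth drift $u_N$. The lower and upper bounds on $\rho_0$ give $\underline\rho\, e^{-\int\|\xdiv u_N\|_\infty}\le\rho_N\le\overline\rho\, e^{\int\|\xdiv u_N\|_\infty}$ by the maximum principle, and the parabolic $\xWn{1,\infty}$ regularity results of Section~\ref{sec:nonlinear_transport_equation} apply. The ODE for $(x_N,R_N,\xO_N)$ is Lipschitz as long as $R_N\ge R_0/2$. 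The Galerkin system is then a fixed point in $\xCzero([0,T];X_N)$, obtained by Schauder's theorem. Local existence is extended using the energy identity, derived by testing with $u_N$ itself and with $\delta\beta/(\beta-1)\,\rho^{\beta-1}$ in the continuity equation. The $\varepsilon\nabla u\nabla\rho$ term is exactly what cancels the commutator coming from $\varepsilon\Delta\rho$. This gives \eqref{eq:appx_sol:energye_estimate}, with $n\int\x1_{\cB}|u-P_{\cB}u|^2$ as an additional dissipation. The terms $a_f\rho^{\gamma_f}$ and $a_b\rho^{\gamma_b}$ are kept on the right-hand side, because the jump of $\x1_{\cB}$ prevents writing them as time derivatives. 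They are controlled by Gronwall through $\|\rho\|_{\xLbeta}$, since $\beta\ge 2\max(\gamma_f,\gamma_b)$ and $\|\xdiv u\|_{\xLtwo}$ is dissipated.

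\textbf{Uniform time $T$.} The energy bounds $\int_0^T|P_{\cB}u|^2$ in terms of $M,\underline\rho,\overline\rho$, because $\rho$ is bounded below on $\cB$ at the Galerkin level. In finite dimension, the coefficients of the projection are bounded by $\|u\|_{\xLtwo(\cB)}/R^{3/2}$, and in particular $|\Lambda|$ is of this size. Hence $|\dot x|+|\dot R|\le C(M)\, t^{1/2}$ while $R\ge R_0/2$, and a continuity argument yields a $T$ depending only on $(\sigma,M,\underline\rho,\overline\rho)$ with $R\ge R_0/2$ and $\xdist(\cB,\partial\O)\ge 2\sigma$. (To keep the distance at $2\sigma$ exactly one may need a slightly larger initial margin; otherwise I would settle for $3\sigma/2$, as in the $\delta$-level.)

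\textbf{Passage to the limit, $N\to\infty$ then $n\to\infty$.} The energy gives $u$ bounded in $\xLtwo(\xHone)$ on the fluid part; on the bubble part one needs a Korn-type inequality combined with the penalization control. $\rho$ is bounded in $\xLinfty(\xLbeta)$ with $\sqrt\varepsilon\nabla\rho$ in $\xLtwo$. Maximal parabolic regularity then gives \eqref{eq:appx_sol:rhoe_impvd_regularity} and strong compactness of $\rho$. The $\xLn{\beta+1}$ bound comes from testing with the Bogovskii operator applied to $\rho^{}-\langle\rho\rangle$; this is legitimate at the penalized level because the test function need not be rigid in $\cB$. The bubble motions $\eta$ are bounded in $\xHone([0,T];\cM)$, hence converge uniformly, so $\x1_{\cB_n}\to\x1_{\cB}$ in $\xLn{p}$.

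\textbf{Main obstacle.} The key difficulty is the limit $n\to\infty$ in the momentum equation tested with $\varphi\in\cT(Q_\varepsilon)$. The limit test space depends on the limit bubble, so we approximate $\varphi$ by $\varphi_n\in\cT(Q_{n})$ built from the same $\cS$-valued function near the moved ball, on which the penalization term vanishes. We must also pass to the limit in $\rho u\otimes u:\xD(\varphi)$ on the whole domain; unlike in the rigid case, $\xD(\varphi)\neq0$ inside the ball because of the radial mode. For this I would use Aubin--Lions in time for $\rho_n u_n$, tested against functions with bubble structure, together with the uniform convergence of $\eta_n$ and the fact that $u_n - P_{\cB_n}u_n\to0$ in $\xLtwo$. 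Strong convergence of $\rho$, coming from the $\varepsilon$-diffusion, handles the pressure term with its discontinuous coefficient. The energy inequality follows by weak lower semicontinuity.
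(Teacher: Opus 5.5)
Your overall architecture matches the paper's. You use a Faedo--Galerkin level, then an $\xLtwo$-penalization level in which the bubble is carried by the projected velocity, then the limit $n\to\infty$. There the convective term is handled by time-compactness of the momentum tested against bubble-shaped functions, as in the paper's lemmas on $\tilde V_n,\tilde\omega_n,\tilde\Lambda_n$. Your penalization even coincides with the paper's, since $\Pi_n$ in \eqref{eq:appx_sol:Pibn_definition} is exactly the $\xLtwo(\cB_n)$-orthogonal projection onto $\cS$.

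\textbf{Main gap: no shear viscosity inside the ball.} Your penalized problem keeps the $\varepsilon$-level viscosity $\mu_\varepsilon=(1-\x1_{\cB})\mu_f$. You then hope to recover $\nabla u$ in $\cB$ from ``a Korn-type inequality combined with the penalization control''. No such inequality exists. Take $w_k\in\cD(\cB)$ divergence-free with $\|w_k\|_{\xLtwo}=1$ and $\|\nabla w_k\|_{\xLtwo}\to\infty$. Along $u=w_k$:
the fluid deviatoric strain and $\xdiv(w_k)$ vanish;
the kinetic energy stays bounded;
$n\|w_k-P_{\cB}w_k\|^2_{\xLtwo(\cB)}\le n$ stays bounded;
yet the gradient blows up.

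Hence, for fixed $n$, your energy gives no bound on $u_N$ in $\xLtwo(0,T;\xHone(\O))$ uniform in $N$. Without it you cannot extract a weak $\xLtwo(\xHone)$ limit, cannot pass to the limit in $\varepsilon(\nabla u_N\nabla\rho_N)\cdot\varphi$, and cannot invoke Proposition~\ref{prop:smlrt-prop:weak_sequential_continuity}.

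The paper's missing ingredient is the bubble shear viscosity $\mu_n=(1-\chi_n)\mu_f+n\chi_n$ of \eqref{eq:appx_sol:mun_nun_definition}, with $n\ge\mu_f$. The deviatoric strain is then dissipated on all of $\O$, and for $u\in\xHone(\O)$ vanishing on $\partial\O$,
\begin{equation*}
\int_\O 2\Big|\xD(u)-\frac13\xdiv(u)\xI_3\Big|^2\xdif y=\int_\O |\nabla u|^2+\frac13|\xdiv(u)|^2\xdif y .
\end{equation*}
This gives $\mu_f\|\nabla u\|^2_{\xLtwo(\O)}$ uniformly in $N$ and $n$. The extra viscosity is harmless in the limit, because $\xD(\varphi)-\frac13\xdiv(\varphi)\xI_3=0$ near the bubble for $\varphi\in\cT(Q_\varepsilon)$.

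\textbf{Consequences elsewhere.} The same missing bound undermines your uniform existence time. You control $P_{\cB}u$ through the kinetic energy and a lower bound on $\rho$ ``at the Galerkin level''. That bound is $\rho_N\ge\underline\rho\exp(-\int_0^t\|\xdiv(u_N)\|_{\xLinfty})$ with $\|\xdiv(u_N)\|_{\xLinfty}\le c(N)\|u_N\|_{\xLtwo}$. So your $T$ depends on $N$, and no such lower bound survives $N\to\infty$ or $n\to\infty$.

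The paper instead combines Poincar\'e with the global gradient bound: $\|u_N\|^2_{\xLtwo((0,T)\times\O)}\le c_p^2K$, and
\begin{equation*}
\|\dot x_N\|_{\xLn{1}(0,T)}+\|\dot R_N\|_{\xLn{1}(0,T)}\lesssim\sqrt{T}\,\|u_N\|_{\xLtwo((0,T)\times\O)} .
\end{equation*}
This yields $T_1,T_2$ independent of $N$ and $n$. The strict margin $\xdist(\cB_0,\partial\O)>2\sigma$ then preserves $2\sigma$, so you need not settle for $3\sigma/2$.

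Finally, the $\xLn{\beta+1}$ bound you get from a Bogovskii test at the penalized level is not uniform in $n$. The energy only controls $n\int\chi_n|u_n-\Pi_nu_n|^2$, so the penalization term evaluated on that test function is $O(\sqrt n)$. The bound therefore degenerates as $n\to\infty$ and does not transfer to the $\varepsilon$-level.
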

To prove Proposition~\ref{prop:appx_sol:existence_epsilon_level}, we consider an additional level of approximation. In this $n$-level scheme, a penalization term is added to the momentum equation, which is designed to approximate the motion of the bubble. We seek a triplet $(\cB_n, \rho_n, u_n)$ satisfying 
\begin{itemize}[leftmargin=0pt]
\item For all $t \in [0, T]$, $\cB_n (t)$ is a ball of radius $R_n(t) \in (0, \infty)$ and center $x_n(t) \in \xR^3$ such that 
\begin{equation}
\chi_n = \x1_{\cB_n}
\label{eq:appx_sol:chibn_regularity}
\end{equation}
belongs to $\xCzero([0, T]; \xLn{p}(\xR^3))$ for all $1 \leq p < \infty$. 
\item The velocity field $u_n \in \xLtwo(0, T; \xHone(\O))$ and the density function $\rho_n \in \xLinfty(0, T; \xLbeta(\O)) \cap \xLtwo(0, T; \xHone(\O))$ satisfy the regularized continuity equation \eqref{eq:appx_sol:rhoe_continuity_equation}.
\item Let, for all $t \in [0, T]$, $\Pi_n(t) : \xLtwo(\O) \rightarrow \cS$ be defined for any $\varphi \in \xLtwo(\O)$ by
\begin{multline}
(\Pi_n(t) \varphi)(x) = \frac{3}{4 \pi R_n(t)^3} \int_\O \chi_n \varphi \xdif y + \frac{15}{8 \pi R_n(t)^5} \left(\int_\O \chi_n (y - x_n(t)) \times \varphi \xdif y \right) \times (x - x_n(t)), \\ 
+ \frac{5}{4 \pi R_n(t)^5} \left(\int_\O \chi_n (y - x_n(t)) \cdot \varphi \xdif y \right) (x - x_n(t)) \quad  \forall \, x \in \xR^3,
\label{eq:appx_sol:Pibn_definition}
\end{multline} 
with
\begin{equation} 
R_n = \left(\frac{3}{4\pi} \int_{\xR^3} \chi_n \xdif y \right)^\frac13, \quad x_n = \frac{3}{4\pi R_n^3} \left( \int_{\xR^3} \chi_n y \xdif y \right).
\label{eq:appx_sol:Rn_xn_definition} 
\end{equation} 
For every test function $ \varphi \in \cD((0, T) \times \O)$,  the following weak form of the momentum equation holds: 
\begin{equation}
\begin{aligned} 
& - \int_0^T \int_\O \rho_n \left(u_n \cdot \partial_t \varphi + u_n \otimes u_n : \xD (\varphi) \right) \xdif y \xdif \tau \\ 
& + \int_0^T \int_\O 2 \mu_n \left(\xD(u_n) - \frac13 \xdiv (u_n) \xI_3 \right):\left(\xD (\varphi) - \frac13 \xdiv (\varphi) \xI_3\right) \xdif y \xdif \tau \\
& + \int_0^T \int_\O (\nu_n \xdiv (u_n)  - p_\delta(\rho_n, \chi_n)) \xdiv (\varphi) \xdif y \xdif \tau + \varepsilon \int_0^T \int_\O \left(\nabla u_n \nabla \rho_n \right) \cdot \varphi \xdif y \xdif \tau \\ 
& + n \int_0^T \int_\O \chi_n (u_n - \Pi_n  u_n) ( \varphi -  \Pi_n  \varphi) \xdif y \xdif \tau  = - \int_0^T \int_\O \rho_n  g \cdot  \varphi \xdif y \xdif \tau + \int_0^T \int_\O \frac{\kappa_b}{R_n} \chi_n \xdiv (\varphi) \xdif y \xdif \tau,
\end{aligned}
\label{eq:appx_sol:rhonun_momentum_equation}
\end{equation} 
where the viscosity coefficients are given by 
\begin{equation}
\mu_n = (1-\chi_n) \mu_f + n \chi_n, \quad \nu_n = (1 - \chi_n) \nu_f + \chi_n \nu_b.  
\label{eq:appx_sol:mun_nun_definition}
\end{equation}
\item The color function $\chi_n$ satisfies  
\begin{equation}
\left\{
\begin{aligned}
&\partial_t \chi_n + \tilde {\Pi}_n u_n \cdot \nabla \chi_n = 0 \; \mbox{in} \; \cD^\prime ((0, T) \times \xR^3), \\
&\chi_n(0, x) = \x1_{\cB_0}(x)  \; \mbox {for a.e.} \; x \in \xR^3, 
\end{aligned}
\right. 
\label{eq:appx_sol:chibn_transport_equation}
\end{equation} 
where for $t\in [0, T]$, $\tilde {\Pi}_n(t) : \xLtwo(\O) \rightarrow \cS$ is defined for $\varphi \in \xLtwo(\O)$ by  
\begin{equation}
(\tilde {\Pi}_n(t) \varphi) (x) = \frac{3}{4 \pi 	R_n(t)^3} \int_\O \chi_n \varphi \xdif y + \frac{5}{4 \pi R_n(t)^5} \left(\int_\O \chi_n (y - x_n(t)) \cdot \varphi \xdif y \right) (x - x_n(t)) \quad \forall \, x \in \xR^3,   
\label{eq:appx_sol:tPibn_definition}
\end{equation}
and $x_n, \, R_n$ are given by \eqref{eq:appx_sol:Rn_xn_definition}.  
\item The initial conditions are satisfied in the following sense: 
\begin{align}
& \lim \limits_{t \rightarrow 0^+}  \int_\O \rho_n(t) \varphi \xdif \tau = \int_\O \rho_0 \varphi \xdif \tau \quad \forall \, \varphi \in \cD(\O), \label{eq:appx_sol:init_cond_rho0n}\\
& \lim \limits_{t \rightarrow 0^+} \int_\O (\rho_n u_n)(t) \varphi \xdif \tau = \int_\O q_0 \varphi \xdif \tau \quad \forall \, \varphi \in \cD(\O). \label{eq:appx_sol:init_cond_q0n}
\end{align}
\end{itemize}
A weak solution of the $\varepsilon$-level approximation problem in the sense of Proposition~\ref{prop:appx_sol:existence_epsilon_level} will be obtained as a weak limit of the solution $(\cB_n, \rho_n,  u_n)$ of the $n$-level approximation problem as $n \rightarrow \infty$. The existence result of the approximate system reads: 
\begin{prpstn}
Let $\O \subset \xR^3$ be a  bounded smooth domain and $\sigma, M, \overline {\rho}, \underline {\rho} \in (0, \infty)$. Let $\cB_0$ be a ball of radius $R_0 \in (0, \infty)$ and center $x_0 \in \xR^3$. Assume that 
\begin{equation}
\rho_0 \in \xWn{1, \infty} (\O) \; \mbox{satisfies} \; 0 < \underline {\rho} \leq \rho_0 \leq \overline {\rho}, \quad  q_0 \in \xLtwo(\O).
\label{eq:appx_sol:init_cdt_n_appx}
\end{equation} 
Let $g \in \xLinfty((0, T) \times \O)$, $\delta, \varepsilon \in (0, \infty)$ and $n \geq 0$. Assume that 
\begin{equation*}
\xdist (\cB_0, \partial \O) > 2 \sigma, 
\end{equation*} 
and that the initial energy satisfies
\begin{equation*}
\tilde {E}_\delta(\rho_0, q_0) = \int_\O \left(\frac12 \frac{|q_0|^2}{\rho_0} + \delta \frac{\rho_0^\beta}{\beta - 1} \right) \leq M.  
\end{equation*} 
Then there exists $T > 0$, depending only on $(\sigma, M)$, such that the $n$-level approximation problem admits a bounded energy weak solution $(\cB_n, \rho_n, u_n)$, which satisfies, for almost every $t \in (0, T)$,  
\begin{multline}
\tilde {E}_\delta(\rho_n(t), (\rho_n u_n)(t))  + \int_0^t \int_\O \left(2\mu_n \left|\xD(u_n) - \frac13 \xdiv (u_n) \xI_3\right|^2 + \nu_n |\xdiv (u_n)|^2\right) \xdif y \xdif \tau \\ 
+ \delta \varepsilon \int_0^t \int_\O (\rho_n)^{\beta-2} |\nabla \rho_n|^2 \xdif y \xdif \tau  + n \int_0^t \int_\O \chi_n |u_n - \Pi_n u_n|^2  \xdif y \xdif \tau \leq a_f \int_0^t \int_\O (1 - \chi_n) \rho_n^{\gamma_f} \xdiv (u_n) \xdif y \xdif \tau \\ 
+ a_b \int_0^t \int_\O \chi_n \rho_n^{\gamma_b} \xdiv (u_n) \xdif y \xdif \tau - \int_0^t \int_\O \rho_n g \cdot u_n \xdif y \xdif \tau  + \int_0^t \int_\O \chi_n \frac{\kappa_b}{R_n} \xdiv (u_n) \xdif y \xdif \tau  + \tilde {E}_\delta (\rho_0, q_0).
\label{eq:appx_sol:energyn_estimate}
\end{multline}
Moreover, 
\begin{equation*}
R_n(t) \geq \frac{R_0}{2}, \quad \xdist(\cB_n(t), \partial \O) \geq 2\sigma \quad \forall \, t \in [0, T].
\end{equation*} 
\label{prop:appx_sol:existence_n_solution}
\end{prpstn}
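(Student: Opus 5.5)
We prove Proposition~\ref{prop:appx_sol:existence_n_solution} with a Faedo--Galerkin scheme combined with a Schauder fixed point, in the spirit of \cite{feireisl2004OUP, Feireisl2003ARM}.

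\emph{Setup.} Fix $n$ and a finite-dimensional space $X_N = \xspan\{\psi_1,\dots,\psi_N\} \subset \cD(\O)^3$ built from smooth functions whose union is dense in $\xHone_0(\O)$. For a given $u \in \xCzero([0,T]; X_N)$ we solve two transport problems.
\begin{itemize}
\item The color function: $\chi$ solves \eqref{eq:appx_sol:chibn_transport_equation} with the affine field $\tilde\Pi u$, where $\tilde\Pi$ is computed from $\chi$ itself. We rewrite this as an ODE system for $(x(t),R(t))$. Since $\tilde\Pi u$ has the form $V+\frac{\Lambda}{3}(x-x(t))$, the ball $\cB(x(t),R(t))$ is transported to a ball with $\dot x = V$ and $\dot R/R = \Lambda/3$. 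Here $V$ and $\Lambda$ depend continuously on $(x,R)$ through the averages in \eqref{eq:appx_sol:tPibn_definition}. Cauchy--Lipschitz gives a unique local solution while $R>0$.
\item The density: $\rho$ solves the parabolic Neumann problem \eqref{eq:appx_sol:rhoe_continuity_equation} with a smooth velocity. The classical theory gives $\rho$ smooth, with $\underline\rho e^{-\int_0^t\|\xdiv u\|_\infty} \le \rho \le \overline\rho e^{\int_0^t\|\xdiv u\|_\infty}$, and $\rho$ depends continuously on $u$.
\end{itemize}
These are the transport results the paper refers to in Section~\ref{sec:nonlinear_transport_equation}.

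\emph{Galerkin step.} We then look for $u_N \in \xCzero([0,T];X_N)$ satisfying \eqref{eq:appx_sol:rhonun_momentum_equation} tested against $X_N$, with $\chi,\rho,R,x$ generated by $u_N$. Because $\rho\ge c>0$, the mass matrix $\int\rho\,\psi_i\cdot\psi_j$ is invertible. The problem becomes a fixed point $u=\mathcal{F}(u)$ on a ball of $\xCzero([0,T'];X_N)$. For $T'$ small, $\mathcal F$ is continuous and compact, since all norms on $X_N$ are equivalent and the data depend continuously on $u$. This includes the penalization term $n\chi(u-\Pi u)(\varphi-\Pi\varphi)$ and the variable viscosity $\mu_n=(1-\chi)\mu_f+n\chi$, which are continuous because $\chi\in\xCzero([0,T];\xLn p)$. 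Schauder's theorem then gives a local solution.

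\emph{Energy estimate and extension.} Taking $\varphi=u_N$ is legitimate in the Galerkin system. Combining it with the renormalized $\rho$-equation multiplied by $\delta\frac{\beta}{\beta-1}\rho^{\beta-1}$ gives \eqref{eq:appx_sol:energyn_estimate}. The $\varepsilon\nabla u\nabla\rho$ term exactly compensates the commutator from the $\varepsilon\Delta\rho$ term. For the penalization we use that $\Pi$ is the $\xLtwo(\cB)$-orthogonal projection onto $\cS$, so $\int\chi(u-\Pi u)\cdot u=\int\chi|u-\Pi u|^2$. The pressure terms $a_f,a_b$ and the $\kappa_b/R$ term are kept on the right-hand side as stated. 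They are controlled on a short time interval by Gronwall: $\rho^{\gamma}\le 1+\rho^\beta$ together with the dissipation $\|u\|_{\xHone}$ from Korn and the $\mu_f$ part, plus a Korn-type bound using $n\chi$. This yields bounds independent of $N$ on $[0,T]$, so the solution extends to $[0,T]$.

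\emph{Geometric constraints.} We control $\int_0^t|V|+|\Lambda|$ by $C\sqrt t\,\|u\|_{\xLtwo\xHone}$ over $R^{3/2}$ and $R^{5/2}$ factors, using Cauchy--Schwarz in \eqref{eq:appx_sol:tPibn_definition}. Choosing $T=T(\sigma,M)$ small ensures that $\log(R/R_0)\ge-\log2$ and that the center moves by less than the margin, so $R\ge R_0/2$ and the ball stays at distance $\ge 2\sigma$ from $\partial\O$. The stated bound $\xdist\ge2\sigma$ should read with the initial strict inequality and a reduced margin. Otherwise I expect the authors adjust $\sigma$; I would obtain $\ge\sigma'$ with a slack.

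\emph{Limit $N\to\infty$.} Using the uniform bounds, we pass to the limit $N\to\infty$.
\begin{itemize}
\item $\chi_N\to\chi$ strongly, since $(x_N,R_N)$ are bounded in $W^{1,2}(0,T)$ and Arzel\`a--Ascoli applies.
\item $\rho_N\to\rho$ strongly in $\xLtwo$, by parabolic estimates and Aubin--Lions.
\item $\rho_Nu_N$ converges weakly, and $\rho_Nu_N\otimes u_N$ converges by the standard Lions argument with time compactness of $\rho_Nu_N$.
\item The penalization term and the projections converge because $\Pi_N$ depends continuously on $(x_N,R_N)$.
\end{itemize}

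\emph{Main obstacle.} The delicate point is the Gronwall closure of the energy inequality uniformly in $N$. The pressure terms carry no damping, and the constants $n$, $\delta$, $\varepsilon$ are fixed. I would use $\int\rho^{\gamma}|\xdiv u|\le\eta\|u\|_{\xHone}^2+C_\eta(1+\int\rho^\beta)$, absorbing the first term into the dissipation. The $\delta\rho^\beta$ energy then closes the estimate by Gronwall, with $\beta\ge2\gamma$ ensuring $\rho^{2\gamma}\lesssim1+\rho^\beta$.
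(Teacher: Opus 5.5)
Your proposal is correct and follows essentially the same route as the paper. It uses a Galerkin/Schauder fixed point with $\chi$ from the ODE for $(x,R)$ and $\rho$ from the parabolic problem, closes the energy estimate by Gronwall using $\beta\geq\max\{2\gamma_f,2\gamma_b\}$, continues the solution with Cauchy--Schwarz control of $(\dot x_N,\dot R_N)$, and passes $N\to\infty$ via strong compactness of $\chi_N$ and standard compressible arguments. Your hedge on the distance bound is unnecessary: since $\xdist(\cB_0,\partial\O)>2\sigma$ strictly, choosing $T$ small in terms of the gap $\xdist(\cB_0,\partial\O)-2\sigma$ (the paper's $T_2$) gives exactly $\xdist(\cB_n(t),\partial\O)\geq 2\sigma$, and this survives the limit by uniform convergence of the propagators.
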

\begin{rmrk}
Note that, in addition to the penalization term in the momentum equation \eqref{eq:appx_sol:rhonun_momentum_equation}, we introduce in \eqref{eq:appx_sol:mun_nun_definition} a shear viscosity inside the bubble domain equal to $n$. Formally, in the limit $n \rightarrow \infty$, this corresponds to imposing 
\begin{equation} 
D(u_\varepsilon) = \frac13 \xdiv(u_\varepsilon) \xI_3.
\label{eq:appx_sol:H1_constraint_bubble_motion}
\end{equation}
Since this constraint is already satisfied by the bubble velocity, it is redundant from a purely theoretical standpoint. However, from a convergence perspective, it may be interesting to examine whether this $\xHone$-type
penalization improves the convergence of the penalized model toward the exact one. Nevertheless, as discussed in \cite {Hillairet2023MMN}, the constraint \eqref{eq:appx_sol:H1_constraint_bubble_motion} alone is not sufficient to guarantee the preservation of the spherical shape of the bubbles. In particular, to recover the bubble velocity, it would be necessary to additionnaly penalize a fourth compatible mode by introducing the following term in the momentum equation \eqref{eq:appx_sol:rhonun_momentum_equation}: 
\begin{equation*}
n \int_0^T \int_\O \Pi_{s, n}(t) u_n  \cdot \Pi_{s, n}(t) \varphi \xdif y \xdif \tau, 
\end{equation*}  
where, for $t \in [0,T]$, the operator $\Pi_{s, n}(t): \xLtwo(\O) \rightarrow \xR^3$ is defined for $\varphi \in \xLtwo(\O)$ by 
\begin{equation*}
\Pi_{s, n}(t) (\varphi) = \int_\O \left( (y - x_n(t))  \otimes  (y - x_n(t)) - \frac12 |y - x_n(t)|^2\right) \varphi \xdif y.
\end{equation*} 
\end{rmrk}
To prove Proposition~\ref{prop:appx_sol:existence_n_solution}, we need a final level of approximation, the $N$-level approximation, obtained via a Faedo-Galerkin approximation scheme. Let $\{\psi_i\}_{i \geq 1} \subset {\cD}(\overline {\O})$ be an orthonormal basis of $\xLtwo(\O)$
dense in $\{v \in \xWn{1, p}(\O)\; | \; v = 0 \; \mbox{on} \; \partial \O \}$ for any $1 \leq p \leq  \infty$. We set 
\begin{equation} 
X_N = \xspan \{\{\psi_i\}_{1 \leq i \leq N}\}.
\label{eq:appx_sol:psi_family}
\end{equation} 
Since $X_N$ is a finite-dimensional space, norms on $X_N$ induced by $\xWn{k, p}$ norms, $k \in \xN$, $1 \leq p  \leq \infty$ are equivalent. The task is to find a triplet $(\cB_N, \rho_N, u_N)$ satisfying:
\begin{itemize}[leftmargin=0pt]
\item For all $t \in [0, T]$, $\cB_N (t)$ is a ball of radius $R_N(t) \in (0, \infty)$ and center $x_N(t) \in \xR^3$ such that 
\begin{equation}
\chi_N = \x1_{\cB_N}
\label{eq:appx_sol:chibN_regularity} 
\end{equation} 
belongs to $\xCzero([0, T]; \xLn{p}(\O))$ for all $1 \leq p < \infty$.
\item There exists $\{\alpha_i\}_{i = 1}^N \subset \xCzero([0, T])$ such that $$u_N = \sum \limits_{i = 1}^N \alpha_i \psi_i.$$
\item The velocity field $u_N \in \xCzero([0, T]; X_N)$ and the density function $\rho_N \in \xLtwo(0, T; \xHtwo(\O)) \cap \xHone(0, T; \xLtwo(\O))$ satisfy the regularized continuity equation \eqref{eq:appx_sol:rhoe_continuity_equation}.
\item For all $\varphi \in \cD(0, T; X_N)$, the following weak formulation of the momentum equation holds: 
\begin{equation}
\begin{aligned}
& - \int_0^T \int_\O \rho_N \left(u_N \cdot \partial_t \varphi + u_N \otimes u_N : \xD (\varphi)\right) \xdif y  \xdif \tau \\ 
& + \int_0^T \int_\O 2 \mu_N \left(\xD(u_N) - \frac13 \xdiv (u_N) \xI_3 \right):\left(\xD (\varphi) - \frac13 \xdiv (\varphi) \xI_3\right) \xdif y \xdif \tau \\ 
& + \int_0^T \int_\O \left(\nu_N \xdiv (u_N) - p_\delta(\rho_N, \chi_N) \right) \xdiv (\varphi) \xdif y \xdif \tau + \varepsilon \int_0^T \int_\O \left(\nabla u_N \nabla \rho_N \right) \cdot \varphi \xdif y \xdif \tau \\ 
& + n \int_0^T \int_\O \chi_N (u_N - \Pi_N u_N) \cdot (\varphi - \Pi_N \varphi) \xdif y \xdif \tau  \\ 
= & - \int_0^T \int_\O \rho_N g \cdot \varphi \xdif y \xdif \tau + \int_0^T \int_\O \chi_N \frac{\kappa_b}{R_N} \xdiv (\varphi) \xdif y \xdif \tau. 
\end{aligned}
\label{eq:appx_sol:rhoNuN_momentum_equation}    
\end{equation}
where $\Pi_N$ is defined as in \eqref{eq:appx_sol:Pibn_definition} with $\chi_n$ replaced by $\chi_N$ and the viscosity coefficients are given by 
\begin{equation}
\mu_N = (1-\chi_N) \mu_f + n \chi_N, \quad \nu_N = (1 - \chi_N) \nu_f + \chi_N \nu_b.  
\label{eq:appx_sol:muN_nuN_definition}
\end{equation}
\item The color function $\chi_N $ satisfies 
\begin{equation*}
\left\{ 
\begin{aligned}
&\partial_t \chi_N + \tilde {\Pi}_N u_N \cdot \nabla \chi_N = 0 \; \mbox{in} \; \cD^\prime ((0, T) \times \xR^3), \\
&\chi_N(0, x) = \x1_{\cB_0}(x)  \; \mbox{for a.e.} \; x \in \xR^3, 
\end{aligned}
\right. 
\end{equation*} 
where $\tilde {\Pi}_N$ is defined as in \eqref{eq:appx_sol:tPibn_definition} with $\chi_n$ replaced by $\chi_N$. 
\item The initial conditions are satisfied in the sense \eqref{eq:appx_sol:init_cond_rho0n} --  \eqref{eq:appx_sol:init_cond_q0n}.
\end{itemize}
A weak solution of the $n$-level approximation problem, in the sense of Proposition~\ref{prop:appx_sol:existence_n_solution}, is obtained as a weak limit of solutions $(\cB_N, \rho_N, u_N)$ of the Faedo-Galerkin scheme as $N \rightarrow \infty$. The existence result of the approximate system reads: 
\begin{prpstn}
Let $\O \subset \xR^3 $ be a bounded smooth domain and $\sigma, \,  M, \, \overline {\rho}, \, \underline {\rho} \in (0, \infty)$. Let $\cB_0$ be a ball of radius $R_0 \in (0, \infty)$ and center $x_0 \in \xR^3$. Assume that 
\begin{equation}
\rho_0 \in \xWn{1, \infty}(\O), \quad 0 \leq \underline {\rho} \leq \rho_0 \leq \overline {\rho}, \quad u_0 \in X_N, \quad q_0 = \rho_0 u_0 \; \mbox{a.e. in} \; \O.
\label{appx-sol:rho0N_assumptions}
\end{equation}
Let $g \in \xLinfty((0, T) \times \O)$, $\delta, \varepsilon \in (0, \infty)$, $n \geq 0$ and $N \geq 1$. Assume that
\begin{equation*}
\xdist (\cB_0, \partial \O)  > 2 \sigma, 
\end{equation*} 
and that the initial energy satisfies
\begin{equation*}
\tilde {E}_\delta(\rho_0, q_0) = \int_\O \left(\frac12 \frac{|q_0|^2}{\rho_0}  + \delta \frac{\rho_0^\beta}{\beta - 1} \right) \xdif y \leq M. 
\end{equation*} 
Then, there exists $T > 0$, depending only on $(\sigma, M, \overline {\rho}, \underline {\rho})$, such that the Faedo-Galerkin scheme admits a bounded energy weak solution $(\cB_N, \rho_N, u_N)$, which satisfies, for all $t \in (0, T)$, 
\begin{multline}
\tilde {E}_\delta(\rho_N(t), (\rho_N u_N)(t)) + \int_0^t \int_\O \left(2\mu_N \left|\xD(u_N) - \frac13 \xdiv (u_N) \xI_3 \right|^2 + \nu_N \left|\xdiv (u_N)\right|^2\right) \xdif y \xdif \tau \\ 
+ \delta \varepsilon \int_0^t \int_\O \rho_N^{\beta-2} |\nabla \rho_N|^2 \mathrm{d} y \xdif \tau + n \int_0^t \int_\O \chi_N |u_N - \Pi_N u_N|^2 \xdif y \xdif \tau  \leq a_f \int_0^t \int_\O (1 - \chi_N) \rho_N^{\gamma_f} \xdiv (u_N) \xdif y \xdif \tau \\ 
+ a_b \int_0^t \int_\O \chi_N \rho_N^{\gamma_b} \xdiv (u_N) \xdif y \xdif \tau - \int_0^t\int_\O \rho_N g \cdot u_N \xdif y \xdif \tau + \int_0^t \int_\O \chi_N \frac{\kappa_b}{R_N} \xdiv(u_N)  \xdif y \xdif \tau + \tilde {E}_\delta(\rho_0, q_0)
\label{eq:appx_sol:energyN_estimate} 
\end{multline}
Moreover, 
\begin{equation*}
R_N(t) \geq \frac{R_0}{2}, \quad \xdist(\cB_N(t), \partial \O) \geq 2\sigma \quad \forall \, t \in [0, T].
\end{equation*}
\label{prop:app-sol:existence_Faedo_Galerkin_solution}
\end{prpstn}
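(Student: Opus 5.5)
The proof will be a fixed-point argument on the finite-dimensional velocity, in the spirit of Feireisl's Faedo--Galerkin construction for compressible fluids. We combine it with local well-posedness of the two transport problems: the parabolic continuity equation \eqref{eq:appx_sol:rhoe_continuity_equation} and the color-function equation driven by $\tilde{\Pi}_N u_N$.

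\textbf{Frozen transport problems.} Fix $T_0>0$ and $K>0$, and work in the closed convex set
\[
\mathcal{K}=\{v\in \xCzero([0,T_0];X_N) : \|v\|_{\xCzero([0,T_0];X_N)}\le K\}.
\]
For $v\in\mathcal{K}$:
\begin{itemize}
\item The rigid-plus-dilation field $\tilde\Pi v\in\cS$ is an affine vector field in $x$, so the color-function equation is solved explicitly by characteristics. The ball stays a ball $\cB[v](t)$, with center and radius obeying the ODEs $\dot x=\frac{3}{4\pi R^3}\int_{\cB}v$ and $\dot R/R=\frac{5}{4\pi R^5}\int_{\cB}(y-x)\cdot v$, i.e.\ $\Lambda/3$.
\item Since $|\dot R|,|\dot x|\le C(N)K$ by norm equivalence on $X_N$, we can pick $T\le T_0$ depending only on $K$, $R_0$ and $\sigma$ such that $R(t)\ge R_0/2$ and $\xdist(\cB(t),\partial\O)\ge 2\sigma$ on $[0,T]$. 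This uses the strict assumption $\xdist(\cB_0,\partial\O)>2\sigma$, after shrinking $T$ with the gap.
\item The maps $v\mapsto (x,R)$ and $v\mapsto \chi[v]\in \xCzero([0,T];\xLn{p})$ are continuous: the first by Lipschitz dependence of ODE solutions, the second because the $\xLn{p}$ distance between two balls is controlled by the differences of centers and radii.
\item For the density, the classical parabolic Neumann theory (as in Feireisl's book) gives a unique solution $\rho[v]\in \xLtwo(0,T;\xHtwo)\cap\xHone(0,T;\xLtwo)$. By the maximum principle it satisfies $\underline\rho\, e^{-\int_0^t\|\xdiv v\|_\infty}\le\rho\le\overline\rho\, e^{\int_0^t\|\xdiv v\|_\infty}$, and it depends continuously on $v$.
\end{itemize}

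\textbf{Galerkin ODE and Schauder fixed point.} Given $v$, define $u=\mathcal{T}(v)$ as the solution of the linearized Galerkin system. Write
\[
\int_\O\rho u(t)\cdot\psi
\]
as a functional of $u(t)$: it is $\mathcal{M}_{\rho(t)}u(t)$, where $\mathcal{M}_\rho$ is the matrix $\int\rho\psi_i\psi_j$. This matrix is invertible, uniformly so thanks to the lower bound on $\rho$. The equation then becomes
\[
\frac{\xdif}{\xdif t}\big(\mathcal{M}_{\rho(t)}u(t)\big)=\mathcal{N}[v,\rho,\chi](t).
\]
The right-hand side collects the following terms, each with coefficients frozen using $v$, $\rho[v]$ and $\chi[v]$:
\begin{itemize}
\item convection, with $\rho v\otimes u$;
\item viscosity, with $\mu_N,\nu_N$;
\item pressure $p_\delta(\rho,\chi)$;
\item the $\varepsilon\nabla u\nabla\rho$ term;
\item penalization;
\item gravity;
\item surface tension $\kappa_b/R$.
\end{itemize}
It is linear in $u$ with coefficients bounded in terms of $K$, and continuous in time. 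Hence $u=\mathcal{T}(v)$ exists on $[0,T]$, and Gronwall gives a bound on $\|u\|_{\xCzero}$ plus a bound on $\|\dot u\|$. Choosing $K$ large relative to the data, then $T$ small, makes $\mathcal{T}(\mathcal{K})\subset\mathcal{K}$. Compactness follows from Arzel\`a--Ascoli via the bound on $\dot u$. Continuity of $\mathcal{T}$ follows from the continuity of $v\mapsto(\rho,\chi,R,x)$, and it is here that the $\xCzero([0,T];\xLn{p})$ regularity of $\chi_N$ is used. Schauder's theorem then gives a fixed point, which solves the Faedo--Galerkin scheme on $[0,T]$.

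\textbf{Energy and uniformity of $T$.} At the fixed point we test the momentum equation with $u_N$ itself, which is allowed since $u_N\in\xCzero([0,T];X_N)$, and use the continuity equation multiplied by $\frac12|u_N|^2$ and by $\delta\frac{\beta}{\beta-1}\rho_N^{\beta-1}$.
\begin{itemize}
\item The convective and $\varepsilon\nabla u\nabla\rho$ terms cancel.
\item The artificial pressure produces $\frac{\delta\rho^\beta}{\beta-1}$ and the dissipation $\delta\varepsilon\beta\int\rho^{\beta-2}|\nabla\rho|^2$.
\item The penalization term gives $n\int\chi_N|u_N-\Pi_Nu_N|^2$, because $\Pi_N(t)$ is the $\xLn2(\cB_N)$-orthogonal projection onto $\cS$.
\item The $\gamma_f,\gamma_b$ pressures and surface tension remain on the right-hand side.
\end{itemize}
This yields \eqref{eq:appx_sol:energyN_estimate}.

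The main obstacle is to make $T$ depend only on $(\sigma,M,\overline\rho,\underline\rho)$ and not on $K$, because $K$ is tied to the Gronwall constant. The plan is as follows:
\begin{enumerate}
\item Use the energy inequality to bound $\|u_N\|_{\xLtwo(\O)}$. On the finite-dimensional space $X_N$ this controls every norm, so it bounds $\|\xdiv u_N\|_\infty$ and hence the two-sided density bounds.
\item Absorb the right-hand-side pressure and surface-tension terms by Gronwall.
\item Note that the bubble parameters move with speed bounded by these energy quantities, so the ball constraints hold on a time span depending only on the data.
\item Conclude with a continuation argument: restart from time $T$ as long as the a priori bounds hold.
\end{enumerate}
If this $N$-independence is too delicate, it suffices to obtain $T$ independent of the fixed-point constant $K$. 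Uniformity in $N$ is then recovered later from the energy estimate combined with the geometric ODE bounds.
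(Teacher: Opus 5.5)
Your local construction is essentially the paper's. It is a Schauder fixed point on a ball of $\xCzero([0,T];X_N)$, with:
\begin{itemize}
\item the ball transported by the explicit center/radius ODE;
\item the density obtained from the Neumann parabolic problem, with its maximum-principle bounds;
\item an affine Galerkin ODE with a uniformly invertible mass matrix, and Arzel\`a--Ascoli for compactness;
\item the energy inequality obtained by testing with $u_N$ at the fixed point.
\end{itemize}
You linearize the convection in conservative form and get invariance of the ball from Gronwall on the linear ODE. The paper instead uses $\rho(u\cdot\nabla)\tilde u$ and the energy identity of the linearized problem; both work. The weak point is what the statement requires beyond local existence, namely that $T$ is independent of $N$. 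The limit $N\to\infty$ is taken on a fixed interval, so your fallback of recovering uniformity in $N$ ``later'' is not an option.

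As written, your steps 1 and 3 do not deliver this uniformity.

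\emph{Step 1 is circular.} Turning $\int_\O\rho_N|u_N|^2$ into a bound on $\|u_N(t)\|_{\xLtwo(\O)}$ needs the lower bound $\rho_N\ge\underline\rho\exp(-\int_0^t\|\xdiv (u_N)\|_{\xLinfty(\O)})$, which is what you are trying to deduce. You break this with the dissipation. Gronwall gives $\int_0^t\|\nabla u_N\|_{\xLtwo(\O)}^2\xdif\tau\le K$ with $K$ independent of $N$, as long as $R_N\ge R_0/2$ (this is used for the surface-tension term). Hence $\int_0^t\|\xdiv (u_N)\|_{\xLinfty(\O)}\xdif\tau\le c(N)\sqrt{tK}$, which gives the density bounds, and then $\|u_N\|_{\xLinfty(0,t;\xLtwo(\O))}$.

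\emph{Step 3 then depends on $N$.} All of these bounds carry the norm-equivalence constant $c(N)$. If you bound the \emph{speed} $|\dot x_N(t)|+|\dot R_N(t)|$ by $\|u_N(t)\|_{\xLtwo(\O)}$, the interval on which $R_N\ge R_0/2$ and $\xdist(\cB_N(t),\partial\O)\ge 2\sigma$ persist depends on $N$. The missing idea is to bound the \emph{displacement} through the $N$-uniform dissipation, using Cauchy--Schwarz in time and Poincar\'e:
\[
\int_0^t\big(|\dot x_N|+|\dot R_N|\big)\xdif\tau\le C R_0^{-3/2}\sqrt{t}\,\|u_N\|_{\xLtwo((0,t)\times\O)}\le C R_0^{-3/2}c_p\sqrt{t}\,\|\nabla u_N\|_{\xLtwo((0,t)\times\O)}\le C R_0^{-3/2}c_p\sqrt{tK}.
\]
This gives explicit times $T_1,T_2$ depending only on $R_0$, $K$ and $\xdist(\cB_0,\partial\O)-2\sigma$, as in the paper. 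The $N$-dependent bounds on $\rho_N$ and $u_N$ are then used only to restart the local fixed point with a step bounded from below, up to $\min\{T_1,T_2\}$.

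This route also needs the coercivity
\[
\int_\O 2\mu_N\big|\xD(u)-\tfrac13\xdiv(u)\xI_3\big|^2+\nu_N|\xdiv(u)|^2\xdif y\ge\min\{n,\mu_f\}\,\|\nabla u\|_{\xLtwo(\O)}^2
\]
for $u$ vanishing on $\partial\O$. So one needs $n>0$; the paper takes $n\ge\mu_f$. For $n=0$, a divergence-free field supported in the bubble has zero dissipation, so nothing controls $\nabla u_N$ there.
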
 

\section{A nonlinear transport equation}\label{sec:nonlinear_transport_equation}
This section is devoted to the nonlinear transport equation
\begin{equation}
\left\{
\begin{aligned}
& \partial_t \chi + \tilde {\Pi} u \cdot \nabla \chi = 0 \; \mbox{in} \; \cD^\prime((0, T) \times \xR^3), \\
& \chi\vert_{t=0} = \x1_{\cB_0} \; \mbox{in} \; \xR^3, 
\end{aligned}
\right. 
\label{eq:smlrt-prop:chib_transport_equation}
\end{equation} 
where $ \cB_0$ is a ball of radius $R_0 \in (0, \infty)$ and center $x_0 \in \xR^3$, and  $\tilde {\Pi}$ is defined analogously to $\tilde {\Pi}_n$ in \eqref{eq:appx_sol:tPibn_definition}, with $\chi_n$ replaced by $\chi$. We first address the regular case. 
\begin{prpstn}[Well-posedness]
Let $T> 0$, $u \in {\xCzero}([0, T]; \cD(\overline {\O}))$. 
\begin{itemize}[leftmargin=0pt]
\item If $T$ and $u$ satisfy 
\begin{equation}
T  \|u\|_{\xLinfty(0, T; \xLtwo(\O))} \leq R_0 \sqrt{\frac{\pi}{5} R_0} , 
\label{eq:smlr-prop:tildeT_normu_cdt}
\end{equation} 
then there exists a unique solution $\chi$ of \eqref{eq:smlrt-prop:chib_transport_equation} and 
\[\chi \in \xLinfty((0, T) \times  \xR^3) \cap \xCzero([0, T]; \xLn{p}(\xR^3)) \] for all $1 \leq p < \infty$.
\item For all $t \in [0, T]$,
\[
\chi(t, \cdot) = \x1_{\tilde {\eta}[t](\cB_0)}, 
\]
where  $\tilde {\eta} \in \xCone([0, T]; \cM)$ is the propagator associated to $\tilde {\Pi} u$. Moreover, for  $x_b, R_b \in \xCone([0, T])$ satisfying
\[ 
\chi = \x1_{\cB(x_b, R_b)}, 
\]
it holds $R_b(t) \geq R_0/2$ for all $t \in [0, T]$. 
\end{itemize} 
\label{prop:smlrt-prop:existence_regular_velocity}
\end{prpstn}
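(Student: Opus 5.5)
The transport field $\tilde{\Pi}u$ has the form $V(t) + \frac{\Lambda(t)}{3}(x - x_b(t))$, where
\[
V = \frac{3}{4\pi R_b^3}\int_\O \chi u \xdif y, \qquad \frac{\Lambda}{3} = \frac{5}{4\pi R_b^5}\int_\O \chi (y-x_b)\cdot u \xdif y,
\]
and these coefficients depend on $\chi$ itself. The plan is to reduce \eqref{eq:smlrt-prop:chib_transport_equation} to a finite-dimensional ODE system for $(x_b,R_b)$. We look for solutions of the form $\chi(t) = \x1_{\cB(x_b(t),R_b(t))}$. A ball moved by the flow of such an affine field, a translation plus an isotropic dilation about its own center, remains a ball with center $x_b$ and radius $R_b$ obeying
\begin{equation*}
\dot x_b = F(x_b,R_b,t) := \frac{3}{4\pi R_b^3}\int_{\cB(x_b,R_b)\cap\O} u(t,y)\xdif y, \qquad \dot R_b = G(x_b,R_b,t) := \frac{5}{4\pi R_b^4}\int_{\cB(x_b,R_b)\cap\O} (y-x_b)\cdot u(t,y)\xdif y .
\end{equation*}
Here $u$ is extended by $0$ outside $\O$; since $u \in \xCzero([0,T];\cD(\overline\O))$, the extension can be taken smooth, or we simply use that integrals over balls of a bounded function are Lipschitz in the center and radius.

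\textbf{Step 1: local existence for the ODE.} On the region $\{R_b \ge R_0/4\}$, the maps $F$ and $G$ are continuous in $t$ and locally Lipschitz in $(x_b,R_b)$. Indeed, after the change of variables $y = x_b + R_b z$ with $z \in \cB(0,1)$, the dependence on $(x_b,R_b)$ enters through the smooth function $u(t,x_b+R_bz)$. Cauchy--Lipschitz then gives a unique local $\xCone$ solution.

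\textbf{Step 2: a priori bound and globality on $[0,T]$.} By Cauchy--Schwarz,
\[
|\dot R_b| \le \frac{5}{4\pi R_b^4}\Big(\int_{\cB_b} |y-x_b|^2\Big)^{1/2}\|u\|_{\xLtwo} = \frac{5}{4\pi R_b^4}\sqrt{\frac{4\pi R_b^5}{5}}\,\|u\|_{\xLtwo} = \sqrt{\frac{5}{4\pi}}\,R_b^{-3/2}\|u\|_{\xLtwo}.
\]
As long as $R_b \ge R_0/2$, this yields $|\dot R_b| \le \sqrt{5/(4\pi)}\,(2/R_0)^{3/2}\|u\|_{\xLinfty(\xLtwo)}$. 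Integrating over $[0,T]$ and using \eqref{eq:smlr-prop:tildeT_normu_cdt} gives
\[
|R_b(t)-R_0| \le \sqrt{\frac{5}{4\pi}}\,\frac{2^{3/2}}{R_0^{3/2}}\,R_0\sqrt{\frac{\pi R_0}{5}} = \sqrt 2 \cdot \frac{1}{\sqrt 2}\cdot\frac{R_0}{\sqrt{2}}\cdots
\]
so the constant should come out to at most $R_0/2$. The exact constant must be checked; this is the delicate calculation, and it may require tracking the bound $R_b \ge R_0/2$ more carefully, for instance through $\frac{\xdif}{\xdif t}R_b^{5/2}$, which is bounded by a constant times $\|u\|_{\xLtwo}$. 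A continuity (bootstrap) argument then keeps $R_b \ge R_0/2$ on $[0,T]$. The bound on $\dot x_b$ is similar, so the solution does not blow up and extends to all of $[0,T]$. The propagator $\tilde\eta[t]x = x_b(t) + \frac{R_b(t)}{R_0}(x-x_0)$ belongs to $\xCone([0,T];\cM)$.

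\textbf{Step 3: existence and uniqueness for the transport equation.} Given the ODE solution, set $\chi = \x1_{\tilde\eta[t](\cB_0)}$. The vector field $w = V + \frac{\Lambda}{3}(x-x_b)$ is affine in $x$ with continuous coefficients. Hence $\chi$ is the classical Lagrangian, and therefore distributional, solution of $\partial_t\chi + w\cdot\nabla\chi = 0$. By construction its $V$ and $\Lambda$ coincide with those of $\tilde\Pi u$. Continuity into $\xLn p$ follows from the continuity of $(x_b,R_b)$, since $\|\x1_{B_1}-\x1_{B_2}\|_{\xLn p}^p = |B_1 \triangle B_2|$. For uniqueness, take any solution $\chi \in \xLinfty \cap \xCzero(\xLn p)$. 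Its coefficients $V(t)$ and $\Lambda(t)$ are continuous, because $R_b$ and $x_b$ are defined from $\chi$ by \eqref{eq:appx_sol:Rn_xn_definition}, which is continuous in $\xLn1$, and $R_b>0$ near $t=0$. So the field is affine and smooth in $x$, and DiPerna--Lions (or simply the linear theory for Lipschitz fields) shows that $\chi$ is the transport of $\x1_{\cB_0}$ by its flow, hence the indicator of a ball whose center and radius solve the ODE above. Uniqueness for that ODE concludes. The main obstacle is Step 2, namely obtaining the precise constant in \eqref{eq:smlr-prop:tildeT_normu_cdt} that guarantees $R_b \ge R_0/2$.
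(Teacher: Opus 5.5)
Your route is the paper's: reduce to the ODE for $(x_b,R_b)$, apply Cauchy--Lipschitz, continue as long as $R_b>0$, and recover $\chi$ and its uniqueness from linear transport by the affine field $\tilde\Pi u$. The gap is Step~2, which is the only place the smallness hypothesis is used, and it cannot be closed as printed.

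\textbf{Why Step~2 fails as stated.} The quantity $T\|u\|_{\xLinfty(0,T;\xLtwo(\O))}$ scales like length$^{5/2}$, whereas $R_0\sqrt{\pi R_0/5}$ scales like length$^{3/2}$. That is why your computation evaluates to the pure number $\sqrt 2$ rather than a multiple of $R_0$. The statement is in fact false for small $R_0$. Suppose $\cB(x_0,2R_0)\subset\O$ and $u(y)=-a(y-x_0)\phi(y)$, with $\phi\in\cD(\O)$, $0\le\phi\le 1$, $\phi=1$ on $\cB(x_0,R_0)$ and $\mathrm{supp}\,\phi\subset\cB(x_0,2R_0)$. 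Then $x_b\equiv x_0$ and $R_b(t)=R_0e^{-at}$, while $T\|u\|_{\xLinfty(0,T;\xLtwo(\O))}\le 2^{7/2}\sqrt{\pi/5}\,aT\,R_0^{5/2}$. Taking $aT=1$ and $R_0\le 2^{-7/2}$, the hypothesis holds but $R_b(T)=R_0/e<R_0/2$.

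\textbf{The paper's proof does not actually close this step.} It bounds $|\dot R_b|$ by $\sqrt{5/(4\pi)}\,R_0^{-3/2}\|u\|_{\xLtwo(\O)}$, which drops the Jacobian factor $(R_0/R_b)^{3/2}$ coming from the change of variables $y\mapsto\tilde\eta_X(y)$ in $f_R$. Its final arithmetic also yields $\|\dot R_b\|_{\xLone(0,T)}\le 1/2$, not $R_0/2$. Your pointwise bound $|\dot R_b|\le\sqrt{5/(4\pi)}\,R_b^{-3/2}\|u(t)\|_{\xLtwo(\O)}$ is the correct one.

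\textbf{How to fix it.} Change the hypothesis; your $R_b^{5/2}$ idea is exactly what then closes the argument. Your bound gives $\bigl|\tfrac{\xdif}{\xdif t}R_b^{5/2}\bigr|\le\tfrac52\sqrt{5/(4\pi)}\,\|u(t)\|_{\xLtwo(\O)}$. Hence, on the existence interval,
\[ R_b(t)^{5/2}\ge R_0^{5/2}-\tfrac52\sqrt{5/(4\pi)}\,T\|u\|_{\xLinfty(0,T;\xLtwo(\O))}. \]
Therefore $R_b\ge R_0/2$ on $[0,T]$ provided
\[ T\|u\|_{\xLinfty(0,T;\xLtwo(\O))}\le\tfrac45\bigl(1-2^{-5/2}\bigr)\sqrt{\pi/5}\;R_0^{5/2}. \]
Under this condition $R_b$, $\dot R_b$ and $\dot x_b$ stay bounded, so the solution extends to all of $[0,T]$.

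This threshold cannot be improved. Take $u(t,\cdot)$ equal to $-a(t)(y-x_0)$ on the current ball, cut off smoothly just outside it, with $\|u(t)\|_{\xLtwo(\O)}$ held fixed. This choice asymptotically saturates the Cauchy--Schwarz step. In particular, even the dimensionally corrected constant $\sqrt{\pi/5}\,R_0^{5/2}$ is too large.

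With the hypothesis replaced in this way, your Steps~1 and~3 go through as written.
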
 

\begin{proof}[Proof of proposition~\ref{prop:smlrt-prop:existence_regular_velocity}]
We can assume $u \in {\xCzero}([0, T]; {\cD}(\xR^3))$ without loss of generality. Assume that $\chi$ is a solution to \eqref{eq:smlrt-prop:chib_transport_equation}. Then, the equation \eqref{eq:smlrt-prop:chib_transport_equation} can be interpreted as a linear transport equation with prescribed velocity field $\tilde {\Pi} u \in \xCzero([0, T]; \cS)$ satisfying
\begin{equation*}
(\tilde {\Pi} u)(t, x) = V(t) + \frac{\Lambda(t)}{3}(x - x_b(t)) \quad \forall \, (t, x) \in [0, T] \times \xR^3. 
\end{equation*}
By the method of characteristics, we obtain
\begin{equation}
\chi(t, \tilde{\eta}[t](x)) = \x1_{\cB_0}(x) \quad \forall \, (t, x) \in [0, T] \times \xR^3, 
\label{eq:smlrt-prop:chib_characteristics}
\end{equation}
where $\tilde {\eta} \in \xCone([0, T]; \cM)$ is the propagator associated to $\tilde {\Pi} u$. In particular, we have 
\begin{equation}
\chi(t, x) = \x1_{\tilde{\eta}[t](\cB_0)}(x), \quad \tilde{\eta}[t](x) = x_b(t) + \frac{R_b(t)}{R_0} (x - x_0) \quad \forall \, (t, x) \in [0, T] \times \xR^3,
\label{eq:smlrt-prop:teta_definition}
\end{equation}
and $(x_b, R_b)$ is the unique solution to the ODE
\begin{equation*}
\left\{
\begin{aligned}
& \dot {x}_b = {V}, \quad \dot {R}_b = \frac{\Lambda}{3} R_b, \\ 
& x_b(0) = x_0, \quad R_b(0) = R_0. 
\end{aligned}
\right.
\end{equation*}
We then substitute \eqref{eq:smlrt-prop:teta_definition} and \eqref{eq:smlrt-prop:chib_characteristics} in the expression for $\tilde {\Pi} u$ to get for $(t, x) \in [0, T] \times \xR^3$,   
\begin{multline*}
(\tilde {\Pi} u) (t, x) = \frac{3}{4 \pi R_0^3} \int_{\cB_0} \x1_\O(\tilde {\eta}[t](y)) u (t , \tilde {\eta}[t](y)) \xdif y \\ 
+ \frac{5}{4 \pi R_0^4 R_b(t)} \left(\int_{\cB_0} \x1_\O(\tilde {\eta}[t](y)) (y - x_0) \cdot u (t, \tilde {\eta}[t](y)) \xdif y \right) (x - x_b(t)).
\end{multline*} 
It follows that $X_b = (x_b, \, R_b)$ satisfies the nonlinear ODE 
\begin{equation}
\dot {X}_b = f(t, X_b) \; \mbox{with} \; X_b(0) = (x_0, \, R_0), 
\label{eq:smlrt-prop:chib_ODE_definition}
\end{equation} 
where
\begin{equation*}
f : \left\{ 
\begin{aligned} 
&  [0, T] \times (\xR^3 \times (0, \infty)) \to \xR^3 \times (0, \infty) \\ 
& (t, X) \mapsto (f_{x}(t, X), f_{R}(t, X)) 
\end{aligned}
\right.
\end{equation*}
is defined, for $X = (x, R) \in \xR^3 \times (0, \infty)$, by 
\begin{equation}
f_x (t, X) = \frac{3}{4 \pi R_0^3} \displaystyle \int_{\cB_0} \x1_{\O}(\tilde {\eta}_X(y)) u (t, \tilde{\eta}_X(y)) \xdif y, \quad 
f_R (t, X) = \frac{5}{4 \pi R_0^4} \displaystyle \int_{\cB_0} \x1_{\O}(\tilde {\eta}_X(y)) (y - x_0) \cdot u (t, \tilde{\eta}_X(y)) \xdif y, 
\label{eq:smlrt-prop:fx_fR_ODE_definition}
\end{equation} 
and 
\begin{equation}
\tilde {\eta}_X(y) = x + \frac{R}{R_0}(y - x_0) \quad \forall \, y \in \xR^3. 
\label{eq:smlrt-prop:tetaX_definition}
\end{equation} 
By the Cauchy-Lipschitz theorem, there exists a unique maximal $\xCone$ solution to equation~\eqref{eq:smlrt-prop:chib_ODE_definition},  provided that $f$ is continuous in $(t, X)$ and locally Lipschitz in $X$. The continuity is obvious. Let $t \in [0, T]$ and $X_i = (x_i, R_i), \, i \in \{1, 2\}$, we write
\begin{equation*}
f_x(t, X_1) - f_x(t, X_2) = M_1(t) + M_2(t),
\end{equation*}
with 
\begin{align*}
& M_1(t) = \frac{3}{4 \pi R_0^3} \int_{\cB_0} \x1_\O(\tilde {\eta}_{X_1}(y)) \left( u (t,\tilde {\eta}_{X_1}(y)) - u (t,\tilde {\eta}_{X_2}(y)) \right) \xdif y, \\ 
& M_2(t) = \frac{3}{4 \pi R_0^3} \int_{\cB_0} u (t,\tilde {\eta}_{X_2}(y))  \left(\x1_\O(\tilde {\eta}_{X_1}(y)) -  \x1_\O(\tilde {\eta}_{X_2}(y)) \right) \xdif y.
\end{align*} 
On one hand, 
\begin{align}
& |M_1(t)| \notag \\ 
& \leq \frac{3}{4 \pi R_0^3}   \int_{\cB_0} \left| x_1 - x_2 + \frac{R_1 - R_2}{R_0}(y - x_0)\right| \int_0^1 \left| \nabla u \left(t, s \big(x_1 + \frac{R_1}{R_0}(y - x_0)\big) + (1-s) \big(x_2 + \frac{R_2}{R_0}(y - x_0)\big) \right) \right| \xdif s \xdif y \notag \\ 
& \leq \|\nabla u\|_{\xLinfty((0, T) \times \O)}  \left(|x_1 -  x_2| + |R_1 - R_2| \right). \label{eq:smlrt-prop:M1_Lipschitz}
\end{align}
And on the other hand, 
\[
|M_2(t)| \leq  \frac{3}{4 \pi R_0^3} \|u\|_{\xLinfty((0, T) \times \O)} \int_{\cB_0} \left| \x1_\O(\tilde {\eta}_{X_1}(y)) -  \x1_\O(\tilde {\eta}_{X_2}(y)) \right| \xdif y.
\]
Since for all $y \in \cB_0$, $$|\tilde {\eta}_{X_1}(y) -\tilde {\eta}_{X_2}(y)| \leq  |x_1 -  x_2| + |R_1 - R_2| = d_{1, 2},$$ we deduce that the integrand is nonzero only if $\tilde {\eta}_{X_1}(y)$ and $\tilde {\eta}_{X_2}(y)$  lie in a $d_{1, 2}$-neighborhood $\cU$ of $\partial \O$. Hence, 
\begin{align}
|M_2(t)| & \leq \frac{3}{4 \pi R_0^3} \|u\|_{\xLinfty((0, T) \times \O)} \left( \int_{\tilde {\eta}_{X_1}^{-1}(\cU)} \xdif y + \int_{\tilde {\eta}_{X_2}^{-1}(\cU)} \xdif y\right), \notag \\ 
& \leq \frac{3}{4 \pi} \left( \frac{1}{R_1^3} + \frac{1}{R_2^3} \right) \|u\|_{\xLinfty((0, T) \times \O)}  |\cU|,\notag \\ 
& \lesssim \left( \frac{1}{R_1^3} + \frac{1}{R_2^3} \right) \|u\|_{\xLinfty((0, T) \times \O)}  \left( |x_1 -  x_2| + |R_1 - R_2| \right),
\label{eq:smlrt-prop:M2_Lipschitz}
\end{align}
where the hidden constant is independent of $X$. Following similar arguments used for $f_x$, we deduce that  
\begin{equation}
|f_R(t, X_1) - f_R(t, X_2)| \lesssim \left(\|\nabla u \|_{\xLinfty((0, T) \times \O)}  +  \left(\frac{1}{R_1^3} + \frac{1}{R_2^3} \right)  \|u\|_{\xLinfty((0, T) \times \O)} \right) \left(|x_1 -  x_2| + |R_1 - R_2| \right).
\label{eq:smlrt-prop:fR_Lipschitz}
\end{equation}
We therefore obtain that $f$ is locally Lipschitz in $X$, and conclude that equation \eqref{eq:smlrt-prop:chib_ODE_definition} admits a unique $\xCone$ maximal solution $(x_b, R_b)$. Moreover, this solution can be extended to $[0, T]$ provided that $$R_b(t) > 0 \quad \forall \, t \in [0, T].$$ From \eqref{eq:smlrt-prop:chib_ODE_definition} and \eqref{eq:smlrt-prop:fx_fR_ODE_definition}, we get the estimate
\begin{equation*}
\|\dot {R_b}\|_{\xLn1(0, T)} \leq  T \|\dot {R_b}\|_{\xLinfty(0, T)} \leq \sqrt{\frac{5}{\pi}} \frac{T}{2 R_0 \sqrt{R_0}}  \|u \|_{\xLinfty(0, T;\xLtwo(\O))}.  
\end{equation*} 
Since 
\begin{equation*}
R_b(t) = \int_0^t \dot {R_b}(s) \xdif s + R_0 \quad \forall \, t \in [0, T], 
\end{equation*}
it follows that if
\begin{equation}
T \|u\|_{\xLinfty(0, T; \xLtwo(\O))} \leq R_0 \sqrt{\frac{\pi}{5} R_0},
\label{eq:smlrt-prop:tdT_L2L2_condition_colision}
\end{equation}
then
\begin{equation*}
R_b(t) \geq \frac{R_0}{2} > 0 \quad \forall \, t \in  [0, T]. 
\end{equation*}
This completes the proof.
\end{proof}
We now analyse the stability properties of the nonlinear transport equation \eqref{eq:smlrt-prop:chib_transport_equation}.  
\begin{prpstn}[Strong sequential continuity]
Let $\{u_k, \chi_k\}$ be a bounded sequence in $ \xCone([0, T]; {\cD}(\overline {\O})) \times \xLinfty((0, T) \times \O)$ such that 
\begin{align}
& \star \;  u_k \rightarrow  u \; \mbox{in} \;  \xCone([0, T];  {\cD} (\overline {\O})),\label{eq:smlrt-prop:hypothesis_strong_seq_conv_uk} \\ 
& \star \; \partial_t \chi_k + \tilde {\Pi}_k  u_k \cdot \nabla \chi_k
 = 0 \; \mbox {in} \; \cD((0, T) \times \xR^3), \\
& \phantom {\star} \; \; \chi_k \vert_{t = 0} = \x1_{\cB_0} \; \mbox{a.e. in} \; \xR^3, \; \cB_0 = \cB(x_0, R_0), \; x_0 \in \xR^3, \; R_0 \in (0, \infty), \label{eq:smlrt-prop:hypothesis_strong_seq_continuity_2} \\ 
& \star \; \chi_k = \x1_{\cB(x_k, R_k)},  \;    R_k \geq \frac{R_0}{2}.  \label{eq:smlrt-prop:hypothesis_strong_seq_def_chik}
\end{align}
Then, 
\begin{equation*}
\chi_k \rightarrow \chi \; \mbox{weakly--* in} \; \xLinfty((0, T) \times \xR^3), \; \mbox{strongly in} \; \xCzero([0, T]; \xLn{p}(\xR^3)),
\end{equation*}
for all $1 \leq p < \infty$, and $(u, \chi)$ is solution of 
\begin{equation*}
\partial_t \chi + \tilde {\Pi} u \cdot \nabla \chi = 0 \; \mbox{in} \; (0, T) \times \xR^3, \quad \chi \vert_{t = 0} = \x1_{\cB_0} \; \mbox{in} \; \xR^3.
\end{equation*}
Moreover, if $\tilde {\eta}_k$ and  $\tilde {\eta}$ denote the propagators associated to $\tilde {\Pi}_k u_k$ and $\tilde {\Pi} u$, respectively, then the following additional convergences hold:
\begin{align*}
& \tilde {\Pi}_k u_k \rightarrow \tilde {\Pi} u \; \mbox{in} \; {\xCzero} ([0, T]; \cS), \\ 
& \tilde {\eta}_k \rightarrow \tilde {\eta} \; \mbox{in} \;\xCone([0, T]; \cM). 
\end{align*}
In particular, for $x_b, R_b \in \xCone([0, T])$ satisfying 
\begin{equation*}
\chi = \x1_{\cB(x_b, R_b)}, 
\end{equation*}
it holds $R_b(t) \geq R_0/2$ for all $t \in [0, T]$. 
\label{prop:smlrt-prop:strong_sequential_continuity}
\end{prpstn}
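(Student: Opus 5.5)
The plan is to reduce everything to the finite-dimensional ODE \eqref{eq:smlrt-prop:chib_ODE_definition} for $X = (x_b, R_b)$ and to use continuous dependence of ODE solutions on the vector field. By Proposition~\ref{prop:smlrt-prop:existence_regular_velocity} and uniqueness of its solution, each $\chi_k = \x1_{\cB(x_k,R_k)}$ with $X_k=(x_k,R_k)\in\xCone([0,T])$ solving
\[
\dot X_k = f^k(t,X_k), \qquad X_k(0)=(x_0,R_0),
\]
where $f^k$ is given by \eqref{eq:smlrt-prop:fx_fR_ODE_definition} with $u$ replaced by $u_k$. Let $f$ be the corresponding field built from $u$.

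\textbf{Step 1: uniform bounds and compactness.} Since $u_k$ is bounded in $\xCone([0,T];\cD(\overline\O))$, the formulas \eqref{eq:smlrt-prop:fx_fR_ODE_definition} give $|f^k(t,X)|\lesssim \|u_k\|_{\xLinfty}$ uniformly in $k$, $t$ and $X$. Hence $\dot X_k$ is bounded. Moreover, $x_k$ stays in a bounded set, since the centres must remain near $\O$ because $u_k$ vanishes outside $\O$, and $R_0/2\le R_k\le C$. By Arzelà--Ascoli, a subsequence satisfies $X_k\to X$ in $\xCzero([0,T])$ with $R(t)\ge R_0/2$.

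\textbf{Step 2: identification of the limit.} I would write
\[
f^k(t,X_k)-f(t,X) = \big(f^k(t,X_k)-f(t,X_k)\big) + \big(f(t,X_k)-f(t,X)\big).
\]
The first bracket is bounded by $C\|u_k-u\|_{\xLinfty}$, uniformly because $R_k\ge R_0/2$. The second bracket tends to zero by the Lipschitz estimates \eqref{eq:smlrt-prop:M1_Lipschitz}--\eqref{eq:smlrt-prop:fR_Lipschitz}, whose constants are uniform precisely because $R_k,R\ge R_0/2$. This lower bound is the main point to watch: it is what gives local Lipschitz control uniformly along the sequence and prevents degeneration. It follows that $\dot X_k\to f(\cdot,X)$ uniformly. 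Passing to the limit in the integral equation shows that $X$ is a $\xCone$ solution of the limit ODE. By Cauchy--Lipschitz uniqueness, $X$ is the unique solution associated with $u$, so the whole sequence converges, not just the subsequence, and the convergence holds in $\xCone([0,T])$.

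\textbf{Step 3: consequences.} From $X_k\to X$ in $\xCone$, the propagators $\tilde\eta_k[t](y)=x_k+\frac{R_k}{R_0}(y-x_0)$ converge to $\tilde\eta$ in $\xCone([0,T];\cM)$. Likewise $\tilde\Pi_k u_k=\dot x_k+\frac{\dot R_k}{R_k}(\cdot-x_k)$ converges to $\tilde\Pi u$ in $\xCzero([0,T];\cS)$; here the identification of $\tilde\Pi u$ uses $\chi=\x1_{\cB(x_b,R_b)}$ together with the formula derived in the preceding proof.

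For the density-type convergence, $\|\x1_{\cB(x_k,R_k)}-\x1_{\cB(x_b,R_b)}\|_{\xLn p}^p = |\cB(x_k,R_k)\,\triangle\,\cB(x_b,R_b)|$. This is bounded by $C(|x_k-x_b|+|R_k-R_b|)$ uniformly in $t$, which gives strong convergence in $\xCzero([0,T];\xLn p(\xR^3))$. Weak-* convergence in $\xLinfty$ follows from this and the uniform bound $\|\chi_k\|_{\xLinfty}\le 1$. Finally, passing to the limit in the weak form $\int\chi_k(\partial_t\phi+\tilde\Pi_ku_k\cdot\nabla\phi + \xdiv(\tilde\Pi_ku_k)\phi)=0$ is direct from these strong convergences. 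Alternatively, one can invoke Proposition~\ref{prop:smlrt-prop:existence_regular_velocity} directly, since $\chi$ is the indicator transported by $\tilde\eta$. In either case $R_b\ge R_0/2$.
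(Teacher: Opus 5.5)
Your proof is correct and follows the paper's argument. You reduce to the ODE $\dot X_k = f_k(t,X_k)$ for $X_k=(x_k,R_k)$, use $R_k\ge R_0/2$ for uniform control, show $f_k\to f$ uniformly, conclude by ODE stability, and then read off the convergences of $\chi_k$, $\tilde{\Pi}_k u_k$ and $\tilde{\eta}_k$. The only variation is in the stability step: you use Arzel\`a--Ascoli plus Cauchy--Lipschitz uniqueness for the limit problem, whereas the paper uses a Gronwall-type argument based on the $f_k$ being uniformly Lipschitz. Your version gives the limit trajectory on all of $[0,T]$ with $R_b\ge R_0/2$ without a continuation step, but loses the quantitative rate $\|X_k-X\|_{\xCzero}\lesssim\|u_k-u\|_{\xLinfty}$.
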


\begin{proof}[Proof of Propostion~\ref{prop:smlrt-prop:strong_sequential_continuity}]
This proposition is equivalent to the stability of the family of ODEs 
\begin{equation*}
\dot {X}_k = f_k(t, X_k) = (f_{x, k}(t, X_k),  f_{R, k}(t, X_k)) \; \mbox{with} \; X_k(0) = (x_0, R_0), 
\end{equation*} 
where for $t \in [0, T]$ and $X  \in \xR^3 \times [R_0/2, \infty)$,  
\begin{equation}
\left\{
\begin{aligned} 
& f_{x, k} (t, X) = \frac{3}{4 \pi R_0^3} \displaystyle \int_{\cB_0} \x1_\O(\tilde {\eta}_X[t](y)) u_k (t, \tilde {\eta}_X[t](y)) \xdif y, \\ 
& f_{R, k} (t, X) = \frac{5}{4 \pi R_0^4} \displaystyle \int_{\cB_0} \x1_\O(\tilde {\eta}_X[t](y)) (y - x_0) \cdot u_k (t, \tilde {\eta}_X[t](y)) \xdif y. 
\end{aligned}
\right.  
\label{eq:smlrt-prop:fxk_fRk_ODE_definition}
\end{equation} 
and $\tilde {\eta}_X$ is defined in \eqref{eq:smlrt-prop:tetaX_definition}. From \eqref{eq:smlrt-prop:M1_Lipschitz}, \eqref{eq:smlrt-prop:M2_Lipschitz} and \eqref{eq:smlrt-prop:fR_Lipschitz} we get, for all $k \in \xN$, $t \in [0, T]$,  $X_1, X_2 \in \xR^3 \times [R_0/2, \infty)$,   
\begin{equation*}
|f_k(t, X_1) - f_k(t, X_2)| \lesssim \| u_k\|_{\xLinfty(0, T; \xWn{1, \infty}(\O))} \left|X_1-X_2\right|, 
\end{equation*}
where the hidden constant is independent of $k$, $t$ and $X_1$, $X_2$. By \eqref{eq:smlrt-prop:hypothesis_strong_seq_conv_uk}, the sequence $\{u_k\}$ is uniformly bounded in $\xLinfty(0, T; \xWn{1, \infty}(\O))$. Consequently, the $f_k$ are uniformly Lipschitz with respect to $X$ in $\xR^3 \times [R_0/2, \infty)$. Moreover for $t \in [0, T]$ and $X \in \xR^3 \times [R_0/2, \infty)$, 
\begin{equation*}
|f_k(t, X) - f(t, X) | \lesssim \|u_k(t, \cdot) - u(t, \cdot) \|_{\xLinfty(\O)}.
\end{equation*} 
We deduce that $f_k \rightarrow f$ uniformly on $[0, T] \times \xR^3 \times [R_0/2, \infty)$. The proof then follows by direct application of standard stability results on ODEs.
\end{proof}

\begin{prpstn}[Weak sequential continuity]
Let $\{u_k, \chi_k\} \in \xLtwo(0, T; \xHone(\O)) \times \xLinfty((0, T) \times \xR^3)$ such that 
\begin{align}
\star & \; u_k \rightarrow u \; \mbox{weakly in} \; \xLtwo(0, T; \xHone(\O)), \label{eq:smlrt-prop:hypothesis_weak_seq_continuity_1} \\ 
\star & \; \partial_t \chi_k + \tilde {\Pi}_k u_k \cdot \nabla \chi_k
 = 0 \; \mbox {in} \; \cD((0, T) \times \xR^3), \notag \\
& \; \chi_k \vert_{t = 0} = \x1_{\cB_0} \; \mbox{a.e. in} \; \xR^3, \; \cB_0 = \cB(x_0, R_0), \; x_0 \in \xR^3, \;  R_0 \in (0, \infty), \label{eq:smlrt-prop:hypothesis_weak_seq_continuity_2} \\ 
\star & \; \chi_k = \x1_{\cB(x_k, R_k)}, \;  R_k \geq \frac{R_0}{2}. \label{eq:smlrt-prop:hypothesis_weak_seq_continuity_3}
\end{align}
Then, up to a subsequence, 
\begin{align*}
& \chi_k \rightarrow \chi \; \mbox{weakly--* in} \; \xLinfty((0, T) \times \xR^3), \; \mbox{strongly in} \; \xCzero([0, T]; \xLn{p}_{\xloc}(\xR^3))
\end{align*}
for all $1 \leq p < \infty$ and $(u, \chi)$ is solution of 
\begin{equation*}
\partial_t \chi + \tilde {\Pi} u \cdot \nabla \chi = 0 \; \mbox{in} \; (0, T) \times \xR^3, \quad \chi \vert_{t = 0} = \x1_{\cB_0}.
\end{equation*}
Moreover, if $\tilde {\eta}_k$ and  $\tilde {\eta}$ denote the propagators associated to $\tilde {\Pi}_k u_k$ and $\tilde {\Pi} u$, respectively, then the following additional convergences hold:
\begin{align*}
& \tilde {\Pi}_k u_k \rightarrow \tilde {\Pi} u \; \mbox{weakly in} \; \xLtwo(0, T; \cS), \\
& \tilde {\eta}_k \rightarrow \tilde {\eta} \; \mbox{weakly in} \; \xHone([0, T];  \cM), \; \mbox{strongly in} \; \xCzero([0, T]; \cM). 
\end{align*}
In particular, for $x_b, \, R_b \in \xHone([0, T])$ satisfying
\begin{equation*}
\chi  = \x1_{\cB (x_b, R_b)},
\end{equation*}
it holds $R_b(t) \geq R_0/2$ for all $t \in [0, T]$. 
\label{prop:smlrt-prop:weak_sequential_continuity}
\end{prpstn}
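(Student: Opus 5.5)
As in the strong case, we reduce everything to the finite-dimensional parameters $X_k=(x_k,R_k)$. For each $k$, the transport equation with velocity $\tilde{\Pi}_k u_k\in \xLtwo(0,T;\cS)$ is solved by characteristics: $\chi_k(t)=\x1_{\tilde{\eta}_k[t](\cB_0)}$ with $\tilde{\eta}_k[t](y)=x_k(t)+\frac{R_k(t)}{R_0}(y-x_0)$. The parameters satisfy $\dot x_k=V_k$ and $\dot R_k=\frac{\Lambda_k}{3}R_k$. Here $V_k$ and $\Lambda_k$ are the coefficients of $\tilde{\Pi}_k u_k$, namely $V_k=\frac{3}{4\pi R_k^3}\int \chi_k u_k$ and $\frac{\Lambda_k}{3}=\frac{5}{4\pi R_k^5}\int\chi_k(y-x_k)\cdot u_k$.

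\textbf{Uniform bounds.} Using $R_k\ge R_0/2$ and Cauchy--Schwarz, we get
\[
|\dot x_k|\lesssim R_0^{-3/2}\|u_k(t)\|_{\xLtwo(\O)},\qquad |\dot R_k|\lesssim R_0^{-3/2}\|u_k(t)\|_{\xLtwo(\O)},
\]
since $R_k^{-5}\,|\cB_k|^{1/2}R_k\cdot R_k\sim R_k^{-3/2}$. Hence $X_k$ is bounded in $\xHone(0,T)$, with $X_k(0)=(x_0,R_0)$. The upper bound on $R_k$ also follows, so $\tilde{\eta}_k$ is bounded in $\xHone([0,T];\cM)$. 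By Rellich, after extracting a subsequence, $X_k\to X=(x_b,R_b)$ weakly in $\xHone$ and uniformly on $[0,T]$, and $R_b\ge R_0/2$ passes to the limit. Set $\chi=\x1_{\cB(x_b,R_b)}$. The measure of the symmetric difference $\cB(x_k,R_k)\triangle\cB(x_b,R_b)$ is bounded by $C(|x_k-x_b|+|R_k-R_b|)$ uniformly in $t$, and this gives $\chi_k\to\chi$ in $\xCzero([0,T];\xLn{p}_{\xloc}(\xR^3))$ for every $p<\infty$, as well as weakly-$*$ in $\xLinfty$.

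\textbf{Identification of the limit velocity.} This is the main step. We need the products $\int\chi_k u_k$ and $\int\chi_k(y-x_k)\cdot u_k$ to converge weakly in $\xLtwo(0,T)$ (or in distributions) to the same quantities built from $\chi$ and $u$. I would write
\[
\int_0^T\theta(t)\int_\O \chi_k u_k\xdif y\xdif t=\int_0^T\!\!\int_\O \theta\chi u_k+\int_0^T\!\!\int_\O\theta(\chi_k-\chi)u_k .
\]
The first term converges by the weak convergence of $u_k$ in $\xLtwo(0,T;\xLtwo)$, since $\theta\chi\in\xLtwo$. The second term is bounded by $\|\theta\|_{\xLinfty}\,\sup_t\|\chi_k-\chi\|_{\xLtwo(\O)}\,\|u_k\|_{\xLtwo\xLtwo}$, which tends to $0$ by the strong convergence already obtained. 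No compactness in time of $u_k$ is needed, because the strong convergence of $\chi_k$ is uniform in time. The same argument handles the moment term, using the uniform convergence of $x_k$, and the prefactors $R_k^{-3}$ and $R_k^{-5}$ converge uniformly. Therefore $\tilde{\Pi}_ku_k\rightharpoonup\tilde{\Pi}u$ weakly in $\xLtwo(0,T;\cS)$, where $\tilde\Pi$ is defined with $\chi$.

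\textbf{Passing to the limit in the ODE.} Passing to the limit in $\dot x_k=V_k$ and in $\dot R_k=\frac{\Lambda_k}{3}R_k$ (a weak-times-strong product) shows that $X$ solves the limit ODE with velocity $\tilde{\Pi}u$. So $\tilde{\eta}$ is the propagator of $\tilde\Pi u$, and $\chi(t)=\x1_{\tilde\eta[t](\cB_0)}$. The transport equation for $\chi$ then follows either from this characteristic representation or by passing to the limit in the weak formulation,
\[
\int\chi_k\partial_t\phi+\chi_k\,\tilde\Pi_ku_k\cdot\nabla\phi+\chi_k\,\Lambda_k\phi=0,
\]
using strong convergence of $\chi_k$ against weak convergence of $\tilde\Pi_k u_k$. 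Finally, the weak $\xHone$ convergence of $\tilde{\eta}_k$ and its strong $\xCzero$ convergence in $\cM$ follow from the convergence of $X_k$ in $\xHone$ and in $\xCzero$.
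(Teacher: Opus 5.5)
Your proof is correct, but it follows a different route from the paper's.

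\textbf{The paper's route.} The paper works at the level of the transport equation. It extracts a weak limit of $\tilde{\Pi}_k u_k$ in $L^2(0,T;\mathcal{S})$. It then appeals to DiPerna--Lions theory to get $\chi_k \to \chi$ strongly in $C^0([0,T];L^p_{\rm loc}(\mathbb{R}^3))$, with $\chi$ transported by that limit field. Next it identifies the limit field as $\tilde{\Pi} u$ through the same weak--strong product you use. Finally it asserts that the convergence of the propagators ``follows easily''.

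\textbf{Your route.} You use from the outset the hypothesis that $\chi_k$ is the indicator of $\mathcal{B}(x_k,R_k)$, and work only with $X_k=(x_k,R_k)$. The Cauchy--Schwarz bounds $|\dot x_k|+|\dot R_k|\lesssim R_0^{-3/2}\|u_k(t)\|_{L^2}$ give a uniform $H^1(0,T)$ bound. The compact embedding $H^1(0,T)\subset C^0([0,T])$ then yields all of the following at once:
\begin{itemize}
\item the strong convergence of $\chi_k$, via the symmetric-difference estimate, even in $C^0([0,T];L^p(\mathbb{R}^3))$ globally;
\item the weak $H^1$ and strong $C^0$ convergence of $\tilde{\eta}_k$;
\item the fact that the limit is again a ball indicator;
\item the bound $R_b\ge R_0/2$.
\end{itemize}
The limit ODE, and hence the limit transport equation, then follows from weak--strong products. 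This is essentially the weak-convergence counterpart of the ODE argument the paper uses for strong sequential continuity.

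\textbf{What each buys.} Your approach gives a self-contained, elementary proof. It actually establishes the propagator convergence that the paper leaves implicit. It also avoids invoking DiPerna--Lions stability with only weakly converging velocities, which needs time compactness of $\chi_k$ from the equation plus the renormalization $\chi_k^2=\chi_k$ to upgrade weak to strong convergence. The paper's route, in exchange, does not use the explicit ball parametrization for compactness, in line with the fluid--solid literature it is modeled on.

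\textbf{One step to state explicitly.} The hypotheses give no a priori time regularity of $x_k,R_k$, so you should say how you derive $\dot x_k=V_k$ and $\dot R_k=\frac{\Lambda_k}{3}R_k$. Either use uniqueness of bounded weak solutions for the spatially affine field $\tilde{\Pi}_k u_k$, whose coefficients are $L^2$ in time. Or test the equation against $1$ and $y$, which gives $\frac{d}{dt}|\mathcal{B}_k|=\Lambda_k|\mathcal{B}_k|$ and $\frac{d}{dt}\int\chi_k y=|\mathcal{B}_k|(V_k+\Lambda_k x_k)$.
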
 

\begin{proof}[Proof of propostion~\ref{prop:smlrt-prop:weak_sequential_continuity}]
Since the sequence $\{u_k\}$ is bounded in $\xLtwo(0, T; \xHone(\O))$, the sequence $\{\tilde {\Pi}_k u_k\}$ is bounded in $\xLtwo(0, T; \cS)$  and, up to a subsequence, we have 
\begin{equation*}
\tilde {\Pi}_k u_k \rightarrow \overline {\tilde {\Pi} u} \; \mbox{weakly in} \;  \xLtwo(0, T; \cS).
\end{equation*}
Since $\chi_k(0)$ converges strongly in $\xLone(\xR^3)$ (= $\x1_{\cB_0}$ for all $k \in \xN$) and $\{\chi_k\}$ is bounded in $\xLinfty((0, T) \times \O)$, applying Di Perna-Lions theory, we get that $\chi_k$ converges weakly--* in $\xLinfty((0, T) \times \xR^3)$ and strongly in $\xCzero([0, T]; \xLn{p}_{\xloc}(\xR^3))$ for all $ 1 \leq  p  < \infty $. Its limit $\overline {\chi}$ satisfies 
\[ \partial_t \overline {\chi} + \overline {\tilde {\Pi} u} \cdot \nabla \overline {\chi} = 0. \]
It follows from the convergence of $\chi_k$  and \eqref{eq:smlrt-prop:hypothesis_weak_seq_continuity_1} that, up to a subsequence, $\tilde {\Pi}_k u_k \rightarrow \overline {\tilde {\Pi}} u$ weakly in $\xLtwo(0, T; \cS)$, where $\overline {\tilde {\Pi}}$ is defined as in \eqref{eq:appx_sol:tPibn_definition} replacing $\chi_n$ by $\overline {\chi}$. In particular, $(\overline {\chi}, \overline {\tilde {\Pi}} u)$ is a solution to our transport equation. The convergence of the propagators $\tilde {\eta}_k$ then  follows from the convergence of $\chi_k$  easily. 
\end{proof}
A straightforward consequence of Proposition~\ref{prop:smlrt-prop:weak_sequential_continuity} is the following corollary valid for sequences $\{u_k\}$ satisfying $\tilde {\Pi}_k u_k = u_k.$ 
\begin{crllr}
Let $\{u_k\}$ a bounded sequence of $\xLtwo(0, T; \xHone(\O))$ and $\{\cB_k\}$  given for all $t \in [0, T]$ by $\cB_k(t) = \eta_k[t] (\cB_0)$ with $\eta_k \in \xHone([0, T]; \cM)$ and $\cB_0$ a ball of radius $R_0 \in (0, \infty)$ and center $x_0 \in \xR^3$. Assume that for each $k \in \xN$, $u_k$ is compatible with the system $\{\overline {\cB_k}, \eta_k\}$ in the sense of Remark~\ref{remark:intro:compatibility_definition} and satisfy 
\begin{equation*}
u_k \rightarrow u \;\mbox{weakly in} \; \xLtwo(0, T; \xHone(\O)).  
\end{equation*}
Then, passing to a subsequence as the case may be, we have 
\begin{equation*}
\eta_k \rightarrow \eta \; \mbox{weakly in} \; \xHone([0, T]; \cM), \; \mbox{strongly in} \; \xCzero([0, T]; \cM). 
\end{equation*} 
Moreover, defining $\cB(t)$ for all $t \in [0, T]$ by $$\cB(t) = \eta[t](\cB_0), $$ the velocity field $u$ is compatible with the system $\{\overline {\cB}, \eta\}$.
\label{coro:smlrt_prop:stability_compatible_velocities}
\end{crllr}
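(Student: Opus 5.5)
The idea is to reduce the corollary to Proposition~\ref{prop:smlrt-prop:weak_sequential_continuity}, adding one extra argument that recovers the rotational part of the motion, which $\tilde{\Pi}$ does not see.

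\textbf{Step 1: the color functions satisfy the hypotheses of Proposition~\ref{prop:smlrt-prop:weak_sequential_continuity}.} Set $\chi_k = \x1_{\cB_k}$. Compatibility says that on $Q_k$ the velocity has the form $u_k = V_k + \omega_k\times(x-x_k) + \frac{\Lambda_k}{3}(x-x_k)$. The projection $\tilde{\Pi}_k$ (defined with $\chi_k$) recovers exactly $V_k + \frac{\Lambda_k}{3}(x-x_k)$, because:
\begin{itemize}
\item the rotational term integrates to zero against both $1$ and $(y-x_k)$ over a ball, and the dilation term integrates to zero against $1$;
\item the normalisations in \eqref{eq:appx_sol:tPibn_definition} are precisely $|\cB_k|^{-1}$ and $\big(\int_{\cB_k}|y-x_k|^2\big)^{-1}$.
\end{itemize}
Since $\eta_k[t]$ maps $\cB_0$ to $\cB(x_k,R_k)$ with $\dot x_k=V_k$ and $\dot R_k=\frac{\Lambda_k}{3}R_k$, the function $\chi_k$ solves the transport equation with field $\tilde{\Pi}_k u_k$. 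The rotation in $\eta_k$ leaves the ball invariant, so it plays no role here.

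The assumption $R_k\ge R_0/2$ is not given directly. I would obtain it, possibly on a shorter interval, from the bound in the proof of Proposition~\ref{prop:smlrt-prop:existence_regular_velocity}:
\[ |\dot R_k| \lesssim R_0^{-3/2}\|u_k(t)\|_{\xLtwo}, \]
together with the uniform $\xLtwo(0,T;\xHone)$ bound. Alternatively, one can argue that the proof of Proposition~\ref{prop:smlrt-prop:weak_sequential_continuity} only needs a uniform positive lower bound on $R_k$, which follows from the $\eta_k$ being bounded in $\xHone([0,T];\cM)$. Proposition~\ref{prop:smlrt-prop:weak_sequential_continuity} then gives strong $\xCzero$ convergence of $(x_k,R_k)$, weak $\xHone$ convergence, and $\chi=\x1_{\cB(x_b,R_b)}$ with limiting kinematics $\tilde{\Pi}u$.

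\textbf{Step 2: the rotational part.} Write $\eta_k[t]\hat x = x_k + \frac{R_k}{R_0}\xO_k(\hat x - x_0)$. The angular velocity satisfies $\omega_k = \frac{15}{8\pi R_k^5}\int \chi_k (y-x_k)\times u_k$. This is bounded in $\xLtwo(0,T)$ because $R_k$ is bounded below and $u_k$ is bounded in $\xLtwo(0,T;\xLtwo)$. Hence $\dot{\xO}_k=\omega_k\times \xO_k$ is bounded in $\xLtwo$, while $\xO_k$ stays in the compact group $\xSO(3)$. So $\eta_k$ is bounded in $\xHone([0,T];\cM)$, and by Arzel\`a--Ascoli a subsequence converges weakly in $\xHone$ and strongly in $\xCzero$. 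The limit $\xO$ stays in $\xSO(3)$, which is closed.

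Next I identify the limit. Using the strong convergence of $\chi_k$, $x_k$, $R_k$ and the weak convergence of $u_k$, we get $\omega_k\rightharpoonup\omega := \frac{15}{8\pi R^5}\int\chi(y-x)\times u$ weakly in $\xLtwo(0,T)$. The product $\omega_k\times\xO_k$ is weak times strong, so it converges weakly, and therefore $\dot\xO=\omega\times\xO$.

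\textbf{Step 3: compatibility of $u$.} Take a test field $\psi\in\xLtwo(0,T;\xLtwo)$, and note that $\chi_k\psi\to\chi\psi$ strongly by dominated convergence. Pass to the limit in
\[ \int\!\!\int \chi_k\big(u_k - V_k - \omega_k\times(x-x_k) - \tfrac{\Lambda_k}{3}(x-x_k)\big)\cdot\psi = 0. \]
Each bracketed term converges weakly, so the identity holds for the limit quantities. This shows that $u = \partial_t\eta\circ\eta^{-1}$ a.e. on $Q$, i.e. $u$ is compatible with $\{\overline{\cB},\eta\}$.

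The main obstacle is Step 2 together with the lower bound on $R_k$. Proposition~\ref{prop:smlrt-prop:weak_sequential_continuity} only handles the translation and dilation modes, so the rotation must be controlled and its nonlinear ODE passed to the limit separately. The weak $\times$ strong structure is what makes this limit possible.
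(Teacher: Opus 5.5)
The genuine gap is the uniform lower bound on $R_k$, on which everything else in your argument rests. It is a hypothesis of Proposition~\ref{prop:smlrt-prop:weak_sequential_continuity}. It is what bounds $\omega_k$ in $\xLtwo(0,T)$ in your Step~2. It is also needed to pass to the limit in $R_k^{-5}$ and in $\Lambda_k=3\dot R_k/R_k$.

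Neither of your two routes provides it on $[0,T]$:
\begin{itemize}
\item The first route only works on an interval whose length depends on $\sup_k\|u_k\|_{\xLtwo(0,T;\xHone(\O))}$, so it proves a weaker statement. Moreover, the estimate you borrow from the proof of Proposition~\ref{prop:smlrt-prop:existence_regular_velocity} really holds with $R_k^{-3/2}$ instead of $R_0^{-3/2}$, because of the change of variables through $\tilde{\eta}_k$. It must therefore be closed by a bootstrap.
\item The second route is circular, since your Step~2 derives the $\xHone([0,T];\cM)$ bound on $\eta_k$ from the lower bound on $R_k$. Such a bound is also not among the hypotheses. It would not suffice anyway: a uniform $\xWn{1,\infty}(0,T)$ bound on $R_k$ does not keep $R_k$ away from $0$ (take $R_k$ decreasing linearly from $R_0$ to $R_0/k$).
\end{itemize}

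In fact the lower bound cannot be derived from the stated hypotheses at all. Assume $\cB(x_0,2R_0)\subset\O$ and take $x_k\equiv x_0$, $R_k$ as above, and $\omega_k=R_k^{-3/2}e_3$. Let $u_k$ equal $\omega_k\times(x-x_0)+\frac{\dot R_k}{R_k}(x-x_0)$ in $\cB_k(t)$, cut off smoothly in $\cB(x_0,2R_k(t))$. Then $\|\nabla u_k(t)\|_{\xLtwo(\O)}^2\lesssim|\omega_k|^2R_k^3+|\dot R_k|^2R_k\lesssim 1$. So $\{u_k\}$ is bounded in $\xLtwo(0,T;\xHone(\O))$ and compatible with $\{\overline{\cB_k},\eta_k\}$. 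However, $\int_0^T R_k^2|\omega_k|^2\,\mathrm{d}t=\int_0^T R_k^{-1}\,\mathrm{d}t\sim\log k$. Hence $\eta_k$ is unbounded in $\xHone([0,T];\cM)$ and no subsequence converges weakly there.

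So $R_k\geq R_0/2$, or some uniform positive bound, must be added as an assumption. The paper's one-line proof imports it tacitly through Proposition~\ref{prop:smlrt-prop:weak_sequential_continuity}. It does hold wherever the corollary is used: $R_\varepsilon,R_\delta\geq R_0/2$ in Sections~\ref{subsec:vnsh_dspt_continuity_equation} and~\ref{sec:proof_main_result}.

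Once it is assumed, the rest of your proposal is correct and in fact more complete than the paper's argument. Proposition~\ref{prop:smlrt-prop:weak_sequential_continuity} only controls $\tilde{\Pi}_k u_k$ and $\tilde{\eta}_k$, i.e.\ translation and dilation. The convergence of the rotational part $\xO_k$ and the identification $u=(\partial_t\eta)\circ\eta^{-1}$ on $Q$ therefore genuinely require your Steps~2 and~3, which the paper's ``direct consequence'' leaves out.
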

\begin{proof}
This proposition is a direct consequence of Proposition~\ref{prop:smlrt-prop:weak_sequential_continuity} taking $\chi_k = \x1_{\cB_k}$.
\end{proof}

The last proposition deals with how densities propagate along the characteristics of the bubble motion. Its formulation and its proof is inspired of \cite[Lemme 3.2]{Feireisl2003ARM} in which the case of rigid solids is treated.                                                                                                                                                                                                                                                                                                                                                                                                                                                                                                                                                                                                                                                                                                                                                                                                                                                                                                                                                                                                                                                                                                                                                                                                                                                                                                                                                                                                                                                                                                                                                                                                                                                                                                                                                                                                                                                                                           
\begin{prpstn}
Let $\gamma > 1$, $\rho \in \xLinfty (0, T; \xLn\gamma (\O))$ and $u \in \xLtwo(0, T; \xHone(\O))$ such that $(\rho, u)$ satisfy the continuity equation \eqref{eq:intro:rho_continuity_equation_weak} for all $\varphi \in \cD((0, T) \times \xR^3).$ Let us suppose moreover that $u$ is compatible with the system $\{ \overline {\cB}, \eta \}$ and there exists $\rho_{b, 0} \in (0, \infty)$ such that
\begin{equation*}
\rho_0 = \rho_{b, 0} \; \mbox{in} \; \cB_0. 
\end{equation*} 
Then 
\begin{equation*}
|\cB(t)| \rho (t, \eta[t](x)) = |\cB_0| \rho_{b, 0} \; \mbox{for all} \;  t \in [0, T] \; \mbox{and a.e.} \; x \in \cB_0.
\end{equation*}
\label{lem:smlrt-prop:density_bubble}
\end{prpstn}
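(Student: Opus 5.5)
We plan to follow the strategy of \cite[Lemme 3.2]{Feireisl2003ARM}: we transport the continuity equation to Lagrangian coordinates attached to the bubble through the propagator $\eta$, and exploit the fact that inside $\cB(t)$ the velocity is the explicit affine field $u(t,x) = V(t) + \omega(t)\times(x-x_b(t)) + \frac{\Lambda(t)}{3}(x-x_b(t))$, whose divergence is $\Lambda(t)$. Since $\eta \in \xHone([0,T];\cM)$, we have $\eta[t](\hat x) = x_b(t) + \frac{R_b(t)}{R_0}\xO(t)(\hat x - x_0)$ with $x_b, R_b, \xO$ absolutely continuous. Compatibility gives $\partial_t \eta[t](\hat x) = u(t,\eta[t](\hat x))$ for a.e. $t$ and $\hat x\in\cB_0$. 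Moreover, $\det \nabla \eta[t] = (R_b(t)/R_0)^3 = |\cB(t)|/|\cB_0|$ and $\frac{\xdif}{\xdif t}\log (R_b/R_0)^3 = \Lambda$. The claim is therefore equivalent to $\hat\rho(t,\hat x) := \rho(t,\eta[t](\hat x))\,\det\nabla\eta[t]$ being constant in time and equal to $\rho_{b,0}$ on $\cB_0$.

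\textbf{Step 1.} Take $\psi \in \cD(\cB_0)$ and $\theta\in\cD((0,T))$, and define the test function $\varphi(t,x) = \theta(t)\,\psi(\eta[t]^{-1}(x))$. This function is supported in $\overline{Q}$ (compactly inside each $\cB(t)$). It is only Lipschitz/$\xWn{1,\infty}$ in $x$ and $\xHone$ in $t$, so we first argue by density: the weak continuity equation extends to test functions in $\xWn{1,1}$ with compact support in $(0,T)\times\O$, with $\partial_t\varphi\in\xLtwo(0,T;\xLinfty)$ and $\nabla\varphi\in\xLinfty$. This extension is legitimate because $\rho\in\xLinfty(\xLn\gamma)$ and $\rho u\in \xLtwo(\xLn{q})$ for some $q>1$ by Sobolev embedding, so one can mollify $\varphi$ and pass to the limit. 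The extension is also helped by the fact that, inside the support, $u$ is an affine field bounded by $|V|+|\omega|+|\Lambda|\in\xLtwo(0,T)$.

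\textbf{Step 2.} Compute the transport derivative. Since $\varphi$ is constant along characteristics of $u$ inside $\cB(t)$, we get $\partial_t\varphi + u\cdot\nabla\varphi = \theta'(t)\,\psi(\eta[t]^{-1}(x))$ on the support. The continuity equation then yields
\[
\int_0^T \theta'(t)\int_{\cB(t)}\rho(t,x)\psi(\eta[t]^{-1}(x))\xdif x\xdif t = 0 .
\]
Changing variables $x=\eta[t](\hat x)$, this reads $\int_0^T\theta'(t)\int_{\cB_0}\hat\rho(t,\hat x)\psi(\hat x)\xdif\hat x\xdif t=0$. Hence $t\mapsto\int_{\cB_0}\hat\rho(t)\psi$ is constant a.e. on $(0,T)$.

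\textbf{Step 3.} Identify the constant and conclude. From the weak continuity equation, $\rho\in\xCzero([0,T];\xLn\gamma_{\xweak}(\O))$, and together with $\eta\in\xCzero([0,T];\cM)$ this gives weak continuity of $t\mapsto\hat\rho(t)$ in $\xLn\gamma(\cB_0)$. Extending the identity to all $t\in[0,T]$ and using the initial condition $\rho(0)=\rho_0=\rho_{b,0}$ on $\cB_0$, we obtain $\int_{\cB_0}\hat\rho(t)\psi=\rho_{b,0}\int_{\cB_0}\psi$ for every $\psi$. Therefore $\hat\rho(t,\cdot)=\rho_{b,0}$ a.e. on $\cB_0$, and multiplying by $|\cB_0|$ gives the statement, since $\det\nabla\eta[t]\,|\cB_0| = |\cB(t)|$.

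We expect the main obstacle to be Step 1. The test function has limited time regularity, since $\eta$ is only $\xHone$ in time. One must therefore check carefully that the weak formulation still holds and that the chain rule $\partial_t[\psi\circ\eta[t]^{-1}] = -\nabla(\psi\circ\eta^{-1})\cdot u$ is valid a.e. This is where the affine structure of $\cM$ is essential: $\eta[t]^{-1}$ is explicit and its derivatives are controlled by $|\dot x_b|+|\dot R_b|+|\dot\xO|\in\xLtwo(0,T)$.
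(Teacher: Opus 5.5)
Your proposal is correct and follows essentially the same route as the paper. The paper simply invokes the argument of \cite[Lemme 3.2]{Feireisl2003ARM} to get $\int_{\cU}\rho(t,\eta[t](y))\frac{|\cB(t)|}{|\cB_0|}\xdif y=\int_{\cU}\rho_{b,0}\xdif y$ for every ball $\cU$, which is exactly what you obtain by testing the continuity equation with functions transported by the bubble motion, pulling back to $\cB_0$ with Jacobian $|\cB(t)|/|\cB_0|$, and using the initial datum. You use smooth $\psi\in\cD(\cB_0)$ instead of balls, and you explicitly justify the $\xHone$-in-time test function and the weak continuity in time; these are precisely the details the paper leaves implicit.
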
 

\begin{proof} 
Following the same arguments as in \cite[Lemme 3.2]{Feireisl2003ARM}, we obtain
\begin{equation*}
\int_{\cU} \rho(t, \eta[t](y)) \frac{|\cB(t)|}{|\cB_0|} \xdif y = \int_{\cU} \rho_{b, 0} \xdif y
\end{equation*} 
for any open ball $\cU$. It follows that 
\begin{equation*}
|\cB(t)| \rho(t, \eta[t](x)) =  |\cB_0| \rho_{b, 0} \; \mbox{for all} \; t \in [0, T] \;  \mbox{and a.e.} \; x \in \cB_0.
\end{equation*}
\end{proof}

\section{Existence proofs of the approximate solutions} \label{sec:existence_proof_approximate_solutions}
In this section, we successively prove Proposition~\ref{prop:app-sol:existence_Faedo_Galerkin_solution}, \ref{prop:appx_sol:existence_n_solution}, \ref{prop:appx_sol:existence_epsilon_level} and \ref{prop:appx_sol:existence_delta_level}. The main contribution with respect to fluid-solid interaction is the convergence analysis of the nonlinear terms arising in the bubble domain, specifically the convective term $\rho u \otimes u : \xdiv (\varphi) $ and the pressure term $p(\rho, \x1_\cB) \xdiv (\varphi)$ appearing in the momentum equation \eqref{eq:intro:rhou_momentum_equation_weak}. These terms vanish naturally for rigid-body motion as $\xdiv (\varphi) = 0$ in the solid domain. 

\subsection{Existence proof of the Faedo-Galerkin approximation}\label{subsec:existence_proof_faedo-glrk_approximation}
In this subsection, we construct a solution $(\cB_N, \rho_N, u_N)$ to the problem \eqref{eq:appx_sol:chibN_regularity}-\eqref{eq:appx_sol:muN_nuN_definition}. The strategy consists in reformulating the Galerkin problem as a fixed point problem. More precisely, we will look for $u_N$ as a fixed point of the operator 
\begin{equation*}
\cF_N : 
\left\{
\begin{aligned} 
&B_{Q, T} \mapsto B_{Q, T} \\
& u \mapsto \tilde {u} 
\end{aligned}
\right. ,
\end{equation*} 
where
\begin{equation*}
B_{Q, T} = \{u \in \xCzero([0, T]; X_N), \quad \| u\|_{\xLinfty(0, T; \xLtwo(\O))} \leq Q \},  
\end{equation*} 
for some constants $Q, \, T \in (0, \infty)$. This approach first yields a solution on a time interval $[0, T]$, with $T$ possibly depending on $N$. We will then prove that we can extend this solution to an arbitrary time $\tilde {T}$ independent of $N$. To prove the existence of a fixed point for $\cF_N$, we will apply Schauder's fixed point theorem (for the proof see, \eg, \cite{Evans2022PDE}).
\begin{thrm}[Schauder's fixed point theorem] Let $X$ be a real Banach space and $K \subset X$ a closed, bounded and convex subset. Assume that
\begin{equation*}
\cF : K \rightarrow K
\end{equation*}
is continuous and compact. Then $\cF$ has a fixed point in $K$.
\label{eq:faedo-glrk:schauder-fixed-point}
\end{thrm}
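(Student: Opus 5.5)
The plan is the classical reduction of Schauder's theorem to Brouwer's fixed point theorem, using finite-dimensional approximations of $\cF$ built from \emph{Schauder projections}. Here ``compact'' means that $\cF(K)$ is relatively compact in $X$. Since $\cF(K) \subset K$ and $K$ is closed, the closure $\overline{\cF(K)}$ is a compact subset of $K$.

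First, fix $\varepsilon > 0$. By compactness, choose a finite $\varepsilon$-net $\{y_1, \dots, y_m\} \subset \cF(K)$ of $\overline{\cF(K)}$. Define the continuous weights $\mu_i(y) = \max\{0, \varepsilon - \|y - y_i\|\}$. For every $y \in \overline{\cF(K)}$ at least one weight is positive, so the map
\begin{equation*}
P_\varepsilon(y) = \frac{\sum_{i=1}^m \mu_i(y)\, y_i}{\sum_{i=1}^m \mu_i(y)}
\end{equation*}
is well defined and continuous on $\overline{\cF(K)}$. It satisfies $\|P_\varepsilon(y) - y\| < \varepsilon$, because $P_\varepsilon(y)$ is a convex combination of points $y_i$ with $\|y_i - y\| < \varepsilon$. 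Let $K_\varepsilon = \mathrm{conv}\{y_1, \dots, y_m\}$. By convexity, $K_\varepsilon \subset K$, and $K_\varepsilon$ is a compact convex subset of the finite-dimensional space $\xspan\{y_1, \dots, y_m\}$.

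Second, consider $\cF_\varepsilon = P_\varepsilon \circ \cF : K_\varepsilon \to K_\varepsilon$, which is continuous. Apply Brouwer's fixed point theorem to get $x_\varepsilon \in K_\varepsilon$ with $\cF_\varepsilon(x_\varepsilon) = x_\varepsilon$. This needs the version of Brouwer for an arbitrary compact convex subset of $\xR^d$, which I take as known. If needed, it reduces to the closed ball: a nonempty compact convex set is a retract of any closed ball containing it, via the nearest-point projection in a Euclidean structure, so a fixed point of the composed map on the ball lies in the set and is a fixed point there. The fixed point satisfies $\|\cF(x_\varepsilon) - x_\varepsilon\| = \|\cF(x_\varepsilon) - P_\varepsilon(\cF(x_\varepsilon))\| < \varepsilon$.

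Third, take $\varepsilon = 1/k$. The points $\cF(x_{1/k})$ lie in the compact set $\overline{\cF(K)}$, so a subsequence converges to some $x \in \overline{\cF(K)} \subset K$. Since $\|x_{1/k} - \cF(x_{1/k})\| < 1/k$, we also get $x_{1/k} \to x$. Continuity of $\cF$ then gives $\cF(x) = \lim \cF(x_{1/k}) = x$, which is the required fixed point.

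The only delicate point is the finite-dimensional step, i.e.\ having Brouwer's theorem available for $K_\varepsilon$ rather than for a ball. The approximation and limit arguments are routine once compactness of $\overline{\cF(K)}$ and convexity of $K$ are used as above. Boundedness of $K$ is not actually needed in this argument beyond guaranteeing the setting used later for $B_{Q,T}$.
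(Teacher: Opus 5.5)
Your proof is correct and is the standard argument (Schauder projection onto the convex hull of a finite $\varepsilon$-net, Brouwer's theorem on that finite-dimensional compact convex set, then a compactness limit). The paper gives no proof of its own and refers to Evans's textbook, where essentially this argument is carried out. Placing the net inside $\overline{\cF(K)}$ lets you treat the closed-bounded-convex formulation directly, without needing compactness of the closed convex hull of $\cF(K)$. One small correction to your closing remark: boundedness of $K$ does matter if ``compact'' is read as ``maps bounded sets to relatively compact sets'', because it is then exactly what makes $\cF(K)$ relatively compact.
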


We now prove that for a certain $(Q, T)$, $\cF_N$ satisfies the hypotheses of Theorem~\ref{eq:faedo-glrk:schauder-fixed-point}. According to Proposition~\ref{prop:smlrt-prop:existence_regular_velocity}, if $Q$ and $T$ satisfy the condition
\begin{equation}
TQ \leq R_0 \sqrt{\frac{\pi}{5} R_0}, 
\label{eq:faedo-glrk:QT_cdt}
\end{equation}
then for all $u \in B_{Q, T}$, there exists a unique solution to 
\begin{equation}
\partial_t \chi + \tilde {\Pi} u  \cdot \nabla \chi = 0 \; \mbox {in} \;  \cD^\prime((0, T) \times \xR^3), \quad \chi \vert_{t=0} = \x1_{\cB_0} \; \mbox{in} \; \xR^3,
\label{eq:faedo-glrk:chib_definition} 
\end{equation} 
and
$
\chi \in \xCzero ([0, T]; \xLn{p}_{\xloc}(\xR^3))  
$
for all $1 \leq p < \infty$. Moreover, there exist $x_b, R_b \in \xCone([0, T])$ such that 
\begin{equation}
\chi = \x1_{\cB(x_b, R_b)}, \quad  R_b \geq \frac{R_0}{2}.
\label{eq:faedo-glrk:chib_expression}
\end{equation}
Accordingly, we define
\begin{equation*}
\mu = (1 - \chi) \mu_f + n \chi, \quad \nu = (1 - \chi) \nu_f + \chi \nu_b, \quad p_\delta(\rho, \chi) = (1 - \chi) a_f \rho^{\gamma_f} + \chi a_b \rho^{\gamma_b} + \delta \rho^\beta.
\end{equation*}
From now on, we assume that $Q$ and $T$ satisfies \eqref{eq:faedo-glrk:QT_cdt} and we fix $u \in B_{Q, T}$. Let $\rho$ be the solution to 
\begin{equation}
\partial_t \rho + \xdiv (\rho u) = \varepsilon \Delta \rho \; \mbox{in} \; (0, T) \times \O, \quad \nabla \rho \cdot n = 0 \; \mbox{on} \; \partial \O, \quad \rho\vert_{t = 0} = \rho_0 \; \mbox{in} \; \O, 
\label{eq:faedo-glrk:continuity_equation}
\end{equation} 
with $ 0 < \underline {\rho} \leq \rho_0 \leq \overline {\rho}.$ We recall the following classical maximal regularity result for the parabolic problem, (see, \eg,  \cite{Evans2022PDE}):
\begin{prpstn}
Let $\O$ be a bounded smooth domain, $\underline {\rho}, \overline {\rho} \in (0, \infty)$ and $u \in \xLinfty(0, T; \xWn{1, \infty}(\O))$. Assume that $\rho_0 \in \xWn{1, \infty}(\O)$ satisfies $\underline {\rho} \leq \rho_0 \leq \overline {\rho}$. Then the parabolic problem \eqref{eq:faedo-glrk:continuity_equation} admits a unique solution, which satisfies 
\begin{equation}
\rho \in \xLtwo(0, T; \xHtwo(\O)) \cap \xCzero ([0, T]; \xHone(\O)) \cap \xHone(0, T; \xLtwo(\O))
\label{eq:faedo-glrk:pblc_regularity}
\end{equation} 
and
\begin{equation}
\underline {\rho} \exp \left( - \int_0^t \|\xdiv (u (\tau))\|_{\xLinfty(\O)} \mathrm  {d} \tau \right) \leq \rho(t, x) \leq \overline {\rho} \exp \left( \int_0^t \|\xdiv (u (\tau))\|_{\xLinfty(\O)} \mathrm  {d} \tau \right) 
\label{eq:faedo-glrk:pblc_estimate} 
\end{equation} 
for all $t \in [0, T]$ and a.e. $x \in \O$.
\label{prop:faedo-glrk:parabolic_regularity}
\end{prpstn}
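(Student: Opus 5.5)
The plan is the standard Galerkin-regularization approach for the linear parabolic Neumann problem, followed by a comparison/maximum principle argument for the bounds. Since $u \in \xLinfty(0,T;\xWn{1,\infty}(\O))$, we rewrite \eqref{eq:faedo-glrk:continuity_equation} in non-divergence form,
\[
\partial_t \rho - \varepsilon\Delta\rho + u\cdot\nabla\rho + (\xdiv u)\,\rho = 0,
\]
which is a linear uniformly parabolic equation with bounded lower-order coefficients.

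\textbf{Existence and uniqueness.} We first obtain a weak solution in $\xLtwo(0,T;\xHone(\O))\cap\xHone(0,T;(\xHone)')$ by Galerkin approximation on Neumann eigenfunctions. The energy estimate comes from testing with $\rho$:
\[
\tfrac12\tfrac{\xdif}{\xdif t}\|\rho\|_{\xLtwo}^2+\varepsilon\|\nabla\rho\|_{\xLtwo}^2 = \int_\O \rho\, u\cdot\nabla\rho \le \tfrac{\varepsilon}{2}\|\nabla\rho\|_{\xLtwo}^2 + C\|u\|_{\xLinfty}^2\|\rho\|_{\xLtwo}^2,
\]
and we close with Gronwall. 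Uniqueness follows from the same estimate applied to the difference of two solutions, by linearity.

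\textbf{Higher regularity.} Next we test with $-\Delta\rho$, or equivalently with $\partial_t\rho$ in the non-divergence form. The term $u\cdot\nabla\rho+(\xdiv u)\rho$ is bounded in $\xLtwo$ by $C\|u\|_{\xWn{1,\infty}}\|\rho\|_{\xHone}$, so we get
\[
\tfrac{\varepsilon}{2}\tfrac{\xdif}{\xdif t}\|\nabla\rho\|_{\xLtwo}^2+\tfrac12\|\partial_t\rho\|_{\xLtwo}^2 \le C\|\rho\|_{\xHone}^2,
\]
using $\rho_0\in\xWn{1,\infty}\subset\xHone$. Elliptic regularity for the Neumann Laplacian on the smooth domain $\O$, writing $\varepsilon\Delta\rho = \partial_t\rho + \xdiv(\rho u) \in \xLtwo$, then gives $\rho\in\xLtwo(0,T;\xHtwo)$. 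Continuity in time with values in $\xHone$ follows from the Lions–Magenes interpolation lemma, since $\rho\in\xLtwo(\xHtwo)\cap\xHone(\xLtwo)$. This yields \eqref{eq:faedo-glrk:pblc_regularity}.

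\textbf{Bounds (main step).} For \eqref{eq:faedo-glrk:pblc_estimate}, set $\bar r(t)=\overline{\rho}\exp\big(\int_0^t\|\xdiv u\|_{\xLinfty}\big)$ and $w=\rho-\bar r$. Then
\[
\partial_t w - \varepsilon\Delta w + \xdiv(w u) = -\bar r\,\big(\|\xdiv u\|_{\xLinfty}+\xdiv u\big) \le 0,
\]
with Neumann boundary condition and $w(0)\le 0$. We test with $w^+$; this is legitimate since $w\in\xLtwo(\xHone)\cap\xHone(\xLtwo)$. Integration by parts gives
\[
\int_\O \xdiv(wu)\,w^+ = -\int_\O w^+ u\cdot\nabla w^+ = \tfrac12\int_\O \xdiv u\,(w^+)^2,
\]
where the boundary term vanishes because $u\in X_N$ vanishes on $\partial\O$. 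If instead $u\cdot n\neq 0$ on $\partial\O$, we would have to retain $\tfrac12\int_{\partial\O}u\cdot n\,(w^+)^2$, and this is the delicate point. We then obtain $\tfrac{\xdif}{\xdif t}\|w^+\|_{\xLtwo}^2\le \|\xdiv u\|_{\xLinfty}\|w^+\|_{\xLtwo}^2$, so Gronwall gives $w^+\equiv0$. The lower bound follows symmetrically with $\underline r(t)=\underline{\rho}\exp\big(-\int_0^t\|\xdiv u\|_{\xLinfty}\big)$ and $(\underline r-\rho)^+$.

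The main obstacle is justifying these truncation computations at the available regularity and handling the boundary flux term. Both are resolved by the $\xHone(\xLtwo)$ regularity obtained in the previous step and by the Dirichlet condition on $u$ inherited from $X_N$.
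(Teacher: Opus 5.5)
Your regularity argument (Galerkin on Neumann eigenfunctions, testing with $\partial_t\rho$, Neumann elliptic regularity, Lions--Magenes) is the standard proof of the classical result that the paper only cites from Evans without giving a proof, and it is fine. The gap is in the bounds. The proposition is stated for an arbitrary $u\in\xLinfty(0,T;\xWn{1,\infty}(\O))$, with no boundary condition on $u$. Your identity $\int_\O \xdiv(wu)\,w^+ = \frac12\int_\O (\xdiv u)\,(w^+)^2$ holds only if $u\cdot n=0$ on $\partial\O$. You then close the argument by importing $u|_{\partial\O}=0$ from $X_N$, which is not a hypothesis of the statement. The same silent assumption sits in your first energy estimate, where $-\int_\O\xdiv(\rho u)\rho$ becomes $\int_\O\rho u\cdot\nabla\rho$; that step drops the boundary term $\int_{\partial\O}\rho^2\,u\cdot n$. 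As written, you therefore prove the proposition only when $u\cdot n=0$ on $\partial\O$. This suffices for the paper's application, but the general case is left as an unresolved ``delicate point''.

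That point is not actually delicate: just never integrate the transport term by parts. Since $w\in\xHone$ and $u\in\xWn{1,\infty}$, you have $\xdiv(wu)=u\cdot\nabla w+(\xdiv u)\,w$ pointwise and $w^+\nabla w=w^+\nabla w^+$ a.e. The diffusion contributes exactly $\varepsilon\|\nabla w^+\|_{\xLtwo}^2$ because $\partial_n w=\partial_n\rho=0$. Hence
\[
\frac12\frac{\xdif}{\xdif t}\|w^+\|_{\xLtwo}^2+\varepsilon\|\nabla w^+\|_{\xLtwo}^2 \le \|u\|_{\xLinfty}\|\nabla w^+\|_{\xLtwo}\|w^+\|_{\xLtwo}+\|\xdiv u\|_{\xLinfty}\|w^+\|_{\xLtwo}^2 \le \varepsilon\|\nabla w^+\|_{\xLtwo}^2+\Big(\frac{\|u\|_{\xLinfty}^2}{4\varepsilon}+\|\xdiv u\|_{\xLinfty}\Big)\|w^+\|_{\xLtwo}^2 ,
\]
and $w^+(0)=0$ together with Gronwall gives $w^+\equiv0$. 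The large, $\varepsilon$-dependent Gronwall constant is harmless because the initial value vanishes; the exponential rate in the final bound comes solely from your choice of $\bar r$. Treat the lower bound and the $\xLtwo$ energy estimate the same way, bounding $\int_\O (u\cdot\nabla\rho)\rho$ and $\int_\O(\xdiv u)\rho^2$ directly. With these changes the proof covers the statement as given.
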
 
Since $\rho_0 \in \xWn{1, \infty}(\O)$ and $u \in B_{Q, T}$ in \eqref{eq:faedo-glrk:continuity_equation}, we may apply Proposition~\ref{prop:faedo-glrk:parabolic_regularity}. We conclude that $\rho$ satisfies \eqref{eq:faedo-glrk:pblc_regularity} as well as the estimate \eqref{eq:faedo-glrk:pblc_estimate}. Since all norms are equivalent on the finite-dimensional space $X_N$, there exists a constant $c(N)$, depending only on $N$, such that for all $\varphi \in X_N$, 
\begin{equation*}
c(N)^{-1} \|\varphi\|_{\xWn{1, \infty}(\O)} \leq \|\varphi\|_{\xLtwo(\O)} \leq c(N) \|\varphi\|_{ \xWn{1, \infty}(\O)}. 
\end{equation*}
In particular, we deeduce that 
\begin{equation}
\underline {\rho} \exp(-c(N)QT) \leq \rho(t, x) \leq  \overline {\rho} \exp(c(N)QT) \;  \; \mbox{for all} \; t \in [0, T] \; \mbox{and a.e.} \; x \in \O.
\label{eq:faedo-glrk:rho_bounds}
\end{equation} 
For $\tilde {u} : [0, T] \rightarrow X_N$, we write
\begin{equation*}
\tilde {u}(t) = \sum_{i = 1}^N g_i(t) \psi_i
\end{equation*}
where the functions $\psi_i$ are defined in \eqref{eq:appx_sol:psi_family}. We then consider the following system of ODEs:
\begin{equation}
\mathbb{A}(t) \frac {\xdif}{\xdif t} \tilde {u}(t) + \mathbb{B}(t) \tilde {u}(t) = F(t), \quad \tilde {u}(0) = u_0 = \sum_{i=1}^N \left(\int_\O u_0 \cdot \psi_i \xdif y \right) \psi_i, 
\label{eq:faedo-glrk:ODE_tdu}
\end{equation}
where the matrices $\mathbb{A}(t) = (a_{i, j}(t))_{1 \leq i, j \leq N}$, $\mathbb{B}(t) = (b_{i, j}(t))_{1 \leq i, j \leq N}$ and $F(t) = (f_i(t))_{1 \leq i \leq N}$ are defined by 
 \begin{align}
a_{i, j}(t) =& \int_\O \rho \psi_i \cdot \psi_j \xdif y, \label{eq:faedo-glrk:aij_definition} \\
b_{i, j}(t) =& \int_\O \rho (u \cdot \nabla \psi_j) \cdot \psi_i \xdif y + \int_\O 2 \mu \left(\xD(\psi_i) - \frac13 \xdiv (\psi_i) \right): \left(\xD(\psi_j) - \frac13 \xdiv (\psi_j) \right)  + \nu \xdiv (\psi_i) \xdiv (\psi_j) \xdif y \notag \\ 
& + \int_\O \varepsilon \left(\nabla \psi_j \nabla \rho \right)\cdot \psi_i \xdif y + n \int_\O \chi (\psi_i - \Pi(t) \psi_i) \cdot (\psi_j - \Pi(t) \psi_j) \xdif y, \label{eq:faedo-glrk:bij_definition} \\
f_j(t) =& - \int_\O \rho g \cdot \psi_j \xdif y +  \int_\O  \left(\chi \frac{\kappa_b}{R_b} + p_\delta(\rho, \chi)\right) \xdiv (\psi_j) \xdif y, \label{eq:faedo-glrk:fj_definition} 
\end{align} 
and the projector $\Pi$ is defined as in \eqref{eq:appx_sol:Pibn_definition}, with $\chi_n$  replaced with $\chi$. The positive lower bound of $\rho$ in \eqref{eq:faedo-glrk:rho_bounds} implies $\mathbb{A}(t) \geq \underline {\rho} \xI_N$ in the sense of symmetric positive matrices. In particular, the matrix $\mathbb{A}$ is invertible. Using the regularity of $\chi$ (Proposition~\ref{prop:smlrt-prop:existence_regular_velocity}) and $\rho$ (Proposition~\ref{prop:faedo-glrk:parabolic_regularity}), we deduce that $\mathbb{A}$, $\mathbb{B}$ and $F$ are continuious on $[0, T]$. By the Cauchy-Lipschitz theorem, the system \eqref{eq:faedo-glrk:ODE_tdu} admits a unique solution 
$$
\tilde {u} \in \xCone([0, T]; X_N). 
$$

We now prove that there exist $T, Q \in (0, \infty)$ such that $\cF_N$ defines a continuous and compact mapping from $B_{Q, T}$ into itself. We first show that, for suitable choice of $Q$ and $T$, $B_{Q ,T}$ is stable under $\cF_N$. Using the continuity equation \eqref{eq:faedo-glrk:continuity_equation}, we obtain the following identities by integration by parts: 
\begin{align}
& \int_0^t \int_\O \rho \frac{\xdif}{\xdif t} {\tilde {u}} \cdot \tilde {u}  \xdif y \xdif \tau = - \frac{1}{2} \int_0^t \int_\O \partial_t \rho |\tilde {u}|^2 \xdif y \xdif \tau + \frac12 \int_\O (\rho |\tilde {u}|^2) (t) \xdif y - \frac12 \int_\O  \rho_0 |u_0|^2 \xdif y, \label{eq:faedo-glrk:IPP_rhouu}
 \\ 
& \int_\O \rho (u \cdot \nabla) \tilde {u} \cdot \tilde {u} \xdif y  = -\frac{1}{2} \int_\O \xdiv (\rho u) |\tilde {u}|^2 \xdif y 
\label{eq:faedo-glrk:IPP_rhouguu}
\end{align}
Multiplying \eqref{eq:faedo-glrk:ODE_tdu} by $\tilde {u}$, integrating in time, and combining with \eqref{eq:faedo-glrk:IPP_rhouu}-\eqref{eq:faedo-glrk:IPP_rhouguu}, we obtain the following energy identity: 
\begin{multline}
\int_\O \frac12 \rho |\tilde {u}|^2 \xdif y + \int_0^t \int_\O 2 \mu |\xD(\tilde {u}) - \frac13 \xdiv (\tilde {u})\xI_3|^2 + \nu |\xdiv (\tilde {u})|^2 \xdif y \xdif \tau \\
= - \int_0^t \int_\O \rho g \cdot \tilde {u} \xdif y \xdif \tau + \int_0^t \int_\O \left(\chi \frac{\kappa_b}{R_b} + p_\delta(\rho, \chi)\right) \xdiv (\tilde {u}) \xdif y \xdif \tau + \frac12 \int_\O \rho_0 |u_0|^2 \xdif y
\label{eq:faedo-glrk:nrj_estimate}
\end{multline} 
Using Hölder's inequality, we bound the right-hand side of \eqref{eq:faedo-glrk:nrj_estimate} by 
\begin{multline*}
T \|\rho\|_{\xLinfty((0, T) \times \O)} \|g\|_{\xLinfty(0, T; \xLtwo(\O))}  \| \tilde {u}\|_{\xLinfty(0, T; \xLtwo(\O))} \\
+ \left(\left\|\chi (\kappa_b/R_b) \right\|_{\xLone((0, T) \times \O)} + \left\| p(\rho, \chi) \right\|_{\xLone((0, T) \times \O)} \right) \|\xdiv (\tilde {u})\|_{\xLinfty ((0, T)\times \O)}  + \frac12 \|\sqrt{\rho_0} u_0\|_{\xLtwo(\O)}^2. 
\end{multline*} 
Using the lower bound on $R_b$ from \eqref{eq:faedo-glrk:chib_expression} and the bounds on $\rho$ from \eqref{eq:faedo-glrk:rho_bounds}, we get 
\begin{equation*} 
\left\|\chi (\kappa_b/R_b) \right\|_{\xLone((0, T) \times \O)}  \leq (2\kappa_b/R_0) |\O| T.
\end{equation*}
and 
\begin{equation*}
\begin{aligned}
\left\|p_\delta(\rho, \chi) \right\|_{\xLone((0, T)\times \O)}&\leq a_f \|\rho^{\gamma_f}\|_{\xLone((0, T)\times \O)} + a_b \|\rho^{\gamma_b}\|_{\xLone((0, T)\times \O)} + \delta \|\rho^\beta\|_{\xLone((0, T)\times \O)} , \\
& \leq  \left(a_f \overline {\rho}^{\gamma_f} \exp(\gamma_f c(N) Q T ) + a_b \overline {\rho}^{\gamma_b} \exp( \gamma_b c(N) QT) + \delta \overline {\rho}^{\beta} \exp(\beta c(N) Q T ) \right) |\O| T.
\end{aligned}
\end{equation*}  
Combining these bounds and using \eqref{eq:faedo-glrk:rho_bounds}, we deduce
\begin{multline*}
\|\tilde {u}\|_{\xLinfty(0, T; \xLtwo(\O))}^2 \leq  2 \frac{\overline{\rho}}{\underline{\rho}} \exp(2 c(N) QT) \|g\|_{\xLinfty(0, T; \xLtwo(\O))} Q T \\
+ 2 c(N) \frac{|\O|}{\underline {\rho}}  \exp(c(N)QT) \left(\frac{2 \kappa_b}{R_0} + a_f \overline {\rho}^{\gamma_f} \exp(\gamma_f c(N) Q T)  + a_b \overline {\rho}^{\gamma_b} \exp(\gamma_b c(N) Q T) + \delta \overline {\rho}^{\beta} \exp(\beta c(N) Q T ) \right) Q T 
\\ +  \frac{1}{\underline{\rho}}  \exp(c(N) Q T)   \|\sqrt{\rho_0} u_0\|_{\xLtwo(\O)}^2. 
\end{multline*} 
We now choose 
\begin{equation}
Q \geq \max \left\{1, \frac{4}{\underline {\rho}} \|\sqrt{\rho_0} u_0\|_{\xLtwo(\O)}^2\right\}, 
\label{eq:faedo-glrk:Q_constraint}
\end{equation}
and 
\begin{equation}
T = T(N) = \min \left\{\frac{\log(2)}{2  Q},  \frac{ \underline{\rho}Q}{16 \overline {\rho} \|g\|_{\xLinfty(0, T; \xLtwo(\O))}},  \frac{\log(2)}{\overline{\gamma} c(N) Q}, \frac{ \underline {\rho} Q}{32 c(N) |\O|((\kappa_b/R_0) + a_f \overline {\rho}^{\gamma_f} + a_b \overline {\rho}^{\gamma_b} + \delta \overline {\rho}^\beta)}\right\}, 
\label{eq:faedo-glrk:T_constraint}
\end{equation}
where $\overline {\gamma} = \max \{\gamma_f, \gamma_b, \beta \}$, and therefore $$\cF_N(B_{Q, T}) \subset B_{Q, T}.$$

We now prove that $\cF_N$ is continuous on $B_{Q, T}$. Let $\{u_k\} \subset B_{Q, T}$ such that $u_k \rightarrow u$ in $\xCzero ([0, T]; \cD(\overline {\O}))$ and $\chi_k$ satisfying \eqref{eq:faedo-glrk:chib_definition}. According to Proposition~\ref{prop:smlrt-prop:strong_sequential_continuity}, we have, for all $1 \leq p < \infty$,
\[
\begin{aligned}
& \chi_k \rightarrow \chi \; \mbox{weakly--* in} \; \xLinfty((0, T) \times \xR^3),  \; \mbox{strongly in} \; \xCzero([0, T];  \xLn{p}(\xR^3)), \\
& \tilde {\eta}_k \rightarrow \tilde {\eta} \; \mbox {in} \; \xCone([0, T]; \cM).
\end{aligned}
\]
We deduce that
\[
a_{i,j}^k \rightarrow a_{i, j}, \quad b_{i, j}^k \rightarrow b_{i, j}, \quad f_j^k \rightarrow f_j \; \mbox{in} \; \xCzero([0, T]), 
\]
and so we obtain, from standard stability results on ODE's, 
\begin{equation*}
\cF_N (u_k) = \tilde {u}_k \rightarrow \tilde {u} = \cF_N (u) \; \mbox {in} \; \xCzero([0, T]; X_N). 
\end{equation*}

We now prove that $\cF_N : B_{Q, T} \rightarrow B_{Q, T}$ is a compact operator. Let $\tilde {u}$ be solution to \eqref{eq:faedo-glrk:ODE_tdu}. Then, for all $t \in [0, T]$,  
\begin{equation*}
\left|\frac{\xdif}{\xdif t} \tilde {u}(t)\right| \leq |\mathbb{A}^{-1}(t)|(|\mathbb{B}(t)| |\tilde {u}(t)| +  |F(t)|) \leq  |\mathbb{A}^{-1}(t)|(Q|\mathbb{B}(t)| +  |F(t)|), 
\end{equation*} 
so that 
\begin{equation*}
\sup \limits_{t \in [0, T]} \left\{ |\tilde{u}(t)| + \left|\frac{\xdif}{\xdif t} \tilde {u}(t)\right| \right\} \leq C(N), 
\end{equation*} 
for a constant $c(N)$ depending only on $N$. Consequently, 
\begin{equation*}
\sup \limits_{u \in B_{Q, T}} \|\cF_N (u) \|_{\xCone([0, T]; X_N)} \leq C(N).
\end{equation*} 
By the Arzelà-Ascoli theorem, it follows that $\cF_N : B_{Q, T} \rightarrow B_{Q, T}$ is compact. We can then apply Schauder's fixed point Theorem~\ref{eq:faedo-glrk:schauder-fixed-point} to conclude the existence of a fixed point $u_N \in B_{Q, T}$. At this stage, we have thus established the existence of a solution $u_N$ of the finite dimensional problem on an interval $[0, T]$ with $T$ possibly depending on $N$. We denote by $\rho_N$ and $\chi_N$ the corresponding solutions of the regularized continuity and transport equations on $[0, T] \times \xR^3$. We also introduce $\tilde {\eta}_N \in \xCone([0, T]; \cM)$ the propagator associated to the velocity field $\tilde {\Pi}_N u$ and $x_N, R_N \in \xCone([0, T])$ such that 
\begin{equation}
\tilde {\eta}_N[t] (x) = x_N(t) + \frac{R_N(t)}{R_0}(x - x_0) \quad \forall \, (t, x) \in [0, T] \times \xR^3. 
\label{eq:faedo-glrk:tetaN_definition}
\end{equation}

To extend this solution to an arbitrary time $\tilde {T} > 0$ independent of $N$, we use a standard continuation argument based on energy estimates  uniform with respect to $T(N)$. To obtain such uniform bounds on $u_N$, we multiply the momentum equation \eqref{eq:appx_sol:rhonun_momentum_equation} by $u_N$ and derive the following energy identity, valid on $[0, T(N)]$: 
\begin{multline}
\frac{\xdif}{\xdif t} \tilde {E}_\delta (\rho_N, \rho_N u_N) + \int_\O \left(2 \mu_N |\xD (u_N) - \frac13 \xdiv (u_N)|^2 + \nu_N |\xdiv (u_N)|^2 \right) \xdif y \xdif \tau \\ 
+ \delta \varepsilon \beta \int_\O (\rho_N)^{\beta-2} |\nabla \rho_N|^2 \xdif y \xdif \tau = a_f \int_\O (1 - \chi_N) \rho_N^{\gamma_f} \xdiv (u_N) \xdif y + a_b \int_\O \chi_N \rho_N^{\gamma_b} \xdiv (u_N) \xdif y \\ 
- \int_\O \rho_N g \cdot u_N \xdif y \xdif \tau + \int_\O \chi_N \frac{\kappa_b}{R_N} \xdiv (u_N) \xdif y \xdif \tau, 
\label{eq:cvg_fglrk:energyN_estimate}
\end{multline}
where 
\begin{equation*}
\tilde {E}_\delta(\rho_N, \rho_N u_N)= \int_\O \left(\frac12 \rho_N |u_N|^2 + \delta \frac{\rho_N^\beta}{\beta - 1} \right) \xdif y. 
\end{equation*}
Since $\beta \geq \max\{2\gamma_f, 2 \gamma_b\}$, the terms on the right-hand side of \eqref{eq:cvg_fglrk:energyN_estimate} can be estimated using Hölder's inequality and the embeddings $$\xLbeta(\O)  \hookrightarrow \xLn{2\gamma_f}(\O), \quad \xLbeta(\O)  \hookrightarrow  \xLn{2\gamma_b}(\O).$$ More precisely, they are bounded by
\begin{multline*}
a_f |\O|^{(\beta - 2 \gamma_f)/2\beta} \|\rho_N\|_{\xLbeta(\O)}^{\gamma_f}\|\xdiv (u_N)\|_{\xLtwo(\O)} + a_b |\O|^{(\beta - 2 \gamma_b)/2\beta} \|\rho_N\|_{\xLbeta(\O)}^{\gamma_b}\|\xdiv (u_N)\|_{\xLtwo(\O)} \\ 
+ \|\rho_N |u_N|^2\|_{\xLone(\O)}^\frac12 \|\rho_N\|_{\xLbeta(\O)}^\frac12\|g\|_{\xLn{\frac{2\beta}{\beta -1}}(\O)} +  \kappa_b \|\xdiv (u_N)\|_{\xLtwo(\O)} \|\chi_N/R_N\|_{\xLtwo(\O)}.
\end{multline*} 
Applying Young's inequality, we get 
\begin{multline*}
\frac{a_f^2}{4 \theta} |\O|^{(\beta - 2 \gamma_f)/\beta} \|\rho_N\|_{\xLbeta(\O)}^{2 \gamma_f} + \theta \|\xdiv (u_N)\|_{\xLtwo(\O)}^2 +  \frac{a_b^2}{4\theta} |\O|^{(\beta - 2 \gamma_b)/\beta} \|\rho_N\|_{\xLbeta(\O)}^{2 \gamma_b} + \theta \|\xdiv (u_N)\|_{\xLtwo(\O)}^2 \\
\leq a_f^{\frac{2\beta}{\beta-2\gamma_f}} \frac{\beta - 2 \gamma_f}{4 \theta \beta} \left(\frac{6\gamma_f(\beta-1)}{4 \theta \delta \beta}\right)^\frac{2\gamma_f}{\beta - 2\gamma_f} |\O| + \frac{\delta}{3 (\beta - 1)} \|\rho_N\|_{\xLbeta(\O)}^\beta  \\ 
+ a_b^{\frac{2\beta}{\beta-2\gamma_b}}  \frac{\beta - 2 \gamma_b}{4 \theta \beta} \left(\frac{6\gamma_b(\beta-1)}{4 \theta \delta \beta}\right)^\frac{2\gamma_b}{\beta - 2\gamma_b} |\O| + \frac{\delta}{3(\beta - 1)} \|\rho_N\|_{\xLbeta(\O)}^\beta + 2 \theta \|\xdiv (u_N)\|_{\xLtwo(\O)}^2
\end{multline*} 
and 
 \begin{multline*}
\frac12 \|\rho |u_N|^2\|_{\xLone(\O)} + \frac12 \|\rho_N\|_{\xLbeta(\O)}\|g\|_{\xLn{\frac{2\beta}{\beta -1}}(\O)}^2 + \theta  \|\xdiv (u_N)\|_{\xLtwo(\O)}^2 + \frac{\kappa_b}{4\theta}\|(\chi_N/R_N) \|_{\xLtwo(\O)}^2  \\
\leq \frac12 \|\rho |u_N|^2\|_{\xLone(\O)} + \frac{\delta}{3(\beta - 1)} \|\rho_N\|_{\xLbeta(\O)}^\beta + \frac{\beta - 1}{2\beta}\left(\frac{3(\beta - 1)}{2\delta\beta}\right)^\frac{1}{\beta - 1}\|g\|_{\xLn{\frac{2\beta}{\beta - 1}}(\O)}^{\frac{2\beta}{\beta - 1}} \\
+ \frac{\kappa_b^2}{4\theta}\|\chi_N/R_N\|_{\xLtwo(\O)}^2 + \theta  \|\xdiv (u_N)\|_{\xLtwo(\O)}^2, 
\end{multline*} 
where $\theta > 0$ is a positive constant to be fixed later. Finally, since $R_N \geq R_0/2$ on  $[0,T(N)]$, we have the explicit bound
\begin{equation*}
\frac{\kappa_b^2}{4\theta}\|\chi_N/R_{b, N}\|_{\xLtwo(\O)}^2 \leq \frac{\kappa_b^2|\O|}{\theta R_0^2}. 
\end{equation*} 
Setting $\mu_m = \min\{n, \mu_f\}$ and $\nu_m = \min\{\nu_f, \nu_b\}$, we have
\begin{multline*}
\int_{\O} 2\mu |\xD(u_N) - \frac13 \xdiv (u_N) \xI_3|^2 + \nu |\xdiv (u_N)|^2 \xdif y \geq  \\ 
\int_\O 2 \mu_m |\xD(u_N) - \frac13 \xdiv (u_N) \xI_3|^2 + \nu_m |\xdiv (u_N)|^2 \xdif y = \int_\O \mu_m |\nabla u_N|^2 + \left(\frac{\mu_m}{3} +  \nu_m\right) |\xdiv (u_N)|^2 \xdif y 
\end{multline*}
and choosing suitable $n$ and $\theta$ such that 
\begin{equation*}
\mu_m = \mu_f, \quad \theta \leq \frac16 \big(\nu_m + \frac{\mu_f}{3} \big), 
\end{equation*}
we obtain
\begin{multline*} 
\frac{\xdif}{\xdif t} \tilde {E}_\delta (\rho_N, \rho_N u_N) + \mu_f \|\nabla u_N\|_{\xLtwo(\O)}^2  \leq \tilde {E}_\delta(\rho_N, \rho_N u_N)  \\ 
+  c(\beta, \delta)\|g\|_{\xLn{\frac{2\beta}{\beta - 1}}(\O)}^{\frac{2\beta}{\beta - 1}} + c(\gamma_f, \gamma_b, \beta, \delta, \mu_f, \nu_m) |\O| + c(\kappa_b, \mu_f, \nu_m) \frac{|\O|}{R_0^2}.
\end{multline*} 
Consequently, by Gronwall's lemma, for $t \in [0, T(N)]$, we have
\begin{multline}
E_N(\rho_N(t), \rho_N u_N(t))  + \mu_f \int_0^t \|\nabla u_N\|_{\xLtwo(\O)}^2 \xdif \tau \\ 
\leq e^t \tilde {E}_\delta(\rho_0, q_0) + \int_0^t \left( c(\beta, \delta)\|g\|_{\xLn{\frac{2\beta}{\beta - 1}}(\O)}^{\frac{2\beta}{\beta - 1}}  + c(\gamma_f, \gamma_b, \beta, \delta, \mu_f, \nu_m) |\O| + c(\kappa_b, \mu_f, \nu_m)\frac{|\O|}{R_0^2}  \right) e^{t-\tau} \xdif \tau
\label{eq:cvg_fglrk:energyN_estimate_Gronwall}
\end{multline} 
A direct consequence of \eqref{eq:cvg_fglrk:energyN_estimate_Gronwall} is the uniform bound
\begin{multline}
\|\nabla u_N\|_{\xLtwo((0, T(N)) \times \O)}^2 \\ 
\leq e^{\tilde {T}} \left( \tilde {E}_\delta(\rho_0, q_0)   + c(\beta, \delta)\|g\|_{\xLn{\frac{2\beta}{\beta - 1}}((0, \tilde {T})\times \O)}^{\frac{2\beta}{\beta - 1}} + c(\gamma_f, \gamma_b, \beta, \delta, \mu_f, \nu_m) \tilde {T} |\O| + c(\kappa_b, \mu_f, \mu_m) \frac{\tilde {T} |\O|}{R_0^2} \right) = K,
\label{eq:cvg_fglrk:gduN_bounds}
\end{multline}
for any fixed $\tilde {T} > 0$ and $0 < T(N) \leq \tilde {T}$. By equivalence of the $\xLinfty$ and $\xLtwo$ norms on $X_N$, it follows that   
\begin{equation}
0 < \underline{\rho} \exp\left(- d(N) K \right) \leq \rho(t, x) \leq  \overline {\rho} \exp\left(d(N) K \right) \; \mbox{for all} \; t \in [0, T(N)] \; \mbox{and a.e.} \; x \in \O. 
\label{eq:cvg_fglrk:rhoN_bounds} 
\end{equation} 
Furthermore, \eqref{eq:cvg_fglrk:energyN_estimate_Gronwall} implies
\begin{equation*}
\sup_{t \in [0, T(N)]}  \int_\O \rho_N(t) |u_N(t)|^2 \xdif y \leq K,
\end{equation*} 
which, together with \eqref{eq:cvg_fglrk:rhoN_bounds}, yields
\begin{equation}
\|u_N\|_{\xLinfty(0, T(N); \xLtwo(\O))} \leq \overline{\rho} \exp \left(d(N) K\right) K.
\label{eq:cvg_fglrk:unN_estimate}
\end{equation}
\sloppy
Since \eqref{eq:cvg_fglrk:unN_estimate} is independent of $T(N)$, we can iterate the fixed-point argument with updated initial data $(B_N(T_N),\rho_N(T_N), u_N(T_N))$, provided that
\begin{equation}
\inf_{t \in [0, T(N)]}  R_N(T(N)) \geq \frac{R_0}{2} \quad \mbox{and} \quad \inf_{t \in [0, T(N)]} \xdist (\cB_N(t), \partial \O) \geq 2 \sigma > 0.
\label{eq:cvg_fglrk:RbN_bounds} 
\end{equation}
The first condition in \eqref{eq:cvg_fglrk:RbN_bounds} holds if 
\begin{equation*}
\|\dot {R}_N\|_{\xLone(0, T(N))} \leq \frac{R_0}{2}.
\end{equation*} 
Let $0 < \overline {T} \leq T(N)$, recalling that, for all $t \in [0, \overline {T}]$, 
\begin{equation*}
\dot {R}_N(t) = \frac{5}{4 \pi R_0^4}  \int_{\cB_0} \x1_\O(\tilde{\eta}_N[t](y)) (y - x_0) \cdot u_N(t, \tilde{\eta}_N[t](y)) \xdif y, \quad \tilde {\eta}_N[t](x) = x_N(t) + \frac{R_N(t)}{R_0}(x - x_0),
\end{equation*} 
we deduce 
\begin{equation*}
\|\dot {R}_N\|_{\xLone(0, \overline {T})} \leq \frac{\sqrt{\overline {T}}}{R_0} \sqrt{\frac{5}{4 \pi R_0}} \|u_N\|_{\xLtwo((0, \overline {T}) \times \O)}.
\end{equation*} 
By Poincaré inequality and estimate \eqref{eq:cvg_fglrk:energyN_estimate_Gronwall},
\begin{equation*}
\|u_N\|_{\xLtwo((0, \overline {T}) \times \O)}^2 \leq c_p^2 K.
\end{equation*}
Consequently, if
\begin{equation}
T(N) \leq T_1 = \frac{\pi R_0^5}{5 c_p^2 K}, 
\label{eq:cvg_fglrk:tT_definition}
\end{equation}  
then the first condition in \eqref{eq:cvg_fglrk:RbN_bounds} is satisfied. The second condition in \eqref{eq:cvg_fglrk:RbN_bounds} holds if, for almost every $x \in \cB_0$, 
\begin{equation}
\left\| \frac{\xdif}{\xdif t} \tilde {\eta}_N[\cdot](x)\right \|_{\xLone(0, T(N))} \leq \xdist(\cB_0, \partial \O) - 2 \sigma.
\label{eq:cvg_fglrk:dist_eta_bound}
\end{equation} 
Let $0 < \overline {T} \leq T(N)$, for all $t \in [0, \overline {T}]$ and almost every $x \in \cB_0$, 
\begin{equation*}
\left|\frac{\xdif}{\xdif t} \tilde {\eta}_N[t](x)\right| = \left|\dot {x}_N(t) + \frac{\dot {R}_N(t)}{R_0}(x -  x_0) \right| \leq |\dot {x}_N(t) | + \left|\frac{\dot {R}_N(t)}{R_0}\right| |x -  x_0| \leq |\dot {x}_N(t)| + |\dot {R}_N(t)|.
\end{equation*} 
Recalling that 
\begin{equation*}
\dot {x}_N = \frac{3}{4 \pi R_0^3} \int_{\cB_0} \x1_\O(\tilde {\eta}_N[t](y))  u_N(t , \tilde {\eta}_N[t](y)) \xdif y, \quad \dot {R}_N = \frac{5}{4 \pi R_0^4} \int_{\cB_0} \x1_\O(\tilde {\eta}_N[t](y))  (y -  x_0) \cdot  u_N(t, \tilde {\eta}_N[t](y)) \xdif y,  
\end{equation*}
we obtain  
\begin{equation*}
\|\dot {x}_N \|_{\xLtwo(0, \overline {T})} \leq \frac{\sqrt{\overline {T}}}{R_0}\sqrt{\frac{3}{4\pi R_0}} \|u_N\|_{ \xLtwo((0, \overline {T}) \times \O)}, \qquad \|\dot {R}_N\|_{\xLtwo(0, \overline {T})}  \leq \frac{\sqrt{\overline {T}}}{R_0} \sqrt{\frac{5}{4\pi R_0}} \| u_N \|_{ \xLtwo((0, \overline {T}) \times \O)}. 
\end{equation*}
Thus, for almost every $x \in \cB_0$, 
\begin{equation*}
 \left\|\frac{\xdif}{\xdif t} \tilde {\eta}_N[\cdot]  (x) \right\|_{\xLone(0, \overline {T})} \leq c_0  \sqrt{\overline {T}}  \| u_N \|_{ \xLtwo((0, \overline {T}) \times \O)}, 
\end{equation*} 
with $c_0 = \frac{\sqrt{3} + \sqrt{5}}{R_0\sqrt{4\pi R_0}}$. It follows that if
\begin{equation}
T(N) \leq T_2 = \frac{(\xdist(\cB_0, \partial \O) - 2\sigma)^2}{c_0^2 c_p^2 K}, 
\label{eq:faedo-glrk:distance_T_constraint}
\end{equation} 
then the second condition in \eqref{eq:cvg_fglrk:RbN_bounds} is satisfied. Finally, we choose $\tilde {T}$ in \eqref{eq:cvg_fglrk:gduN_bounds} such that  $\tilde {T} \leq \min \{T_1, T_2\}$ and conclude the existence of a solution $u_N$ at least until time $\tilde {T}$. 

\subsection{Convergence of the Faedo-Galerkin scheme and the limiting system} \label{subsec:cvg_faedo-glrk_approximation}
Proposition~\ref{prop:app-sol:existence_Faedo_Galerkin_solution} establishes the existence of solutions $(\cB_N, \rho_N,  u_N)$ to  \eqref{eq:appx_sol:chibN_regularity}-\eqref{eq:appx_sol:muN_nuN_definition} on a time interval $[0, T]$ with $T$ independent of $N$. In this section, we prove Proposition~\ref{prop:appx_sol:existence_n_solution} by passing to the limit in \eqref{eq:appx_sol:chibN_regularity}-\eqref{eq:appx_sol:muN_nuN_definition} as $N \rightarrow \infty$, thereby recovering a solution to the  $n$-level approximation problem \eqref{eq:appx_sol:chibn_regularity}-\eqref{eq:appx_sol:mun_nun_definition}. Given initial data $(\rho_{0, n}, u_{0, n})$ for the $n$-level approximation problem, we construct initial data $(\rho_{0, N}, u_{0, N})$ for the Faedo-Galerkin scheme satisfying
\begin{equation*}
\rho_{0, N} \rightarrow \rho_{0, n} \; \mbox{in} \; \xWn{1, \infty}(\O), \quad \rho_{0, N} u_{0, N} \rightarrow q_{0, n} \; \mbox{in} \;  \xLtwo(\O), 
\end{equation*} 
and 
\begin{equation}
\int_\O \left( \frac12 \rho_{0, N} | u_{0, N}|^2  + \delta \frac{\rho_N^\beta}{\beta-1} \xdif  y \right) \rightarrow \int_\O \left(\frac12 \frac{|q_{0, n}|^2}{\rho_{0, n}} +  \delta \frac{\rho_N^\beta}{\beta-1} \right) \xdif  y.   
\label{eq:cvg_fglrk:E0N_njr_estimate}
\end{equation} 
 The energy estimate \eqref{eq:appx_sol:energyN_estimate} yields, up to a subsequence,
\begin{align}
& u_N \rightarrow u_n \; \mbox{weakly in} \; \xLtwo(0, T; \xHone(\O)), \label{eq:cvg_fglrk:cv_uN}\\ 
& \rho_N \rightarrow \rho_n \; \mbox{weakly--* in} \; \xLinfty(0, T; \xLbeta(\O)), \notag \\ 
& \nabla \rho_N \rightarrow \nabla \rho_n \; \mbox{weakly in} \; \xLtwo((0, T) \times \O).\label{eq:cvg_fglrk:cv_gpN}
\end{align}
It follows from \eqref{eq:cvg_fglrk:cv_uN} and Proposition~\ref{prop:smlrt-prop:weak_sequential_continuity} that, up to a subsequence, for all $1 \leq p < \infty$, 
\begin{equation}
\chi_N \rightarrow \chi_n \; \mbox{weakly--* in} \; \xLinfty((0, T)\times \xR^3), \; \mbox{strongly in} \; \xCzero([0, T]; \xL_{\xloc}^p(\xR^3)), 
\label{eq:cvg_fglrk:chibN_cvg}
\end{equation} 
where the pair $(\chi_n, u_n)$ is a solution of \eqref{eq:appx_sol:chibn_transport_equation}. Moreover, there exist $x_n, R_n \in \xHone(0, T)$ such that,
\begin{equation*}
\chi_n = \x1_{\cB(x_n, R_n)}, \quad R_n  \geq \frac{R_0}{2}.
\end{equation*} 
Finally, using the strong convergence of $\chi_N$ given in \eqref{eq:cvg_fglrk:chibN_cvg} and the weak convergence of $u_N$ given in \eqref{eq:cvg_fglrk:cv_uN}, it follows that
\begin{equation}
\Pi_N u_N \rightarrow \Pi_n u_n \; \mbox{weakly in} \; \xLtwo (0, T; \cS).  
\label{eq:cvg_fglrk:PiNuN_cvg}
\end{equation} 
Following the same arguments as in \cite[Section 2.4]{Feireisl2001JMF}, \cite[Section 7.8.1]{Novotny2004OUP}, we further get, for all $1 \leq p < \frac{4}{3} \beta$, 
\begin{align*}
& \rho_N \rightarrow \rho_n \; \mbox{in} \; \xCzero([0, T]; \xL_{\rm {weak}}^\beta(\O)), \; \mbox{strongly in} \; \xLn{p}((0, T) \times \O), \\ 
& \rho_N u_N \rightarrow \rho_n u_n \; \mbox{weakly in} \; \xLtwo(0, T; \xLn{\frac{6\beta}{\beta+6}}(\O)), \; \mbox{weakly--* in} \; \xLinfty(0, T; \xLn{\frac{2\beta}{\beta+1}}(\O)).
\end{align*} 
These convergence results allow us to pass to the limit $N \rightarrow \infty$ in the regularized continuity equation \eqref{eq:appx_sol:rhoe_continuity_equation}. We now consider the limit in the momentum equation \eqref{eq:appx_sol:rhonun_momentum_equation}. The difficult terms are, for $t \in [0, T]$ and $1 \leq k \leq N$, 
\begin{equation*}
A_N(t, \psi_k) = \int_\O \rho_N u_N \otimes u_N : \nabla \psi_k \xdif y, \quad B_N(t, \psi_k) = \int_\O \varepsilon (\nabla u_N \nabla \rho_N) \cdot \psi_k \xdif y, \quad C_N (t, \psi_k)  = \int_\O \rho_N^\beta \xdiv (\psi_k) \xdif y, 
\end{equation*}
where the functions $\psi_k$ are defined in \eqref{eq:appx_sol:psi_family}. Proceeding similarly to \cite[Section 2.4]{Feireisl2001JMF},  \cite[Section 7.8.2]{Novotny2004OUP}, we get, for $1 \leq p < \frac{4}{3}\beta$, 
\begin{align}
\rho_N u_N \otimes u_N &\rightarrow \rho_n u_n \otimes u_n \; \mbox{weakly in} \; \xLtwo(0, T; \xLn{\frac{6\beta}{4\beta+3}}(\O)), 
\label{eq:cvg_fglrk:rhoNuNuN_cvg} \\ 
\varepsilon \nabla u_N \nabla \rho_N &\rightarrow \varepsilon \nabla u_n \nabla \rho_n \; \mbox {weakly in} \; \xLtwo(0, T; \xLn{\frac{5 \beta - 3}{4\beta}}(\O)), \\ 
\rho_N &\rightarrow \rho_n \; \mbox{strongly in} \; \xLn{p}((0, T) \times \O). 
\label{eq:cvg_fglrk:rhoN_cvg_LpLp}
\end{align} 
It follows from \eqref{eq:cvg_fglrk:rhoNuNuN_cvg}-\eqref{eq:cvg_fglrk:rhoN_cvg_LpLp} the following weak convergences in $\xLone(0, T)$: 
\begin{equation*}
\begin{aligned}
& A_N(\cdot, \psi_k) \rightarrow A_n(\cdot, \psi_k) = \int_\O \rho_n u_n \otimes u_n : \nabla \psi_k \xdif y, \\
& B_N(\cdot, \psi_k) \rightarrow B_n(t, \psi_k) = \int_\O \varepsilon \left(\nabla u_n \nabla \rho_n \right) \cdot \psi_k \xdif y, \\
& C_N(\cdot, \psi_k) \rightarrow C_n(t, \psi_k) = \int_\O \rho_n^\beta \xdiv (\psi_k) \xdif y.
\end{aligned} 
\end{equation*} 
Thus, we have recovered \eqref{eq:appx_sol:rhonun_momentum_equation} as a limit of equation \eqref{eq:appx_sol:rhoNuN_momentum_equation} as $N \rightarrow \infty$, establishing the existence of a solution $(\cB_n, \rho_n, u_n)$ to system \eqref{eq:appx_sol:chibn_regularity}-\eqref{eq:appx_sol:mun_nun_definition}.

It remains to prove energy inequality \eqref{eq:appx_sol:energyn_estimate}. To this purpose, we follow the same lines as in \cite[Section 2.4]{Feireisl2001JMF} \cite[Section 7.8.3]{Novotny2004OUP}. The strong convergence of $\chi_N$ given in \eqref{eq:cvg_fglrk:chibN_cvg} and $\rho_N$ given in \eqref{eq:cvg_fglrk:rhoN_cvg_LpLp}, and the convergence of $\rho_N u_N \otimes u_N$ given in \eqref{eq:cvg_fglrk:rhoNuNuN_cvg} ensure that, up to a subsequence, 
\begin{equation*}
\int_\O \left(\frac12 \rho_N |u_N|^2 + P_\delta(\chi_N, \rho_N) \right) \xdif y \rightarrow \int_\O \left(\frac12 \rho_n |u_n|^2 + P_\delta(\chi_n, \rho_n) \right) \xdif y \; \mbox {in} \; \cD^\prime(0, T).
\end{equation*} 
By the lower semicontinuity of convex functionals, the weak convergence of $u_N$ given in \eqref{eq:cvg_fglrk:cv_uN}, the strong convergence of $\chi_N$ given in \eqref{eq:cvg_fglrk:chibN_cvg} and the weak convergence of $\Pi_N u_N$ given in \eqref{eq:cvg_fglrk:PiNuN_cvg}, we deduce 
\begin{equation}
\begin{aligned}
& \int_0^T \int_\O \left(2 \mu_n \left|\xD(u_n) - \frac13 \xdiv (u_n) \right|^2 + \nu_n |\xdiv (u_n)|^2 \right) \xdif y \xdif \tau \\
& \qquad \leq \liminf \limits_{N \rightarrow \infty} \int_0^T \int_\O \left(2 \mu_N \left|\xD(u_N) - \frac13 \xdiv (u_N) \right|^2 + \nu_N |\xdiv u_N|^2 \right) \xdif y \xdif \tau, \\  
& \int_0^T \int_\O \chi_n |u_n - \Pi_n u_n|^2  \xdif y \xdif \tau \leq \liminf \limits_{N \rightarrow \infty} \int_0^T \int_\O \chi_N |u_N - \Pi_N u_N|^2 \xdif y \xdif \tau . 
\end{aligned} 
\label{eq:cvg_fglrk:lsc_dspt_pnlzt}
\end{equation} 
Using the strong convergence of $\nabla \rho_N \rightarrow \nabla \rho_n$  given in \eqref{eq:cvg_fglrk:cv_gpN}, strong convergence of $\rho_N$ given in \eqref{eq:cvg_fglrk:rhoN_cvg_LpLp}, and Fatou's lemma, we further obtain
\begin{equation}
\int_0^T \int_\O \rho_n^{\beta - 2} |\nabla \rho_n|^2  \xdif y \xdif \tau  \leq \liminf \limits_{N \rightarrow \infty} \int_0^T \int_\O \rho_N^{\beta - 2} |\nabla \rho_N|^2  \xdif y \xdif \tau.
\label{eq:cvg_fglrk:lsc_gdt_rhon}  
\end{equation} 
Collecting these results, we conclude that the energy inequality \eqref{eq:appx_sol:energyn_estimate} holds. Finally, we follow the same ideas as the calculations \eqref{eq:cvg_fglrk:dist_eta_bound}-\eqref{eq:faedo-glrk:distance_T_constraint} to conclude that there exists $T > 0$ such that  
\[\xdist (\cB_N(t), \partial \O) \geq 2 \sigma \quad \forall \, t \in [0, T]. \]

\subsection{High penalization limit in the momentum equation} \label{subsec:cvg_high_penalization_term_momentum_equation}
Proposition~\ref{prop:appx_sol:existence_n_solution} establishes the existence of weak solutions $(\rho_n, u_n, \cB_n)$ to \eqref{eq:appx_sol:chibn_regularity}-\eqref{eq:appx_sol:mun_nun_definition}. In this section, we prove Proposition~\ref{prop:appx_sol:existence_epsilon_level} by passing to the limit $n \rightarrow \infty$, thereby recovering a weak solution to the $\varepsilon$-level approximation problem \eqref{eq:appx_sol:rhoe_continuity_equation}-\eqref{eq:appx_sol:mue_nue_definition}. Given initial data $(\rho_{0, \varepsilon}, u_{0, \varepsilon})$ for the $\varepsilon$-level approximation problem, we consider initial data $(\rho_{0, n}, u_{0, n})$ for the $n$-level problem satisfying $$\rho_{0, n} = \rho_{0, \varepsilon}, \quad u_{0, n} = u_{0, \varepsilon}.$$
The energy estimate \eqref{eq:appx_sol:energyn_estimate} yields, up to a subsequence,
\begin{align}
& u_n \rightarrow u_\varepsilon \; \mbox{weakly in} \; \xLtwo(0, T; \xHone(\O)), \label{eq:high_pen:cv_un} \\ 
& \rho_n \rightarrow \rho_\varepsilon \; \mbox{weakly--* in} \;  \xLinfty(0, T;  \xLbeta(\O)), \notag \\
&\nabla \rho_n \rightarrow \nabla \rho_\varepsilon \; \mbox{weakly in} \; \xLtwo((0, T)\times \O). \notag
\end{align} 
It follows from \eqref{eq:high_pen:cv_un} and Proposition~\ref{prop:smlrt-prop:weak_sequential_continuity} that, up to a subsequence, for all $1\leq p<\infty$, 
\begin{equation}
 \chi_n \rightarrow \x1_{\cB_\varepsilon} \; \mbox{weakly--* in} \; \xLinfty((0, T) \times \xR^3)), \; \mbox{strongly in} \; \xCzero([0, T]; L_{\xloc}^p(\xR^3)), 
 \label{eq:high_pen:chin_convergence}
\end{equation} 
where the pair  $(\x1_{\cB_\varepsilon}, u_\varepsilon)$ satisfies
\begin{equation*}
\partial_t \x1_{\cB_\varepsilon} + \tilde {\Pi}_\varepsilon u_\varepsilon \cdot \nabla \x1_{\cB_\varepsilon} = 0 \; \mbox{in} \; \cD^\prime ((0, T) \times \xR^3), \quad \cB_\varepsilon (0)  = \cB_0. 
\end{equation*}
Let $x_\varepsilon, R_\varepsilon \in \xHone([0, T])$ such that $\cB_\varepsilon = \cB(x_\varepsilon, R_\varepsilon)$, we also have 
\begin{equation*}
R_\varepsilon(t) \geq \frac{R_0}{2} \quad \forall \, t \in [0, T]. 
\end{equation*}
Finally, using the strong convergence of $\chi_n$ given in \eqref{eq:high_pen:chin_convergence} and the weak convergence of $u_n$ given in \eqref{eq:high_pen:cv_un}, it follows that
\begin{equation}
\Pi_n u_n \rightarrow \Pi_\varepsilon u_\varepsilon \; \mbox{weakly in} \; \xLtwo (0, T; \cS).  
\label{eq:high_pen:Pinun_convergence}
\end{equation} 
The penalization term in the energy estimate \eqref{eq:appx_sol:energyn_estimate} leads to 
\begin{equation}
\sqrt{\chi_n}(u_n - \Pi_n u_n) \rightarrow 0 \; \mbox{in} \; \xLtwo((0,T)\times \O).
\label{eq:high_pen:chinPinun_convergence}
\end{equation} 
Using the strong convergence of $\chi_n$ given in \eqref{eq:high_pen:chin_convergence}, together with the weak convergences of $u_n$ and $\Pi_n u_n$ given in \eqref{eq:high_pen:cv_un}  and \eqref{eq:high_pen:Pinun_convergence}, respectively, we get in the limit 
\begin{equation*}
\x1_{\cB_\varepsilon} (u_\varepsilon - \Pi_\varepsilon u_\varepsilon) = 0 \; \mbox{in} \; (0, T) \times \O.
\end{equation*} 
It follows that 
\begin{equation*}
u_\varepsilon = \Pi_\varepsilon u_\varepsilon \; \mbox{in} \; Q_\varepsilon,
\end{equation*}
where $Q_\varepsilon$ is the space-time set defined in \eqref{eq:appx_sol:Qe_def}. By spherical symmetry, the pair $(u_\varepsilon, \x1_{\cB_\varepsilon})$ satisfies 
\begin{equation*}
\partial_t \x1_{\cB_\varepsilon} + \Pi_\varepsilon u_\varepsilon \cdot \nabla \x1_{\cB_\varepsilon} = 0 \; \mbox{in} \; \cD^\prime ((0, T) \times \xR^3), \quad \cB_\varepsilon (0)  = \cB_0.
\end{equation*}
Consequently, the velocity $u_\varepsilon$ is compatible with the system $\{\overline{\cB_\varepsilon}, \eta_\varepsilon\}$, where $\eta_\varepsilon$ denotes the propagator associated to $\Pi_\varepsilon u_\varepsilon$.

Following the same arguments as in \cite[Section 2.4]{Feireisl2001JMF},  \cite[Section 7.8.1]{Novotny2004OUP}, we further get, for all $1 \leq p < \frac{4}{3} \beta$, 
\begin{align}
 & \rho_n \rightarrow \rho_\varepsilon \; \mbox{in} \; \xCzero([0, T]; \xLn{\beta}_{\xweak}(\O)), \; \mbox{strongly in} \; \xLn{p}((0, T) \times \O), \\
&\rho_n u_n \rightarrow \rho_\varepsilon u_\varepsilon \; \mbox {weakly in} \; \xLtwo(0, T; \xLn{\frac{6 \beta}{\beta + 6}}(\O)), \; \mbox{weakly--* in} \; \xLinfty(0, T; \xLn{\frac{2\beta}{\beta + 1}}(\O)), \label{eq:high_pen:cv_rhonun} 
\end{align} 
These  convergence results allow us to pass to the limit $n \rightarrow \infty$ in the regularized continuity equation \eqref{eq:appx_sol:rhoe_continuity_equation}. 

We now consider the limit of the momentum equation \eqref{eq:appx_sol:rhoeue_momentum_equation}. Let $\varphi \in  {\cT}(Q_\varepsilon)$. Since $\overline {Q_\varepsilon}$ is a compact subset of $(0, T) \times \O$, there exists $\sigma_0 > 0$ and $\varphi_b \in \xCzero([0, T]; \cS)$ such that, for all $ t \in [0, T]$, 
\begin{equation*}
\varphi(t, \cdot) = \varphi_b(t, \cdot) \; \mbox {in} \;  \cB_\varepsilon^{\sigma_0}(t)
\end{equation*}
with for all $\sigma > 0 $, $\cB_\varepsilon^{\sigma} = \cB(x_\varepsilon, R_\varepsilon + \sigma)$. In particular, for $0 < \sigma \leq \sigma_0$, the following identity holds: 
\begin{equation}
\varphi = (1 - \x1_{\cB_\varepsilon^\sigma}) \varphi + \x1_{\cB_\varepsilon^\sigma} \varphi_b.
\label{eq:high_pen:varphi_1Bes_decomp}
\end{equation}
Moreover, using the strong convergence of $\chi_n$ given in \eqref{eq:high_pen:chin_convergence}, it follows that there exists $n_0 \in \xN$ such that, for all $n \geq n_0$ and $t \in [0, T]$, $$\cB_n(t) \subset \cB_\varepsilon^{\sigma_0}(t),$$ and consequently
\begin{equation}
\chi_n \x1_{\cB_\varepsilon^{\sigma_0}} = \chi_n, \quad (1 - \chi_n) (1 - \x1_{\cB_\varepsilon^{\sigma_0}}) = (1 - \x1_{\cB_\varepsilon^{\sigma_0}}), \quad \chi_n  (1 - \x1_{\cB_\varepsilon^{\sigma_0}}) = 0.
\label{eq:high_pen:chin_1Bes_relations}
\end{equation}
Using the decomposition \eqref{eq:high_pen:varphi_1Bes_decomp}, the identity 
\begin{equation}
\xD(\varphi_b) - \frac13 \xdiv (\varphi_b) = 0,
\label{eq:high_pen:phib_shear_viscosity}
\end{equation}
and the definition of $\mu_n = \mu_f (1 - \chi_n) + n \chi_n$, we have 
\begin{equation*}
\begin{aligned} 
\mu_n (\xD(\varphi) - \frac13 \xdiv (\varphi)) & = \mu_n (1 - \x1_{\cB_\varepsilon^{\sigma_0}}) (\xD(\varphi) - \frac13 \xdiv (\varphi)) +  \mu_n \x1_{\cB_\varepsilon^{\sigma_0}} (\xD(\varphi_b) - \frac13 \xdiv (\varphi_b)), \\ 
& = (\mu_f  (1-\chi_n) (1 - \x1_{\cB_\varepsilon^{\sigma_0}}) + n \chi_n  (1 - \x1_{\cB_\varepsilon^{\sigma_0}}) ) (\xD( \varphi) - \frac13 \xdiv (\varphi)).
\end{aligned}
\end{equation*}
Applying \eqref{eq:high_pen:chin_1Bes_relations} yields 
\begin{equation*}
\mu_n (\xD(\varphi) - \frac13 \xdiv (\varphi)) = \mu_f(1- \x1_{\cB_\varepsilon^{\sigma_0}}) (\xD (\varphi) - \frac13 \xdiv (\varphi)).
\end{equation*}
The weak convergence of $u_n$ given in \eqref{eq:high_pen:cv_un} and the strong convergence of $\chi_n$ given in \eqref{eq:high_pen:chin_convergence} therefore imply
\begin{multline}
\displaystyle  \int_0^T \int_\O 2 \mu_n \left(\xD (u_n) - \frac13 \xdiv (u_n) \right) : \left(\xD (\varphi) - \frac13 \xdiv (\varphi) \right)  + \nu_n \xdiv (u_n):  \xdiv (\varphi) \xdif y \xdif \tau \\ 
\xrightarrow[]{n \rightarrow \infty}  \displaystyle  \int_0^T \int_\O   2 \mu_f (1 - \x1_{\cB_\varepsilon^{\sigma_0}}) \left(\xD (u_\varepsilon) - \frac13 \xdiv (u_\varepsilon) \right) : \left(\xD (\varphi) - \frac13 \xdiv (\varphi) \right)  \xdif y \xdif \tau \\ 
+ \displaystyle  \int_0^T \int_\O \nu_\varepsilon \xdiv (u_\varepsilon):\xdiv (\varphi)  \xdif y \xdif \tau.
\label{eq:high_pen:limit_viscosity_term}
\end{multline}
Finally, using again the identity \eqref{eq:high_pen:phib_shear_viscosity} together with the definition of $\mu_\varepsilon = \mu_f (1 - \x1_{\cB_\varepsilon})$, we compute
\begin{equation*}
\begin{aligned}  
\mu_f  (1 - \x1_{\cB_\varepsilon^{\sigma_0}}) (\xD( \varphi) - \frac13 \xdiv (\varphi)) & = \mu_f  (1 - \x1_{\cB_\varepsilon}) (1 - \x1_{\cB_\varepsilon^{\sigma_0}}) (\xD(\varphi) - \frac13 \xdiv (\varphi)), \\ 
& = \mu_\varepsilon (1 - \x1_{\cB_\varepsilon^{\sigma_0}}) (\xD(\varphi) - \frac13 \xdiv (\varphi)) + \mu_\varepsilon \x1_{\cB_\varepsilon^{\sigma_0} }(\xD(\varphi_b) - \frac13 \xdiv (\varphi_b)), \\  
& = \mu_\varepsilon (\xD(\varphi) - \frac13 \xdiv (\varphi)).
\end{aligned} 
\end{equation*}
Consequently, the limit in \eqref{eq:high_pen:limit_viscosity_term} can be written as
\begin{equation*}
\displaystyle \int_0^T \int_\O 2 \mu_\varepsilon \left(\xD (u_\varepsilon) - \frac13 \xdiv (u_\varepsilon) \right) : \left( \xD (\varphi) - \frac13 \xdiv (\varphi) \right)  + \nu_\varepsilon \xdiv (u_\varepsilon):\xdiv (\varphi) \xdif y \xdif \tau.
\end{equation*} 
Similarly, using the identity \eqref{eq:high_pen:chin_1Bes_relations}, we obtain that the penalization term vanishes for $n \geq n_0$:
\begin{equation*}
\begin{aligned}
n \int_0^T \int_\O \chi_n (u_n - \Pi_n u_n) \cdot (\varphi - \Pi_n \varphi) & =  n  \int_0^T \int_\O \x1_{\cB_\varepsilon^{\sigma_0}} \chi_n (u_n - \Pi_n u_n) \cdot (\varphi - \Pi_n \varphi), \\ 
& = n  \int_0^T \int_\O \chi_n (u_n - \Pi_n u_n) \cdot (\varphi_b - \Pi_n \varphi_b) = 0  .
\end{aligned} 
\end{equation*}

We now focus on the convergence of the nonlinear term
\begin{equation}
\int_0^T \int_\O \rho_n u_n \otimes u_n : \xD (\varphi) \xdif y \xdif \tau. 
\label{eq:high_pen:adv_term_decomp}
\end{equation}
For $0 < \sigma \leq \sigma_0$, we write 
\begin{equation}
\int_0^T \int_\O \rho_n u_n \otimes u_n : \xD (\varphi) \xdif y \xdif \tau
- \int_0^T \int_\O \rho_\varepsilon u_\varepsilon \otimes u_\varepsilon : \xD (\varphi) \xdif y \xdif \tau = \displaystyle \sum_{i=1}^5 M_i(\sigma, n),
\end{equation}
where 
\begin{equation*}
\begin{aligned}
& M_1(\sigma, n) =  \int_0^T \int_\O (1 - \x1_{\cB_\varepsilon^\sigma}) \rho_n u_n \otimes u_n : \xD (\varphi) \xdif y \xdif \tau  - \int_0^T \int_\O (1 - \x1_{\cB_\varepsilon^\sigma}) \rho_\varepsilon u_\varepsilon \otimes u_\varepsilon : \xD (\varphi) \xdif y \xdif \tau \\ 
& M_2(\sigma, n) = \int_0^T \int_\O (\x1_{\cB_\varepsilon^\sigma} - \x1_{\cB_\varepsilon}) \rho_n u_n \otimes u_n : \xD (\varphi)  \xdif y \xdif \tau - \int_0^T \int_\O (\x1_{\cB_\varepsilon^\sigma} - \x1_{\cB_\varepsilon}) \rho_\varepsilon u_\varepsilon \otimes u_\varepsilon : \xD (\varphi)  \xdif y \xdif \tau, \\
& M_3(\sigma, n) = \int_0^T \int_\O (\x1_{\cB_\varepsilon} - \chi_n) \rho_n u_n \otimes u_n : \xD (\varphi)  \xdif y \xdif \tau, \\ 
& M_4(\sigma, n) = \int_0^T \int_\O \chi_n \rho_n u_n \otimes (u_n - \Pi_n u_n) : \xD (\varphi)  \xdif y \xdif \tau, \\ 
& M_5(\sigma, n) = \int_0^T \int_\O \chi_n \rho_n u_n \otimes \Pi_n u_n : \xD (\varphi)  \xdif y \xdif \tau - \int_0^T \int_\O \x1_{\cB_\varepsilon} \rho_\varepsilon u_\varepsilon \otimes \Pi_\varepsilon u_\varepsilon : \xD (\varphi)  \xdif y \xdif \tau.
\end{aligned} 
\end{equation*} 
This decomposition separates the contributions arising from the fluid region, the fluid–bubble interface, the approximation of the bubble domain, and the penalization errors. Using the convergence of $u_n$ given in \eqref{eq:high_pen:cv_un} together with the convergence of $\rho_n u_n$ given in \eqref{eq:high_pen:cv_rhonun}, we deduce that both $\rho u_n \otimes u_n$ and  $\rho u_\varepsilon \otimes u_\varepsilon$ belong to $\xLtwo(0, T; \xLn{\frac{6\beta}{4\beta + 3}}(\O)),$  and that the sequence $\{\rho_n u_n \otimes u_n\}$ is uniformly bounded in $\xLtwo(0, T; \xLn{\frac{6\beta}{4\beta + 3}}(\O))$. Moreover, since for all $1 \leq  p < \infty$, 
\begin{equation} 
\x1_{\cB_\varepsilon^\sigma} \xrightarrow []{\sigma \rightarrow 0} \x1_{\cB_\varepsilon} \; \mbox{in} \; \xCzero([0, T]; \xLn{p}(\O)), 
\label{eq:high_pen:1Bes_cvg}
\end{equation} 
it follows that 
\begin{equation}
M_2 (\sigma, n) \xrightarrow []{\sigma \rightarrow 0} 0 \; \mbox{uniformly in} \; n.
\label{eq:high_pen:M2_cvg}
\end{equation} 
Similarly, by applying \eqref{eq:high_pen:chin_convergence} in place of \eqref{eq:high_pen:1Bes_cvg}, we obtain
\begin{equation}
M_3(\sigma, n) \xrightarrow []{n \rightarrow \infty} 0 \; \mbox{uniformly in} \; \sigma.
\label{eq:high_pen:M3_cvg}
\end{equation}
Finally, the convergence \eqref{eq:high_pen:cv_rhonun} implies that $\rho_n u_n$ is uniformly bounded in $\xLinfty(0, T; \xLn{\frac{2\beta}{\beta + 1}}(\O))$ and combined with \eqref{eq:high_pen:chinPinun_convergence}, this yields 
\begin{equation}
M_4(\sigma, n) \xrightarrow []{n \rightarrow \infty} 0 \; \mbox{uniformly in} \; \sigma.
\label{eq:high_pen:M4_cvg}
\end{equation} 
Let $\theta > 0$, we deduce from \eqref{eq:high_pen:M2_cvg}-\eqref{eq:high_pen:M4_cvg} that there exists $(\sigma_1, n_1)$ such that for all $(\sigma, n)$ satisfying $0 < \sigma \leq \min \{\sigma_0, \sigma_1\}$ and $n \geq \max \{ n_0, n_1 \}$,  
\begin{equation*}
|M_2(\sigma, n)| \leq \theta/5, \quad |M_3(\sigma, n)| \leq \theta/5, \quad |M_4(\sigma, n)| \leq \theta/5.  
\end{equation*} 
The difficult terms are $M_1 (\sigma, n)$ and $M_5(\sigma, n)$, as so far we have only established weak convergence for the sequences $\{ \rho_n u_n \}$, $\{ u_n \}$ and $\{ \Pi_n u_n \}$. We first consider the convergence of the term $M_5(\sigma, n)$. Recall that, for all $n \in \xN$, there exist $ V_n, \omega_n, \Lambda_n \in \xLtwo(0, T)$ and $ V_\varepsilon, \omega_\varepsilon, \Lambda_\varepsilon \in \xLtwo(0, T)$ such that  
\begin{equation}
\Pi_n u_n = V_n + \omega_n \times (x - x_n) + \frac{\Lambda_n}{3} (x- x_n), \quad \Pi_\varepsilon u_\varepsilon = V_\varepsilon + \omega_\varepsilon \times (x - x_\varepsilon) + \frac{\Lambda_\varepsilon}{3} (x- x_\varepsilon), \label{eq:high_pen:Pibnun_expression}
\end{equation} 
and, by \eqref{eq:high_pen:Pinun_convergence},
\begin{equation}
V_n \rightarrow V_\varepsilon, \quad \omega_n \rightarrow \omega_\varepsilon, \quad \Lambda_n \rightarrow \Lambda_\varepsilon \; \mbox{weakly in} \; \xLtwo(0, T), \quad x_n \rightarrow x_\varepsilon \; \mbox{in} \; \xCzero([0, T]).  
\label{eq:high_pen:VoL_weak_convergence}
\end{equation} 
We introduce the functions $\tilde {V}_n, \tilde {\omega}_n, \tilde {\Lambda}_n$ and  $\tilde {V}_\varepsilon, \tilde {\omega}_\varepsilon, \tilde {\Lambda}_\varepsilon$ in $\xLtwo(0, T)$ defined, for $i \in \{1, 2, 3\}$, by 
\begin{align*}
& \tilde {V}_n \cdot e_i = (\rho_n u_n, \, \chi_n e_i)_\O,  \quad \tilde {\omega}_n \cdot e_i = (\rho_n u_n, \chi_n e_i \times (x - x_n))_\O, \quad \tilde {\Lambda}_n(t) = (\rho_n u_n, \chi_n (x - x_n))_\O, \\ 
& \tilde {V}_\varepsilon \cdot e_i = (\rho_\varepsilon u_\varepsilon, \, \x1_{\cB_\varepsilon} e_i)_\O, \quad \tilde {\omega}_\varepsilon \cdot e_i = (\rho_\varepsilon u_\varepsilon, \x1_{\cB_\varepsilon} e_i \times (x - x_\varepsilon))_\O, \quad \tilde {\Lambda}_\varepsilon(t) = (\rho_\varepsilon u_\varepsilon, \x1_{\cB_\varepsilon} (x - x_\varepsilon))_\O.
\end{align*} 
where $\{e_i\}$ is the canonical orthonormal basis of $\xR^3$.  Since, for $n \geq n_0$, $$ \chi_n \xD (\varphi) = \x1_{\cB_\varepsilon} \xD (\varphi) = \xdiv (\varphi_b),$$ it follows that
\begin{equation*}
M_5(\sigma, n) = \int_0^T \xdiv(\varphi_b) (V_n \cdot \tilde {V}_n + \omega_n \cdot \tilde {\omega}_n + \Lambda_n \tilde {\Lambda}_n -  V_\varepsilon \cdot \tilde {V}_\varepsilon - \omega_\varepsilon \cdot \tilde {\omega}_\varepsilon - \Lambda_\varepsilon \tilde {\Lambda}_\varepsilon) \xdif \tau .
\end{equation*} 
Consequently, to establish the convergence of the term $M_5(\sigma, n)$, it  is sufficient to prove the strong convergence in time of $(\tilde {V}_n, \tilde {\omega}_n, \tilde {\Lambda}_n)$ to $(\tilde {V}_\varepsilon, \tilde {\omega}_\varepsilon, \tilde {\Lambda}_\varepsilon)$. Let $i \in \{1,2,3\}$, the idea is to use $$\chi_n e_i, \quad  \chi_n e_i \times (x - x_n), \; \chi_n (x - x_n), $$ as test functions in the momentum equation \eqref{eq:appx_sol:rhonun_momentum_equation}. However, since these functions are not regular enough to be used directly as test functions, we introduce $\nu$-regularizations defined by 
\begin{equation*}
\tilde {V}_{i, n}^\nu = (\rho_n u_n, \chi_n^\nu e_i)_\O,  \quad \tilde {\omega}_{i, n}^\nu= (\rho_n u_n, \chi_n^\nu e_i \times (x - x_n))_\O, \quad \tilde {\Lambda}_n(t) = (\rho_n u_n, \chi_n^\nu (x - x_n))_\O,
\end{equation*} 
where $\chi_n^\nu$ is a smooth approximation of the color function $\chi_n$ defined by 
\begin{equation*}
\chi_n^\nu = \cK^\nu \star \x1_{\cB(x_n, (1 + \nu) R_n)}, 
\end{equation*}
with $\cK^\nu$ a standard mollifier in $\Omega$. In particular, the following properties hold:
\[ \chi_n^\nu \in \cD((0, T) \times \O), \quad  \chi_n^\nu \xrightarrow[]{\nu \rightarrow 0} \chi_n \; \mbox{in} \; \xCzero([0, T]; \xLn{p}(\O)) \; \mbox {uniformly in} \; n \] and  
\[ \left\{
\begin{aligned} 
& \chi_n (\Pi_n (\chi_n^\nu e_i) - \chi_n^\nu e_i) = 0, \\
& \chi_n (\Pi_n (\chi_n^\nu e_i \times (x - x_n)) - \chi_n^\nu e_i \times (x - x_n)) = 0, \\  
& \chi_n (\Pi_n (\chi_n^\nu (x - x_n)) - \chi_n^\nu  (x - x_n)) = 0.
\end{aligned} 
\right. 
\] 
\begin{lmm}
For all $\nu > 0$, there exists a non-decreasing map $\phi_\nu$ such that, for all $i \in \{1,2,3\}$, 
\begin{equation*}
\tilde {V}_{i, \phi_\nu(n)}^\nu \rightarrow \tilde {V}_i^\nu, \quad \tilde {\omega}_{i, \phi_\nu(n)}^\nu \rightarrow \tilde {\omega}_i^\nu, \quad \tilde {\Lambda}_{\phi_\nu(n)}^\nu \rightarrow \tilde {\Lambda}^\nu  \; \mbox{in} \; \xCzero([0, T])
\end{equation*} 
\label{lem:high_pen:tdV_tdo_tdL_convergence}
\end{lmm}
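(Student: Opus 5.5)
The plan is an Arzelà--Ascoli argument in time for the scalar functions $t \mapsto \tilde V^\nu_{i,n}(t)$, $\tilde\omega^\nu_{i,n}(t)$, $\tilde\Lambda^\nu_n(t)$, with $\nu>0$ fixed. We show these families are uniformly bounded and equicontinuous on $[0,T]$, extract a uniformly convergent subsequence $\phi_\nu(n)$, and identify the limit as the corresponding quantity built from $(\rho_\varepsilon u_\varepsilon, \x1_{\cB_\varepsilon})$.

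\emph{Uniform bound.} Since $\rho_n u_n$ is bounded in $\xLinfty(0,T;\xLn{\frac{2\beta}{\beta+1}}(\O))$ by \eqref{eq:high_pen:cv_rhonun}, and $\chi_n^\nu e_i$, $\chi_n^\nu e_i\times(x-x_n)$, $\chi_n^\nu(x-x_n)$ are bounded in $\xLinfty((0,T)\times\O)$ uniformly in $n$, the three families are bounded in $\xLinfty(0,T)$.

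\emph{Equicontinuity.} This is the main step. Denote by $\psi_n$ any of the three test fields. We use $\psi_n$, truncated by a cutoff in time on $[t_1,t_2]$, in the momentum equation \eqref{eq:appx_sol:rhonun_momentum_equation}. A time-dependent test function is admissible there because, for fixed $\nu$, $\psi_n$ is smooth in space with bounds depending only on $\nu$. Its time derivative is controlled by $|\dot x_n| + |\dot R_n|$, which is bounded in $\xLtwo(0,T)$ uniformly in $n$ by Proposition~\ref{prop:smlrt-prop:weak_sequential_continuity}. The penalization term vanishes identically thanks to the identities $\chi_n(\Pi_n\psi_n - \psi_n)=0$ stated just before the lemma. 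This is why the mollification is taken on the enlarged ball $\cB(x_n,(1+\nu)R_n)$: there $\psi_n$ coincides with an element of $\cS$ on $\cB_n$ and its neighbourhood.

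We then write $\int_\O \rho_n u_n\cdot\psi_n\,(t_2) - \int_\O \rho_n u_n\cdot\psi_n\,(t_1)$ as $\int_{t_1}^{t_2}$ of the following terms:
\begin{itemize}
\item $\rho_n u_n\cdot\partial_t\psi_n$,
\item $\rho_n u_n\otimes u_n:\xD(\psi_n)$,
\item the viscous terms,
\item $p_\delta(\rho_n,\chi_n)\xdiv\psi_n$,
\item $\varepsilon\nabla u_n\nabla\rho_n\cdot\psi_n$,
\item gravity,
\item surface tension.
\end{itemize}
The viscous term carries the large coefficient $n\chi_n$. This is harmless because $\xD(\psi_n)-\frac13\xdiv(\psi_n)\xI_3=0$ on $\cB_n$, since $\psi_n$ equals a field in $\cS$ there, so only $\mu_f$ survives. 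Each term is bounded by $\int_{t_1}^{t_2} h_n(\tau)\,d\tau$, with $h_n$ bounded in $\xLn{q}(0,T)$ for some $q>1$ uniformly in $n$:
\begin{itemize}
\item for the convective term, $\rho_n u_n\otimes u_n\in\xLtwo(0,T;\xLone)$;
\item for the pressure, $\rho_n\in\xLn{\beta+1}((0,T)\times\O)$ via \eqref{eq:appx_sol:rhoe_impvd_regularity}-type bounds (at level $n$, $\rho_n\in \xLinfty(0,T;\xLbeta)$ already suffices);
\item for the $\varepsilon$ term, $\sqrt\varepsilon\nabla\rho_n\in\xLtwo$ and $\nabla u_n\in\xLtwo$, giving $\xLone(0,T)$.
\end{itemize}
The $\xLone$-in-time terms need uniform integrability rather than just boundedness. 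To get it we use the $\xLtwo(0,T;\xLn{\frac{6\beta}{\beta+6}})$ bound on $\rho_n\nabla\rho_n$-type products together with $\psi_n\in\xLinfty$, as in \cite{Novotny2004OUP}. This yields $|f_n(t_2)-f_n(t_1)|\le C|t_2-t_1|^{\alpha}$ for some $\alpha>0$. If a genuinely $\xLone$ term remains, we fall back on an Aubin--Lions type argument: bounds in $\xWn{1,1}(0,T)\hookrightarrow$ BV, combined with weak continuity, via Helly's theorem.

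\emph{Identification of the limit.} Arzelà--Ascoli gives $\phi_\nu$ and uniform convergence to some limit. For each $t$, $\psi_n(t)\to\psi_\varepsilon(t)$ strongly in every $\xLn{p}$, using $x_n\to x_\varepsilon$ and $R_n\to R_\varepsilon$ in $\xCzero$. Also $\rho_n u_n(t)\rightharpoonup \rho_\varepsilon u_\varepsilon(t)$ weakly in $\xLn{\frac{2\beta}{\beta+1}}$ for every $t$, obtained from $\rho_n u_n\to\rho_\varepsilon u_\varepsilon$ in $\xCzero([0,T];\xLn{\frac{2\beta}{\beta+1}}_{\xweak})$, itself a consequence of the same equicontinuity with smooth test functions. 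Hence the limit is $(\rho_\varepsilon u_\varepsilon,\psi_\varepsilon)_\O =: \tilde V_i^\nu,\tilde\omega_i^\nu,\tilde\Lambda^\nu$.

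The main obstacle is the equicontinuity step. The moving, $n$-dependent test function must be shown to kill the $n$-dependent penalization and shear terms exactly, and every remaining term must be controlled with time integrability better than $\xLone$, uniformly in $n$.
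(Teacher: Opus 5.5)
Your core argument is the paper's. You apply Arzelà--Ascoli in time, with equicontinuity obtained by testing the $n$-level momentum equation with $\chi_n^\nu e_i$, $\chi_n^\nu e_i\times(x-x_n)$ and $\chi_n^\nu(x-x_n)$. These coincide with fields of $\cS$ on a neighbourhood of $\cB_n$, provided the mollification radius is below $\nu R_0/2$, so they annihilate both the penalization term and the $n\chi_n$ part of the shear viscosity.

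The one structural difference is how time enters the test function. The paper freezes it at $t_2$ and estimates $(\rho_n u_n(t_2),\varphi^\nu_{i,n}(t_2)-\varphi^\nu_{i,n}(t_1))_\O$ separately through the transport equation for $\x1_{\cB(x_n,(1+\nu)R_n)}$. You let the test function move instead. Your version kills the $n$-dependent terms exactly at every time. The frozen field does so only while $\cB_n(\tau)$ stays in the plateau of $\chi_n^\nu(t_2)$, i.e.\ for $|t_2-t_1|$ small, which is enough for equicontinuity. The cost of your version is a density argument extending the weak formulation to test functions that are only $\xHone$ in time.

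Two steps of your write-up fail as written, though both are repairable.

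\emph{The $\varepsilon$-term.} The product $\varepsilon\nabla u_n\nabla\rho_n$ has no factor $\rho_n$, so a bound on ``$\rho_n\nabla\rho_n$-type products'' is beside the point. The Helly fallback cannot deliver the lemma either: boundedness in $\xWn{1,1}(0,T)$ or BV gives only pointwise convergence of a subsequence, not convergence in $\xCzero([0,T])$. What you need, and what the paper uses in its bound of the right-hand side, is the parabolic-regularity estimate of $\varepsilon\nabla u_n\nabla\rho_n$ in $\xLn{\frac{5\beta-3}{4\beta}}((0,T)\times\O)$. This estimate is uniform in $n$, since it only uses the regularized continuity equation and the $n$-independent energy bounds. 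Because $\frac{5\beta-3}{4\beta}>1$, it gives the H\"older modulus you want.

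\emph{The identification of the limit.} The convergence $\rho_n u_n\to\rho_\varepsilon u_\varepsilon$ in $\xCzero([0,T];\xLn{\frac{2\beta}{\beta+1}}_{\xweak}(\O))$ does \emph{not} follow from equicontinuity with smooth test functions. Take a test function that is not in $\cS$ near $\cB_n$. The penalization and $n\chi_n$-shear terms are then only bounded by $\sqrt n$ times quantities controlled by the energy estimate, so there is no equicontinuity uniform in $n$. This is exactly why the special fields $\chi_n^\nu e_i,\dots$ are introduced. It is also why the paper claims only weak convergence of $\rho_n u_n$ in $\xLtwo(0,T;\cdot)$ and weak-$*$ convergence in $\xLinfty(0,T;\cdot)$.

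The lemma does not require identifying $\tilde V_i^\nu$, so you can simply drop this step. If you want it, identify the limit in $\cD'(0,T)$ by testing against $\zeta\in\cD(0,T)$. Use the weak-$*$ convergence of $\rho_n u_n$ together with the strong convergence of $\psi_n$ in $\xCzero([0,T];\xLn{p}(\O))$, as the paper does in Lemma~\ref{lem:high_pen:VoL_convergence}.
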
 
\begin{proof}[Proof of Lemma~\ref{lem:high_pen:tdV_tdo_tdL_convergence}]
We only prove the result for $\tilde {V}_n^\nu$, the proof for $ \tilde {\omega}_n^\nu$ and $\tilde {\Lambda}_n^\nu$ follows the same arguments. Let $i \in \{1, 2, 3\}$ and $t_1, \, t_2 \in [0, T]$, setting $\varphi_{i, n}^\nu = \chi_n^\nu e_i$ as a test function in \eqref{eq:appx_sol:rhonun_momentum_equation}, we have on one hand, 
\begin{equation} 
\begin{aligned} 
&((\rho_n u_n)(t_2) - (\rho_n u_n)(t_1), \varphi_{i, n}^\nu(t_2))_\O =  \int_{t_1}^{t_2} \int_{\O} \rho_n u_n \otimes u_n : \xD(\varphi_{i, n}^\nu(t_2)) \xdif y  \xdif \tau \\
&- \int_{t_1}^{t_2} \int_{\O} \mu_f \left(\xD (u_n) - \frac13 \xdiv (u_n) \xI_3 \right) : \left( \xD(\varphi_{i, n}^\nu(t_2)) - \frac13 \xdiv (\varphi_{i, n}^\nu(t_2)) \xI_3 \right)  \xdif y \xdif \tau \\ 
&+ \int_{t_1}^{t_2} \int_\O \nu_n \xdiv (u_n) \xdiv (\varphi_{i, n}^\nu(t_2)) \xdif y \xdif \tau + \int_{t_1}^{t_2} \int_\O p(\rho_n) \xdiv (\varphi_{i, n}^\nu(t_2)) \xdif y \xdif \tau \\ 
&- \varepsilon \int_{t_1}^{t_2} \int_\O (\nabla \rho_n \cdot \nabla) u_n \cdot \varphi_{i, n}^\nu(t_2) \xdif y \xdif \tau - \int_{t_1}^{t_2} \int_\O \rho_n g \cdot \varphi_{i, n}^\nu(t_2) \xdif y \xdif \tau  + \int_{t_1}^{t_2} \int_\O  \frac{\kappa_n}{R_n} \xdiv (\varphi_{i, n}^\nu(t_2)) \xdif y \xdif \tau.
\end{aligned}
\label{eq:high_pen:VoL_identity}
\end{equation} 
Using Hölder inequalities together with uniform bounds on $\rho_n u_n$, $u_n$, $\rho_n$ and the regularity of $\varphi_{i, n}^\nu = \chi_n^\nu e_i$, we deduce that the right-hand side of \eqref{eq:high_pen:VoL_identity} is bounded by 
\begin{equation}
\begin{aligned}
& \int_{t_1}^{t_2} \|\rho_n u_n \otimes u_n\|_{\xLn{\frac{6\beta}{4\beta + 3}}(\O)} \|\nabla \varphi_{i, n}^\nu(t_2) \|_{\xLn{\frac{6\beta}{2\beta - 3}}(\O)}  + \left(\frac{4}{3} \mu_f + \nu_f\right) \|\nabla u_n\|_{\xLtwo(\O)} \|\nabla \varphi_{i, n}^\nu(t_2) \|_{\xLtwo(\O)} \\ 
& + \nu_b \|\nabla u_n\|_{\xLtwo(\O)} \|\nabla \varphi_{i, n}^\nu(t_2) \|_{\xLtwo(\O)} + a_f \|\rho_n\|_{\xLn{\beta + 1}(\O)}^{\gamma_f} \|\nabla \varphi_{i, n}^\nu(t_2)\|_{\xLn{\frac{\beta + 1}{\beta + 1 - \gamma_f}}(\O)} \\ 
& + a_b \|\rho_n\|_{\xLn{\beta + 1}(\O)}^{\gamma_b} \|\nabla \varphi_{i, n}^\nu(t_2)\|_{\xLn{\frac{\beta + 1}{\beta + 1 - \gamma_b}}(\O)} + \delta \|\rho_n\|_{\xLn{\beta + 1}(\O)}^\beta \|\nabla \varphi_{i, n}^\nu(t_2)\|_{\xLn{\beta + 1}(\O)} \\ 
& + \varepsilon \|\nabla \rho_n \cdot \nabla u_n \|_{\xLn{\frac{5 \beta-3}{4 \beta}}(\O)} \|\varphi_{i, n}^\nu(t_2)\|_{\xLn{\frac{5 \beta - 3}{\beta -3}}(\O)} + \|\rho_n g\|_{\xLn{\beta + 1}(\O)} \|\varphi_{i, n}^\nu(t_2)\|_{\xLn{\frac{\beta + 1}{\beta}}(\O)} \\ 
& + \frac{2 \kappa_b}{R_0} \|\nabla \varphi_{i, n}^\nu(t_2)\|_{\xLone(\O)} \xdif \tau, 
\end{aligned} 
\label{eq:high_pen:VoL_estimate}
\end{equation}
where for $ 1 \leq p < \infty$, by Young's inequality,
\begin{align*}
\|\varphi_{n, i}^\nu\|_{\xWn{1, \infty}(0, T; \xLn{p}(\O))} \leq \|\x1_{\cB(x_n, (1 + \nu) R_n)}\|_{\xLinfty((0, T) \times \O)} \|\cK^\nu\|_{\xWn{1, 1}(\O)} \leq \|\cK^\nu\|_{\xWn{1, 1}(\O)}.
\end{align*} 
On the other hand,
\begin{equation*}
\begin{aligned} 
 (\rho_n u_n(t_2), \varphi_n^\nu(t_2) - \varphi_n^\nu(t_1))_\O  = & \left(\rho_n u_n(t_2) \cdot e_i, \int_{t_1}^{t_2} \cK^\nu * \partial_t \x1_{\cB(x_n, (1 + \nu) R_n)} \xdif \tau \right)_\O \\
=&  - \int_{t_1}^{t_2} \bigg(\rho_n u_n(t_2) \cdot e_i, \cK^\nu*\left(\Pi_n u_n \cdot \nabla \x1_{\cB(x_n, (1 + \nu) R_n)}\right) \bigg)_\O  \xdif \tau \\
=& - \int_{t_1}^{t_2} \bigg( \rho_n u_n(t_2) \cdot e_i, \nabla \cK^\nu*\left(\Pi_n u_n \x1_{\cB(x_n, (1 + \nu) R_n)}\right) \bigg)_\O \xdif \tau  \\ 
& + \int_{t_1}^{t_2} \bigg(\rho_n u_n(t_2) \cdot e_i, \cK^\nu*\left(\xdiv (\Pi_n u_n) \x1_{\cB(x_n, (1 + \nu) R_n)}\right) \bigg)_\O \xdif \tau.
\end{aligned}
\end{equation*}
Consequently, using Holder's and Young's inequality, we obtain
\begin{multline*}
| (\rho_n u_n(t_2), \varphi_n^\nu (t_2) - \varphi_n^\nu (t_1))_\O | \\ 
 \leq \|\rho_n u_n\|_{\xLinfty(0, T; \xLn{\frac{2\beta}{\beta + 1}}(\O))} \|\x1_{\cB(x_n, (1 + \nu) R_n)}\|_{\xLinfty((0, T)\times \O)} \|\cK^\nu\|_{\xWn{1, 1}(\O)} \int_{t_1}^{t_2} \|\Pi_n u_n\|_{\xWn{1, \frac{2\beta}{2\beta - 1}}(\O)} \xdif \tau. 
\end{multline*} 
We deduce that $\tilde {V}_{i, \nu}^n$ is uniformly equicontinuous on $(0,T)$ uniformly in $n$ (although not in $\nu$). The Arzela-Ascoli theorem implies that there exists a non-decreasing function $\phi_\nu : \xN \rightarrow \xN$ and $\tilde {V}_i^\nu \in \xCzero([0, T])$ such that
\begin{equation*}
\tilde {V}_{i, \phi_\nu(n)}^\nu \xrightarrow[]{n \rightarrow \infty} \tilde {V}_i^\nu \; \mbox {in} \; \xCzero ([0, T]). 
\end{equation*} 
This concludes the proof of the lemma.
\end{proof} 
The next lemma addresses the convergence of $\{\tilde {V}_n\}$, $\{\tilde {\omega}_n\}$ and $\{\tilde {\Lambda}_n \}$. 
\begin{lmm}
Up to a subsequence, for all $i \in \{1, 2, 3\}$,  
\begin{equation*}
\tilde {V}_{i, n} \rightarrow \tilde {V}_{i, \varepsilon}, \quad
\tilde {\omega}_{i, n} \rightarrow \tilde {\omega}_{i, \varepsilon}, \quad
\tilde {\Lambda}_n \rightarrow \tilde {\Lambda}_\varepsilon \; \mbox {in} \; \xCzero([0, T]). 
\end{equation*} 
\label{lem:high_pen:VoL_convergence}
\end{lmm}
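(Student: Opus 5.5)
The plan is a diagonal argument in $\nu$ that combines Lemma~\ref{lem:high_pen:tdV_tdo_tdL_convergence} with a regularization error bound which is uniform in $n$. We treat only $\tilde V_{i,n}$; the cases $\tilde\omega_{i,n}$ and $\tilde\Lambda_n$ are identical, the only change being the bounded weights $e_i\times(x-x_n)$ and $(x-x_n)$, which are bounded on $\O$. The whole argument rests on the estimate
\begin{equation*}
\sup_{t\in[0,T]}|\tilde V_{i,n}(t)-\tilde V^\nu_{i,n}(t)| \le \|\rho_n u_n\|_{\xLinfty(0,T;\xLn{\frac{2\beta}{\beta+1}}(\O))}\,\sup_{t\in[0,T]}\|\chi_n(t)-\chi_n^\nu(t)\|_{\xLn{\frac{2\beta}{\beta-1}}(\O)} .
\end{equation*}
The first factor is bounded uniformly in $n$ by \eqref{eq:high_pen:cv_rhonun}. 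The second factor tends to $0$ as $\nu\to0$ uniformly in $n$, by the stated property of $\chi_n^\nu$.

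To justify that uniformity explicitly, note that $\chi_n-\chi_n^\nu$ is supported in the shell $\cB(x_n,(1+\nu)R_n+\nu)\setminus\cB(x_n,R_n-\nu)$, up to the mollifier width. The radii $R_n$ are bounded above (because $\cB_n\subset\O$) and below by $R_0/2$. Hence the measure of this shell is $O(\nu)$, with a constant independent of $n$ and $t$.

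First I would extract a common subsequence. Take $\nu_k=1/k$ and apply Lemma~\ref{lem:high_pen:tdV_tdo_tdL_convergence} successively, each time extracting from the previous subsequence. The diagonal subsequence $n_j$ then gives $\tilde V^{\nu_k}_{i,n_j}\to \tilde V^{\nu_k}_i$ in $\xCzero([0,T])$ for every $k$.

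Next comes a Cauchy argument. By the triangle inequality, $\|\tilde V_{i,n_j}-\tilde V_{i,n_l}\|_{\xCzero}$ is at most $2\sup_n\|\tilde V_{i,n}-\tilde V^{\nu_k}_{i,n}\|_{\xCzero}$ plus $\|\tilde V^{\nu_k}_{i,n_j}-\tilde V^{\nu_k}_{i,n_l}\|_{\xCzero}$. Choosing $k$ large makes the first term small, and then $j,l$ large makes the second small. So $\tilde V_{i,n_j}$ is Cauchy in $\xCzero([0,T])$, and its limit is some continuous function $W_i$.

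The main obstacle is identifying $W_i$ with $\tilde V_{i,\varepsilon}=(\rho_\varepsilon u_\varepsilon,\x1_{\cB_\varepsilon}e_i)_\O$, because $\rho_n u_n$ converges only weakly. For fixed $t$, I would use two facts. First, $\rho_n u_n(t)\to\rho_\varepsilon u_\varepsilon(t)$ weakly in $\xLn{\frac{2\beta}{\beta+1}}(\O)$ for every $t$; this follows from the uniform bound together with the equicontinuity of $t\mapsto(\rho_nu_n,\varphi)_\O$ on a dense set of test functions (the standard $\xCzero([0,T];\xL_{\xweak})$ argument, as in \cite[Section 7.8.1]{Novotny2004OUP}). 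Second, $\chi_n(t)\to\x1_{\cB_\varepsilon}(t)$ strongly in $\xLn{\frac{2\beta}{\beta-1}}$ by \eqref{eq:high_pen:chin_convergence}. Weak times strong convergence then gives $\tilde V_{i,n}(t)\to\tilde V_{i,\varepsilon}(t)$ pointwise, so $W_i=\tilde V_{i,\varepsilon}$.

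If the pointwise weak continuity proves delicate, I would identify the limit only in $\mathcal D'(0,T)$ instead. This uses the weak convergence in $\xLtwo(0,T;\xLn{\frac{6\beta}{\beta+6}})$ and is sufficient, since the limit is already known to be continuous. Because the limit is identified, convergence holds along the chosen subsequence, as claimed.
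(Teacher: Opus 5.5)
Your proposal follows essentially the same route as the paper: a regularization error $\tilde V_{i,n}-\tilde V^\nu_{i,n}$ that is small uniformly in $n$, then the compactness of Lemma~\ref{lem:high_pen:tdV_tdo_tdL_convergence} at fixed $\nu$, then a Cauchy/diagonal argument in $\xCzero([0,T])$, then identification of the limit. Your nested extraction over $\nu_k=1/k$ is a cleaner version of the paper's diagonal step, and your shell estimate makes explicit the uniformity in $n$ that the paper only asserts.

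The one step to drop is your primary identification, which relies on weak convergence of $(\rho_n u_n)(t)$ in $\xLn{\frac{2\beta}{\beta+1}}(\O)$ for every $t$. The standard $\xCzero([0,T];\xL^{\frac{2\beta}{\beta+1}}_{\xweak})$ argument you cite does not give this. Fix a generic $\varphi\in\cD(\O)$. In \eqref{eq:appx_sol:rhonun_momentum_equation}, the energy bound only controls two terms as $O(\sqrt n)$: the penalization term $n\int\chi_n(u_n-\Pi_n u_n)\cdot(\varphi-\Pi_n\varphi)$, and the shear term $\int 2n\chi_n(\xD(u_n)-\frac13\xdiv(u_n)\xI_3):(\xD(\varphi)-\frac13\xdiv(\varphi)\xI_3)$. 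Hence $t\mapsto(\rho_n u_n(t),\varphi)_\O$ is not equicontinuous uniformly in $n$. This is exactly why Lemma~\ref{lem:high_pen:tdV_tdo_tdL_convergence} uses the bubble-compatible test functions $\chi_n^\nu e_i$, for which both terms vanish.

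Your fallback is what you should keep: identify the limit in $\cD'(0,T)$, using the weak convergence of $\rho_n u_n$ in $\xLtwo(0,T;\xLn{\frac{6\beta}{\beta+6}}(\O))$, the strong convergence of $\chi_n$, and the continuity of the limit. This is precisely the paper's argument, and it suffices.
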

\begin{proof}[Proof of Lemma~\ref{lem:high_pen:VoL_convergence}]
Similarly to Lemma~\ref{lem:high_pen:tdV_tdo_tdL_convergence}, we focus on the proof for $\tilde {V}_n$, as the proof is analogous for $\tilde{\omega}_n$ and $\tilde{\Lambda}_n$. Let $i \in \{1, 2, 3\}$ and consider the regularized sequence $\{\tilde {V}_{i, n}^\nu\}$. By construction, we have
\begin{equation*}
\tilde {V}_{i, n}^\nu \xrightarrow[]{\sigma \rightarrow 0}  \tilde {V}_{i, n} \; \mbox{in} \; \xCzero([0, T])  \; \mbox{uniformly in} \; n.
\end{equation*}
In particular, for any $\theta > 0$, there exists $\nu > 0$ sufficiently small such that for all $n \in \xN$, 
\begin{equation*}
\|\tilde {V}_{i, n}^\nu - \tilde {V}_{i, n}\|_{\xLinfty(0, T)} \leq \theta/3.
\end{equation*} 
By Lemma~\ref{lem:high_pen:tdV_tdo_tdL_convergence}, there exists $n_0(\nu) \in \xN$ such that for $m, l \geq n_0(\nu)$, 
\begin{equation*} 
\|V_{i, \phi_\nu(m)}^\nu - V_{i, \phi_\nu(l)}^\nu \|_{\xLinfty(0, T)} \leq  \theta/3.
\end{equation*} 
Consequently, for $m, l \geq n_0(\nu)$,  
\begin{multline}
\|\tilde {V}_{i, \phi_\nu(m)} - \tilde {V}_{i, \phi_\nu(l)}\|_{\xLinfty(0, T)} \\ 
\leq \|\tilde {V}_{i, \phi_\nu(m)} - \tilde {V}_{i,\phi_\nu(m)}^\nu \|_{\xLinfty(0, T)} + \|\tilde {V}_{i, \phi_\nu(m)}^\nu - \tilde {V}_{i, \phi_\nu(l)}^\nu \|_{\xLinfty(0, T)} +  \| \tilde {V}_{i, \phi_\nu(l)}^\nu - \tilde {V}_{i, \phi_\nu(l)}\|_{\xLinfty(0, T)} \leq \theta.
\label{eq:high_pen:tVip_tri_ineq}
\end{multline}
Applying \eqref{eq:high_pen:tVip_tri_ineq} with $\theta = 1, \frac12, \frac14, \cdots $ and using a standard diagonal argument, we can extract a subsequence $\tilde {V}_{i, \psi(n)}$ that is Cauchy in $\xCzero([0, T])$ and hence converges in $\xCzero([0, T])$. To identify the limit, we consider $\varphi \in \cD(0, T)$. Using the weak convergence of $\rho_n u_n$ given in \eqref{eq:high_pen:cv_rhonun}, the strong convergence of $\chi_n$ given in \eqref{eq:high_pen:chin_convergence}, we have
\begin{equation*}
\int_0^T \varphi (\rho_n u_n, \chi_n e_i)_\O \xdif \tau \rightarrow \int_0^T \varphi (\rho_\varepsilon u_\varepsilon, \chi_\varepsilon e_i)_\O \xdif \tau. 
\end{equation*} 
This identifies the limit as $\tilde{V}_{i,\varepsilon}$ and concludes the proof of the lemma.   
\end{proof}
From the convergences of the sequences $\{V_n\}$, $\{\omega_n\}$ and $\{\Lambda_n\}$ given in \eqref{eq:high_pen:VoL_weak_convergence} and Lemma~\ref{lem:high_pen:VoL_convergence}, we deduce 
\begin{equation*}
V_n \cdot \tilde {V}_n \rightarrow V_\varepsilon \cdot \tilde {V}_\varepsilon, \quad \omega_n \cdot \tilde {\omega}_n \rightarrow \omega_\varepsilon \cdot \tilde {\omega}_\varepsilon, \quad \Lambda_n \tilde {\Lambda}_n \rightarrow \Lambda_\varepsilon \tilde {\Lambda}_\varepsilon \; \mbox{in} \; \xLtwo(0, T),   
\end{equation*} 
and there exists $n_2$ such that for all $n$ satisfying  $n \geq \{n_0, n_2\}$,   
\begin{equation*}
|M_5(\sigma, n)| \leq \frac{\theta}{5}.
\end{equation*} 
For the last term $M_1(\sigma, n)$, we use a result presented in \cite [Section 7]{Feireisl2003ARM} in the context of fluid-solid interaction. Let $\sigma_m = \min \{\sigma_0, \sigma_1\}$, we have 
\begin{equation*}
\rho_n u_n \otimes u_n \rightarrow \rho_\varepsilon u_\varepsilon \otimes u_\varepsilon \; \mbox {in} \; \xLn{\frac{6\beta}{4\beta + 3}}(Q_{f ,\varepsilon}^{\sigma_m})
\end{equation*}
where 
\begin{equation*}
Q_{f, \varepsilon}^{\sigma_m} = (0, T) \backslash \overline {Q_\varepsilon^{\sigma_m}}, \quad \overline{Q_\varepsilon^{\sigma_m}} = \{ (t, x) \in (0, T) \times \O \; | \; x \in \overline {\cB_\varepsilon^{\sigma_m}(t)} \}. 
\end{equation*}
Consequently, there exists $n_3 \in \xN$, possibly depending on $\sigma_m$, such that for all $n \geq n_3$, 
\begin{equation*}
|M_5(\sigma_m, n)| \leq \theta/5.
\end{equation*}
Finally, we conclude that for $n \geq \max \{n_0, n_1, n_2, n_3\}$,  
\begin{equation*}
\left| \displaystyle \int_0^T \int_\O \rho_n u_n \otimes u_n : \xD (\varphi) \xdif y \xdif \tau - \displaystyle \int_0^T \int_\O \rho_\varepsilon u_\varepsilon \otimes u_\varepsilon : \xD (\varphi) \xdif y \xdif \tau \right| \leq \theta, 
\end{equation*} 
which leads to 
\begin{equation*}
 \displaystyle \int_0^T \int_\O \rho_n u_n \otimes u_n : \xD (\varphi) \xdif y \xdif \tau \rightarrow \displaystyle \int_0^T \int_\O \rho_\varepsilon u_\varepsilon \otimes u_\varepsilon : \xD (\varphi) \xdif y \xdif \tau. 
\end{equation*} 
The convergence results for the pressure terms in the momentum equation  and the energy estimate are derived using arguments similar to those presented in Section~\ref{subsec:cvg_faedo-glrk_approximation}. Finally, we follow the same ideas as the calculations \eqref{eq:cvg_fglrk:dist_eta_bound}-\eqref{eq:faedo-glrk:distance_T_constraint} to conclude that 
\[\xdist (\cB_n(t), \partial \O) \geq 2 \sigma \quad \forall \, t \in [0, T]. \]

\subsection{Vanishing dissipation in the continuity equation and the limiting system} \label{subsec:vnsh_dspt_continuity_equation}
Proposition~\ref{prop:appx_sol:existence_n_solution} establishes the existence of weak solutions $(\rho_\varepsilon, u_\varepsilon, \cB_\varepsilon)$ to \eqref{eq:appx_sol:Qe_def}-\eqref{eq:appx_sol:mue_nue_definition}. In this section, we prove Proposition~\ref{prop:appx_sol:existence_delta_level} by passing to the limit $\varepsilon \rightarrow 0$ in the $\varepsilon$-level approximation problem. A key step of the analysis consists in identifying the pressure corresponding to the limiting system. Given initial data $(\rho_{0, \delta}, u_{0, \delta})$ for the $\delta$-level approximation problem, we construct initial data $(\rho_{0, \varepsilon}, u_{0, \varepsilon})$ for the $\varepsilon$-level approximation problem such that 
\begin{equation*}
\rho_{0,  \varepsilon} > 0, \quad \rho_{0, \varepsilon} \in \xWn{1, \infty}(\O), \quad \rho_{0, \varepsilon} \rightarrow \rho_{0, \delta} \; \mbox{in} \; \xLbeta(\O), \quad q_{0, \varepsilon} \rightarrow q_{0, \delta} \; \mbox{in} \; \xLn{\frac{2\beta}{\beta + 1}}(\O), 
\end{equation*} 
and 
\begin{equation*}
E_\delta(\rho_{0, \varepsilon}, q_{0, \varepsilon}, \x1_{\cB_0}) \rightarrow E_\delta(\rho_{0, \delta}, q_{0, \delta}, \x1_{\cB_0}), 
\end{equation*} 
where the definition of $E_\delta$ is given in  \eqref{eq:appx_sol:Ed_definition}.\\

The energy estimate \eqref{eq:appx_sol:energye_estimate} yields, up to a subsequence,  
\begin{equation}
u_\varepsilon \rightarrow u_\delta \; \mbox{weakly in} \; \xLtwo(0, T; \xHone(\O)). \label{eq:vnsh_dspt:compacity_results_ue}  
\end{equation}
It then follows from Proposition~\ref{prop:smlrt-prop:weak_sequential_continuity} that up to a subsequence, for all $1 \leq p < \infty$, 
\begin{equation}
\x1_{\cB_\varepsilon} \rightarrow \x1_{\cB_\delta}  \; \mbox{weakly--* in} \; \xLinfty((0, T) \times \O), \; \mbox{strongly in} \; \xCzero([0, T]; \xLn{p}_{\xloc}(\xR^3)),
\label{eq:vnsh_dspt:1Be_cvg} 
\end{equation}
where the pair $(\x1_{\cB_\delta}, u_\delta)$ satisfies
\begin{equation*}
\partial_t \x1_{\cB_\delta} + u_\delta \cdot \nabla \x1_{\cB_\delta} = 0 \; \mbox{in} \; \cD^\prime((0, T) \times \xR^3), \quad \cB_\delta(0) = \cB_0.
\end{equation*}
Moreover, the sequences $\{u_\varepsilon\}$ and $\{\eta_\varepsilon\}$ satisfy the assumptions of Corollary~\ref{coro:smlrt_prop:stability_compatible_velocities}. It then follows that    
\begin{equation}
\eta_\varepsilon \rightarrow \eta_\delta \; \mbox{weakly in} \; \xHone([0, T]; \cM), \; \mbox{strongly in} \; \xCzero([0, T]; \cM), 
\label{eq:vnsh_dspt:etae_cvg}  
\end{equation}
and, if we define $$\cB_\delta(t) = \eta_\delta[t](\cB_0) \quad \forall \, t \in [0, T],$$ then the velocity field $u_\delta$ is compatible with the system $\{ \overline {\cB_\delta}, \eta_\delta \}$. 

Let $x_\delta, R_\delta \in \xHone([0, T])$ such that $\cB_\delta = \cB(x_\delta, R_\delta)$, we also have $$ R_\delta(t) \geq \frac{R_0}{2} \quad \forall \, t \in [0, T].$$ From the convergence of $\eta_\varepsilon$ given in \eqref{eq:vnsh_dspt:etae_cvg}, it follows that, for $\varepsilon > 0$ sufficiently small, 
\begin{equation*}
\cB_\delta(t) \subset \cB_\varepsilon^\frac{\sigma}{2}(t) \quad \forall \, t \in [0, T]. 
\end{equation*} 
In particular, since $\cB_\varepsilon$ satisfies $\xdist(\cB_\varepsilon, \partial \O) \geq 2 \sigma$ in $[0, T]$, we deduce that 
\[ \xdist(\cB_\delta(t), \partial \O) \geq 3 \sigma/2  \quad \forall \, t \in [0, T]. \] 

Following the same arguments as in \cite[Section 3.3]{Feireisl2001JMF}, \cite[Section 7.9.1]{Novotny2004OUP}, we get  
\begin{align}
&\rho_\varepsilon \rightarrow \rho_\delta \; \mbox{in} \; \xCzero([0, T];  \xLn{\beta}_{\xweak} (\O)), \label{eq:vnshg_dspt:rhoe_cvg} \\ 
&\rho_\varepsilon u_\varepsilon \rightarrow \rho_\delta u_\delta \; \mbox{weakly--* in} \; \xLinfty(0, T; \xLn{\frac{2\beta}{\beta + 1}}(\O)),  \label{eq:vnshg_dspt:rhoeue_cvg} \\ 
& \varepsilon \nabla \rho_\delta \rightarrow 0 \; \mbox{strongly in} \; \xLtwo((0, T) \times \O).\label{eq:vnsh_dspt:compacity_results_nrhoe}
\end{align}
These convergences allow us to pass to the limit in the continuity equation \eqref{eq:appx_sol:rhoe_continuity_equation}. In addition, the couple $(\rho_\delta, u_\delta)$ satisfies the renormalized continuity equation \eqref{eq:intro:rho_renormalized_continuity_equation} for all $b$ satisfying \eqref{eq:intro:b_cdt_renormalized_solution}.  Finally, by Proposition~\ref{lem:smlrt-prop:density_bubble}, the density $\rho_\delta$ is compatible with the system $\{\overline{\cB_\delta}, \eta_\delta\}$. \\

We now address the convergence of the momentum equation \eqref{eq:appx_sol:rhoeue_momentum_equation}. Let $\varphi \in {\cT} (Q_\delta)$. Since $\overline {Q_\delta}$ is a compact subset of $(0, T)\times \O$, an argument analogous to that of Section~\ref{subsec:cvg_high_penalization_term_momentum_equation} yields the existence of $\sigma_0 > 0$ and $\varphi_b \in \xCzero([0, T]; \cS)$ such that, for all $ t \in [0, T]$, 
\begin{equation*}
\varphi(t, \cdot) = \varphi_b(t, \cdot) \; \mbox{in} \; \cB_\delta^{\sigma_0}(t)
\end{equation*} 
where, for all $\sigma > 0$, $\cB_\delta^\sigma = \cB(x_\delta, R_\delta + \sigma)$. In particular, for $0 < \sigma \leq \sigma_0$, the following identity holds:
\begin{equation}
\varphi = (1 - \x1_{\cB_\delta^\sigma})\varphi + \x1_{\cB_\delta^\sigma} \varphi_b.
\label{eq:vnsh_dspt:varphi_decomposition}
\end{equation}
Using once more the strong convergence of $\eta_\varepsilon$ given in \eqref{eq:vnsh_dspt:etae_cvg}, we deduce that there exists $\varepsilon_0 > 0$ such that for $0 \leq \varepsilon \leq \varepsilon_0$, 
\begin{equation*}
\cB_\varepsilon(t) \subset \cB_\delta^{\sigma_0}(t) \quad \forall \, t \in [0, T], 
\end{equation*} 
and consequently 
\begin{equation*}
\x1_{\cB_\varepsilon} \x1_{\cB_\delta^{\sigma_0}} = \x1_{\cB_\varepsilon}, \quad (1 - \x1_{\cB_\varepsilon}) (1 - \x1_{\cB_\delta^{\sigma_0}}) = (1 - \x1_{\cB_\delta^{\sigma_0}}), \quad \x1_{\cB_\varepsilon}  (1 - \x1_{\cB_\delta^{\sigma_0}}) = 0.
\end{equation*} 
We now look at the convergence of 
\begin{equation}
\int_0^T \int_\O \rho_\varepsilon u_\varepsilon \otimes u_\varepsilon : \xD(\varphi) \xdif y \xdif \tau.
\label{eq:vnsh_dspt:conv_term_f}
\end{equation}
When details are omitted, the arguments for the convergence of \eqref{eq:vnsh_dspt:conv_term_f} are analogous to those presented in Section~\ref{subsec:cvg_high_penalization_term_momentum_equation}. For $0 < \sigma \leq \sigma_0$, we write 
$$\int_0^T \int_\O \rho_\varepsilon u_\varepsilon \otimes u_\varepsilon : \xD(\varphi) \xdif y \xdif \tau -\int_0^T \int_\O \rho_\delta u_\delta \otimes u_\delta : \xD(\varphi) \xdif y \xdif \tau = \sum_{i=1}^4 M_i(\sigma, \varepsilon)$$ where 
\begin{equation*}
\begin{aligned}
& M_1(\sigma, \varepsilon) =  \int_0^T \int_\O (1 - \x1_{\cB_\delta^\sigma}) \rho_\varepsilon u_\varepsilon \otimes u_\varepsilon : \xD (\varphi) \xdif y \xdif \tau  - \int_0^T \int_\O (1 - \x1_{\cB_\delta^\sigma}) \rho_\delta u_\delta \otimes u_\delta : \xD (\varphi) \xdif y \xdif \tau, \\ 
& M_2(\sigma, \varepsilon) = \int_0^T \int_\O (\x1_{\cB_\delta^\sigma} - \x1_{\cB_\delta}) \rho_\varepsilon u_\varepsilon \otimes u_\varepsilon : \xD (\varphi)  \xdif y \xdif \tau + \int_0^T \int_\O (\x1_{\cB_\delta^\sigma} - \x1_{\cB_\delta}) \rho_\delta u_\delta \otimes u_\delta : \xD (\varphi)  \xdif y \xdif \tau, \\
& M_3(\sigma, \varepsilon) = \int_0^T \int_\O (\x1_{\cB_\delta} - \x1_{\cB_\varepsilon}) \rho_\varepsilon u_\varepsilon \otimes u_\varepsilon : \xD (\varphi) \xdif y \xdif \tau, \\ 
& M_4(\sigma, \varepsilon) = \int_0^T \int_\O \x1_{\cB_\varepsilon} \rho_\varepsilon u_\varepsilon \otimes  u_\varepsilon : \xD (\varphi)  \xdif y \xdif \tau - \int_0^T \int_\O \x1_{\cB_\delta} \rho_\delta u_\delta \otimes u_\delta : \xD (\varphi) \xdif y \xdif \tau.
\end{aligned} 
\end{equation*}
Let $\theta > 0$, there exists $\sigma_1, \varepsilon_1 >$ such that for $0 < \sigma \leq \min\{\sigma_0, \sigma_1\}$ and $0 < \varepsilon \leq \min\{\varepsilon_0, \varepsilon_1\}$, 
$$ |M_2(\sigma, \varepsilon)| \leq \theta/4, \quad |M_3(\sigma, \varepsilon)| \leq \theta/4. $$ 
Arguing as in Lemma~\ref{lem:high_pen:VoL_convergence}, we get for $i\in \{1,2,3\}$:
\begin{align}
&(\rho_\varepsilon u_\varepsilon, \x1_{\cB_\varepsilon} e_i)_\O \xrightarrow[]{\varepsilon \rightarrow 0} (\rho_\delta u_\delta , \x1_{\cB_\delta} e_i)_\O \; \mbox{in} \; \xCzero([0, T]), \label{eq:cvg_vnsg_dspt:V_convergence} 
\\  
& (\rho_\varepsilon u_\varepsilon, \x1_{\cB_\varepsilon}  e_i \times (x - x_\varepsilon))_\O \xrightarrow[]{\varepsilon \rightarrow 0} (\rho_\delta u_\delta,\x1_{\cB_\delta}  e_i \times  (x - x_\delta))_\O \; \mbox {in} \; \xCzero([0, T]),\label{eq:cvg_vnsg_dspt:o_convergence} \\
& (\rho_\varepsilon u_\varepsilon,  \x1_{\cB_\varepsilon} (x - x_\varepsilon))_\O \xrightarrow[]{\varepsilon \rightarrow 0} \left(\rho_\delta u_\delta,  \x1_{\cB_\delta} (x - x_\delta) \right)_\O \; \mbox{in} \; \xCzero([0, T]).  \label{eq:cvg_vnsg_dspt:L_convergence}
\end{align}
Using the convergence \eqref{eq:cvg_vnsg_dspt:V_convergence}-\eqref{eq:cvg_vnsg_dspt:L_convergence}, we obtain  
\begin{equation*}
\lim \limits_{\varepsilon \rightarrow 0} \int_0^T \int_\O \xdiv (\varphi_b) \x1_{\cB_\varepsilon}  \rho_\varepsilon |u_\varepsilon|^2 \xdif y \xdif \tau = \int_0^T \int_\O \xdiv (\varphi_b) \x1_{\cB_\delta} \rho_\delta |u_\delta|^2 \xdif y \xdif \tau.
\end{equation*} 
Consequently, there exists $\varepsilon_2 > 0$ such that for all $0 < \varepsilon \leq \varepsilon_2$,
\begin{equation*}
|M_4(\sigma, \varepsilon)|\leq \theta/4
\end{equation*}
Finally, setting $\sigma_m = \min \{\sigma_0, \sigma_1\}$ and following similar arguments to the ones presented in \cite[Section 8]{Feireisl2003ARM} and already presented in Section~\ref{subsec:cvg_high_penalization_term_momentum_equation}, we can prove that 
\begin{equation}
\rho_\varepsilon u_\varepsilon \otimes u_\varepsilon \rightarrow \rho_\delta u_\delta \otimes u_\delta \; \mbox{in} \; \xLtwo(Q_{f, \delta}^{\sigma_m})
\label{eq:vnsh_dspt:rhoueue_kf}
\end{equation} 
where $$ Q_{f, \delta}^{\sigma_m} = (0, T) \backslash \overline {Q_\delta^{\sigma_m}}, \quad \overline {Q_\delta^{\sigma_m}} = \{(t, x) \in (0,T) \times \O \; | \; x \in  \overline {\cB_\delta^{\sigma_m}(t)} \}. $$ Consequently, there exists $\varepsilon_3 > 0$, possibly depending on $\sigma_m$, such that for $0 < \varepsilon \leq \varepsilon_3$, 
\begin{equation*}
|M_5(\sigma_m, \varepsilon)| \leq \theta/4.
\end{equation*} 
Finally, for $0 < \varepsilon \leq \min \{\varepsilon_1, \varepsilon_2, \varepsilon_3\}$, 
\begin{equation} 
\left|\int_0^T \int_\O \rho_\varepsilon u_\varepsilon \otimes u_\varepsilon : \xD(\varphi) \xdif y \xdif \tau - \int_0^T \int_\O \rho_\delta u_\delta \otimes u_\delta : \xD (\varphi) \xdif y \xdif \tau \right| \leq \theta,  
\label{eq:vnsh_dspt:rhoueue_rhodudud_theta}
\end{equation} 
which leads to
\begin{equation} 
\int_0^T \int_\O \rho_\varepsilon u_\varepsilon \otimes u_\varepsilon : \xD(\varphi) \xdif y \xdif \tau \xrightarrow[]{} \int_0^T \int_\O \rho_\delta u_\delta \otimes u_\delta : \xD (\varphi) \xdif y \xdif \tau. 
\label{eq:vnsh_dspt:cvg_rhoeueue}
\end{equation} \\ 

We now aim to identify the pressure term. First, we first establish that $\{\rho_\varepsilon\}$ is locally uniformly bounded in $\xLn{\beta + 1}(Q_{f, \delta})$, 
where
\begin{equation*}
Q_{f, \delta} = (0, T) \times\Omega \setminus \overline {Q_\delta}
\end{equation*}
with $\overline {Q_\delta}$ defined by \eqref{eq:appx_sol:Qd_definition}. 
\begin{lmm}
For any compact $\cK_f \subset Q_{f, \delta}$, there exists a constant $c$ independent of $\varepsilon$ such that 
\begin{equation}
\|\rho_\varepsilon\|_{\xLn{\beta + 1}(\cK_f)} \leq c (\cK_f, E_\delta(\rho_{0, \varepsilon}, q_{0, \varepsilon}, \x1_{\cB_0}) , \|g\|_{\xLinfty((0, T)  \times \xR^3)}, \kappa_b).  
\label{eq:vnsh_dspt:pressure_estimate}
\end{equation}
\label{lemma:vnsh_dspt:pressure_estimate}
\end{lmm}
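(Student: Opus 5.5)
This is a local improved pressure integrability estimate of Lions--Feireisl type, localized away from the limiting bubble. I would test the $\varepsilon$-level momentum equation \eqref{eq:appx_sol:rhoeue_momentum_equation} with
\[ \varphi(t,x) = \psi(t)\zeta(x)\,\cB_\Omega\big[\zeta\rho_\varepsilon - \langle\zeta\rho_\varepsilon\rangle\big], \]
where $\cB_\Omega$ is the Bogovskii operator and $\psi\in\cD(0,T)$, $\zeta\in\cD(\Omega)$ are cut-offs. Since the support of the test function must avoid the bubble, I would not take $\zeta$ independent of time. By compactness, $\cK_f$ is covered by finitely many cylinders $(t_k-h,t_k+h)\times U_k$ with $\overline{U_k}\cap \overline{\cB_\delta^{\sigma'}(t)}=\emptyset$ for $|t-t_k|<h$ and some $\sigma'>0$. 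This uses the continuity of $\eta_\delta$ in $\xCzero([0,T];\cM)$. By the strong convergence \eqref{eq:vnsh_dspt:etae_cvg}, for $\varepsilon$ small we also have $\cB_\varepsilon(t)\cap U_k=\emptyset$ on these time intervals. I would then apply the Bogovskii operator on a smooth domain $U_k'$ with $U_k\Subset U_k'$, still disjoint from the bubbles. The resulting $\varphi$ is compactly supported in the fluid region, so it belongs to $\cT(Q_\varepsilon)$ with $\varphi_b=0$. Moreover, in its support $\chi=0$, $\mu_\varepsilon=\mu_f$, $\nu_\varepsilon=\nu_f$, the surface tension term vanishes, and the pressure is $a_f\rho^{\gamma_f}+\delta\rho^\beta$.

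Since $\xdiv\varphi=\psi\zeta^2\rho_\varepsilon-\psi\zeta\langle\zeta\rho_\varepsilon\rangle+\psi\nabla\zeta\cdot\cB[\ldots]$, the pressure term produces
\[ \int\!\!\int \psi\zeta^2(a_f\rho_\varepsilon^{\gamma_f+1}+\delta\rho_\varepsilon^{\beta+1}), \]
which controls $\|\rho_\varepsilon\|^{\beta+1}_{\xLn{\beta+1}}$ on $\{\psi\zeta^2=1\}$. All remaining terms must be bounded uniformly in $\varepsilon$ using the energy estimate \eqref{eq:appx_sol:energye_estimate}. That estimate gives $\rho_\varepsilon\in\xLinfty(0,T;\xLbeta)$, $u_\varepsilon\in\xLtwo(0,T;\xHone)$, $\rho_\varepsilon|u_\varepsilon|^2\in\xLinfty(0,T;\xLone)$ and $\sqrt\varepsilon\nabla\rho_\varepsilon\in\xLtwo$. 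The right-hand side of that inequality itself needs a Gronwall/Young closing step, as in the Galerkin level. The standard estimates $\|\cB[f]\|_{\xWn{1,p}}\lesssim\|f\|_{\xLn p}$ then handle the viscous, convective, gravity and pressure-mean terms. The convective term uses $\rho u\otimes u\in\xLtwo(\xLn{6\beta/(4\beta+3)})$ and $\nabla\cB[\zeta\rho]\in\xLinfty(\xLbeta)$. The $\varepsilon\nabla u\nabla\rho$ term is handled as in \cite[Section 7.9]{Novotny2004OUP}.

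The main obstacle is the time-derivative term $\int\rho_\varepsilon u_\varepsilon\cdot\partial_t\varphi$. Here
\[ \partial_t\cB[\zeta\rho_\varepsilon]=\cB[\zeta\partial_t\rho_\varepsilon]=-\cB[\zeta\,\xdiv(\rho_\varepsilon u_\varepsilon)]+\varepsilon\cB[\zeta\Delta\rho_\varepsilon], \]
so I would write $\zeta\,\xdiv(\rho u)=\xdiv(\zeta\rho u)-\rho u\cdot\nabla\zeta$ and use the bound of $\cB\circ\xdiv$ in $\xLn p$. This yields the bound $\|\rho_\varepsilon u_\varepsilon\|^2_{\xLtwo(\xLn{6\beta/(\beta+6)})}$, and the $\varepsilon$ part is controlled through $\sqrt\varepsilon\nabla\rho_\varepsilon$. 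If the $(t,x)$ localization makes time derivatives of the cut-off appear, which it does if $\zeta$ depends on $t$, the resulting extra terms involve only $\rho_\varepsilon$ times smooth functions and are harmless. I would keep $\zeta$ time-independent on each small cylinder to avoid this. Summing over the finite cover gives \eqref{eq:vnsh_dspt:pressure_estimate}, with a constant depending on the cover, hence on $\cK_f$, and on the data listed.
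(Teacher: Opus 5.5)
Your proposal is correct and is essentially the paper's argument. The paper, following Feireisl's Lemma 8.1 for rigid bodies, tests the $\varepsilon$-level momentum equation with the Bogovskii-type function $\psi(t)\,\cB\big(\rho_\varepsilon - |\cK_f|^{-1}\int_{\cK_f}\rho_\varepsilon\big)$, supported in the fluid region, and bounds the remaining terms by the energy estimate. Your spatial cut-off $\zeta$, the finite cover of $\cK_f$ by cylinders avoiding $\cB_\delta^{\sigma'}(t)$ (hence $\cB_\varepsilon(t)$ for small $\varepsilon$), and your treatment of $\partial_t\varphi$ through the regularized continuity equation simply make explicit the localization that the paper's phrase ``Bogovskii operator on $\cK_f$'' leaves implicit.
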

\begin{proof}
The proof follows the same strategy as in \cite[Lemma 8.1]{Feireisl2003ARM}, \cite[Lemmma 3.1]{Feireisl2001JMF}. It relies on testing the momentum equation \eqref{eq:appx_sol:rhoeue_momentum_equation} with functions of the form $$ \varphi = \psi(t)\, \cB \left(\rho_\varepsilon - |\cK_f|^{-1} \int_{\cK_f} \rho_\varepsilon \mathrm{d} y \right),$$
where $\psi\in \cD(0,T)$ and $\cB$ denotes the Bogovskiĭ operator on $\cK_f$.
\end{proof}
Applying Lemma~\ref{lemma:vnsh_dspt:pressure_estimate}, we deduce that 
\begin{align}
& \rho_\varepsilon^\beta \rightarrow \overline {\rho_\delta^\beta} \; \mbox{weakly in} \; \xLn{\frac{\beta + 1}{\beta}}(\cK_f), 
\label{eq:vnsh_dspt:compacity_results_rhoeb} \\
& \rho_\varepsilon^{\gamma_f} \rightarrow \overline {\rho_\delta^{\gamma_f}} \; \mbox{weakly in} \; \xLn{\frac{\gamma_f + 1}{\gamma_f}}(\cK_f), 
\label{eq:vnsh_dspt:compacity_results_rhoeg}
\end{align}
for any compact $\cK_f \subset Q_{f, \delta} $. Together with \eqref{eq:vnsh_dspt:compacity_results_ue},\eqref{eq:vnshg_dspt:rhoe_cvg},   and \eqref{eq:vnsh_dspt:rhoueue_kf}, the convergences  \eqref{eq:vnsh_dspt:compacity_results_rhoeb}-\eqref{eq:vnsh_dspt:compacity_results_rhoeg} allow us to pass to the limit $\varepsilon \rightarrow 0$ in \eqref{eq:appx_sol:rhoeue_momentum_equation}. Consequently, for any test function $\varphi \in {\cD} (Q_{f, \delta})$, we obtain
\begin{multline*}
\int_0^T \int_{\xR^3} (\rho_\delta u_\delta) \partial_t \varphi + (\rho_\delta u_\delta \otimes u_\delta):\xD(\varphi) + (a_f \overline {\rho_\delta^{\gamma_f}} + \delta \overline {\rho_\delta^\beta}) \xdiv (\varphi) \xdif y \xdif \tau \\ 
=  \int_0^T \int_{\xR^3} \xT_f (u_\delta) : \xD (\varphi)  - \rho g \cdot \varphi  \xdif y \xdif \tau.
\end{multline*}
The next step consists in establishing the strong convergence of the density. Specifically, we aim to identify the nonlinear pressure term  $(a_f \overline {\rho_\delta^{\gamma_f}} + \delta \overline {\rho_\delta^\beta})$ by proving that $\overline {\rho_\delta^{\gamma_f}} = \rho_\delta^{\gamma_f}$ and $\overline {\rho_\delta^\beta} = \rho_\delta^\beta$. To this end, we introduce the effective viscous flux $$a_f \rho_\varepsilon^{\gamma_f} + \delta \rho_\varepsilon^\beta  - \left(\frac43 \mu_f + \nu_f\right) \xdiv (u_\varepsilon).$$ which exhibits improved compactness properties compared to the pressure or velocity fields separately.
\begin{lmm} For any test function $\varphi \in \cD(Q_{f, \delta})$, 
\begin{multline*}
\lim \limits_{\varepsilon \rightarrow 0} \int_0^T \int_{\xR^3} \varphi \left(a_f \rho_\varepsilon^{\gamma_f} + \delta \rho_\varepsilon^\beta - \left(\nu_f + \frac43 \mu_f\right) \mathrm {div} \, (u_\varepsilon)\right) \rho_\varepsilon \xdif y \xdif \tau \\ 
= \int_0^T \int_{\xR^3} \varphi \left(a_f \overline {\rho_\delta^{\gamma_f}} + \delta \overline {\rho_\delta^\beta} - \left(\nu_f + \frac43 \mu_f\right) \mathrm {div} (u_\delta)\right) \rho_\delta \xdif y \xdif \tau 
\end{multline*} 
\label{lemma:vnsh_dspt:cvg_effective_viscous_flux}
\end{lmm}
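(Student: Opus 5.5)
The plan is the classical Lions--Feireisl effective viscous flux argument, localized to the fluid region $Q_{f,\delta}$, following \cite[Section 3.3]{Feireisl2001JMF} and \cite[Lemma 8.2]{Feireisl2003ARM}. Fix $\varphi\in\cD(Q_{f,\delta})$. Since $\overline{Q_\delta}$ is compact and $\eta_\varepsilon\to\eta_\delta$ strongly in $\xCzero([0,T];\cM)$ by \eqref{eq:vnsh_dspt:etae_cvg}, there is $\varepsilon_0$ such that $\operatorname{supp}\varphi$ (and a slightly larger compact $\cK_f$) stays away from $\overline{Q_\varepsilon}$ for $\varepsilon\le\varepsilon_0$. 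On this set $\mu_\varepsilon=\mu_f$, $\nu_\varepsilon=\nu_f$ and $p_\delta(\rho_\varepsilon,\x1_{\cB_\varepsilon})=a_f\rho_\varepsilon^{\gamma_f}+\delta\rho_\varepsilon^\beta$. Hence any test function supported in $\cK_f$ belongs to $\cT(Q_\varepsilon)$ for all small $\varepsilon$ and also to $\cT(Q_\delta)$. We may therefore use the $\varepsilon$-momentum equation \eqref{eq:appx_sol:rhoeue_momentum_equation} and the limit equation on $Q_{f,\delta}$ obtained just above with purely fluid coefficients.

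\textbf{Step 1: choice of test functions.} Let $\zeta\in\cD(Q_{f,\delta})$ be a cutoff with $\zeta=1$ on $\operatorname{supp}\varphi$, and let $\cA=\nabla\Delta^{-1}$ be the inverse divergence on $\xR^3$, with symbol $\xi_j/|\xi|^2$. In \eqref{eq:appx_sol:rhoeue_momentum_equation} we test with $\phi_\varepsilon=\varphi\,\cA[\zeta\rho_\varepsilon]$. In the limit equation we test with $\phi=\varphi\,\cA[\zeta\rho_\delta]$; this is admissible by density, since $\rho_\delta$ satisfies the continuity equation in $\cD'$ once extended by zero. Both are supported away from the bubble.

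Using the continuity equation \eqref{eq:appx_sol:rhoe_continuity_equation}, we have $\partial_t\cA[\zeta\rho_\varepsilon]=\cA[\rho_\varepsilon\partial_t\zeta-\zeta\xdiv(\rho_\varepsilon u_\varepsilon)+\varepsilon\zeta\Delta\rho_\varepsilon]$. The $\varepsilon$-terms, namely $\varepsilon\Delta\rho_\varepsilon$ and $\varepsilon\nabla u_\varepsilon\nabla\rho_\varepsilon$, vanish in the limit by \eqref{eq:appx_sol:rhoe_impvd_regularity} and \eqref{eq:vnsh_dspt:compacity_results_nrhoe}. The viscous term splits as follows: $(\frac43\mu_f+\nu_f)\int\varphi\,\xdiv u_\varepsilon\,\zeta\rho_\varepsilon$ plus commutator-type terms involving $\nabla\varphi$, which converge by compactness. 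Here $\cA[\zeta\rho_\varepsilon]\to\cA[\zeta\rho_\delta]$ strongly in $\xCzero([0,T];\xLn{q})$, by \eqref{eq:vnshg_dspt:rhoe_cvg} and compactness of $\cA$ on $\xLn\beta$. The pressure term produces exactly $\int\varphi(a_f\rho_\varepsilon^{\gamma_f}+\delta\rho_\varepsilon^\beta)\rho_\varepsilon$, which is uniformly integrable thanks to Lemma~\ref{lemma:vnsh_dspt:pressure_estimate}. All lower-order terms converge to their limit counterparts.

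\textbf{Step 2: the commutator.} After subtracting the two identities, the only remaining term is
$$\int_0^T\!\!\int\varphi\,u_\varepsilon\cdot\bigl(\rho_\varepsilon\,\mathcal R[\zeta\rho_\varepsilon u_\varepsilon]-\zeta\rho_\varepsilon u_\varepsilon\,\mathcal R[\rho_\varepsilon]\bigr),\qquad \mathcal R_{ij}=\partial_i\partial_j\Delta^{-1},$$
up to cutoff placement. It must converge to the same expression with $(\rho_\delta,u_\delta)$. This is the main obstacle. I will use the div--curl commutator lemma: $\rho_\varepsilon\to\rho_\delta$ in $\xCzero([0,T];\xLn\beta_{\xweak})$ and $\rho_\varepsilon u_\varepsilon\to\rho_\delta u_\delta$ in $\xCzero([0,T];\xLn{2\beta/(\beta+1)}_{\xweak})$, which follows from \eqref{eq:vnshg_dspt:rhoeue_cvg} and the equicontinuity given by the momentum equation localized in $\cK_f$. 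The lemma then gives $\rho_\varepsilon\mathcal R[\zeta\rho_\varepsilon u_\varepsilon]-\zeta\rho_\varepsilon u_\varepsilon\mathcal R[\rho_\varepsilon]\to$ its limit weakly in $\xLn r$ with $\frac1r=\frac1\beta+\frac{\beta+1}{2\beta}<\frac56$, for a.e.\ $t$. This holds because $\beta\ge8$, so that $r>6/5$. Compactness in $\xLtwo(0,T;\xHn{-1})$ then upgrades this to strong convergence in a negative Sobolev space, which can be paired with $u_\varepsilon\rightharpoonup u_\delta$ in $\xLtwo(0,T;\xHone)$.

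Collecting Steps 1--2, the pressure--density product and the divergence--density product appear on both sides with the same coefficient. This yields the claimed identity with $\zeta=1$ on $\operatorname{supp}\varphi$. Note that the bubble plays no role here, because it lies outside the supports for small $\varepsilon$.
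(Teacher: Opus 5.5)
Your proposal is correct and follows essentially the same route as the paper, which gives no proof of its own and instead invokes the classical Lions--Feireisl effective viscous flux argument from its references. You reproduce that scheme: test with $\varphi\,\mathcal{A}[\zeta\rho_\varepsilon]$, which is supported away from $\overline{Q_\varepsilon}$ for small $\varepsilon$, and handle the convective remainder by the commutator lemma together with the localized $\xCzero([0,T];\xLn{2\beta/(\beta+1)}_{\xweak})$ convergence of $\zeta\rho_\varepsilon u_\varepsilon$. One small inaccuracy: Lemma~\ref{lemma:vnsh_dspt:pressure_estimate} only makes $\varphi\,p_\delta(\rho_\varepsilon)\rho_\varepsilon$ bounded in $\xLone$, not uniformly integrable, but this is harmless, since the limit of that term comes from the tested identity in which every other term converges.
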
 
\begin{proof}
The proof of this Lemma can be found in  \cite[Theoreme 5.1, Appendix B]{lions1998O}, \cite[Lemma 3.2]{Feireisl2001JMF},  \cite[Lemma 8.2]{Feireisl2003ARM}, \cite[Lemma 7.50]{Novotny2004OUP}.
\end{proof}
Applying Lemma~\ref{lemma:vnsh_dspt:cvg_effective_viscous_flux} and the monotonicity of the mappings $\rho \rightarrow \rho^{\gamma_f}$, $\rho \rightarrow \rho^{\beta}$, we get 
\begin{equation}
\overline {\rho_\delta \xdiv (u_\delta)} \geq \rho_\delta \xdiv (u_\delta) \; \mbox {on} \; Q_{f, \delta}, 
\label{eq:vnsh_dspt:rhodivu_limits_inequality_Qfd}  
\end{equation} 
where $\overline {\rho_\delta \xdiv (u_\delta)}$ is defined by
\begin{equation*}
\rho_\varepsilon \xdiv (u_\varepsilon) \rightarrow \overline {\rho_\delta \xdiv (u_\delta)} \; \mbox {weakly in} \; \xLone((0, T) \times \O). 
\end{equation*} 
Moreover, on any compact $\cK_b \subset Q_{b, \delta}$, there exists $ \varepsilon_0$, possibly depending on $\cK_b$, such that  for $0 < \varepsilon \leq \varepsilon_0$,  
\begin{equation}
\xdiv (u_\delta) = \Lambda_\delta, \; \xdiv (u_\varepsilon) = \Lambda_\varepsilon \;  \mbox{and} \; \Lambda_\delta \rightarrow \Lambda_\varepsilon \; \mbox{weakly in} \; \xLtwo(0, T). 
\label{eq:vnsh_dspt:divue_cvg_kb} 
\end{equation}
From the convergence of $\rho_\varepsilon$ given in \eqref{eq:vnshg_dspt:rhoe_cvg} and of $\xdiv (u_\varepsilon)$ given in  \eqref{eq:vnsh_dspt:divue_cvg_kb}, we deduce 
\begin{equation*}
\overline {\rho_\delta \xdiv (u_\delta)} = \rho_\delta \xdiv (u_\delta) \; \mbox {in} \;  Q_\delta.
\end{equation*}
Finally, we obtain
\begin{equation}
\overline {\rho_\delta \xdiv (u_\delta)} \geq \rho_\delta \xdiv (u_\delta) \; \mbox {on} \;  (0, T) \times \xR^3.
\label{eq:vnsh_dspt:rhodivu_limits_inequality} 
\end{equation} 
The remaining arguments follow standard results from the theory of compressible fluids and can be found in \cite{lions1998O}, \cite{Feireisl2001JMF}[Section 3.5], \cite{Novotny2004OUP}[Lemma 7.51], \cite{Feireisl2003ARM}[Section 8]. For completnesss, we outline the main steps. Noting that $b(s) = s \log(s)$ satisfies the condition \eqref{eq:intro:b_cdt_renormalized_solution} and $\rho_\delta$ satisfies the renormalized continuity equation \eqref{eq:intro:rho_renormalized_continuity_equation}, we deduce, for all $t \in [0, T]$,
\begin{equation}
\int_0^t \int_\O \rho_\delta \xdiv (u_\delta) \xdif y \xdif \tau = \int_\O \rho_{0, \delta} \log (\rho_{0, \delta}) \xdif y - \int_\O \rho_\delta(t) \log(\rho_\delta(t)) \xdif y.
\label{eq:vnsh_dspt:integration_renormalized_continuity_equation}
\end{equation} 
Since $\rho_\varepsilon$ satisfies the regularized continuity equation \eqref{eq:appx_sol:rhoe_continuity_equation} almost everywhere on $(0, T) \times \O$, it follows that,  for any convex $b$, globally Lipschitz on $[0, \infty)$,   
\begin{equation*}
\partial_t b(\rho_\varepsilon) + \xdiv (b(\rho_\varepsilon) u_\varepsilon) + (b^\prime (\rho_\varepsilon) \rho_\varepsilon - b(\rho_\varepsilon)) \xdiv (u_\varepsilon)- \varepsilon \Delta b (\rho_\varepsilon)\leq 0.
\end{equation*} 
In particular taking again $b(s) = s \log(s)$ and integrating on $(0, t) \times \O$, we obtain
\begin{equation}
\int_0^t \int_\O \rho_\varepsilon \xdiv (u_\varepsilon) \xdif y \xdif \tau \leq \int_\O \rho_{0, \varepsilon} \log(\rho_{0, \varepsilon}) \xdif y - \int_\O \rho_\varepsilon(t) \log (\rho_\varepsilon(t)) \xdif y. 
\label{eq:vnsh_dspt:integration_regularized_continuity_equation}
\end{equation} 
The relations \eqref{eq:vnsh_dspt:rhodivu_limits_inequality}, \eqref{eq:vnsh_dspt:integration_renormalized_continuity_equation} together with \eqref{eq:vnsh_dspt:integration_regularized_continuity_equation} lead to 
\begin{equation*}
\limsup_{\varepsilon \rightarrow 0} \int_\O \rho_\delta (t) \log (\rho_\delta(t)) \xdif y \leq \int_\O \rho_\delta(t) \log (\rho_\delta(t)) \xdif y \; \forall \, t \in [0, T]
\end{equation*} 
which implies, for all $1 \leq p < \beta + 1$,  
\begin{equation}
\rho_\varepsilon \rightarrow \rho_\delta \; \mbox {in} \; \xLn{p}((0, T) \times \O).
\label{eq:vnsh_dspt:rhoe_stg_cvg}
\end{equation} 
Consequently, $\overline {\rho_\delta^{\gamma_f}} + \delta \overline {\rho_\delta^\beta} = \rho_\delta^{\gamma_f} + \delta \rho_\delta^\beta$ on $(0, T) \times \O.$  \\ 

Finally, using the following convergences  
\begin{align*}
& \rho_\varepsilon u_\varepsilon \otimes u_\varepsilon \rightarrow \rho_\delta u_\delta \otimes u_\delta \; \mbox{weakly in} \; \xLtwo(0, T; \xLn{\frac{6\beta}{4\beta + 3}}(\O)),  \\ 
& \rho_\varepsilon \rightarrow \rho_\delta \; \mbox{strongly in} \; \xLn{p}((0, T) \times \O) \quad \forall \, 1 \leq p < \beta + 1, 
\end{align*} 
we have 
\begin{equation*}
\begin{aligned}
& \int_\O \rho_\varepsilon |u_\varepsilon|^2 \xdif y  \rightarrow \int_\O \rho_\delta |u_\delta|^2 \xdif y \; \mbox{weakly in} \;  \xLtwo(0, T), \\ 
& \int_\O P_\delta(\rho_\varepsilon, \x1_{\cB_\varepsilon}) \xdif y \rightarrow \int_\O P_\delta(\rho_\delta, \x1_{\cB_\delta}) \xdif y \; \mbox{weakly in} \; \xLn{\frac{\beta + 1}{\beta}}(0, T).
\end{aligned}
\end{equation*}
and 
\begin{equation*}
\displaystyle \int_\O \left(\rho_\varepsilon \frac{|u_\varepsilon|^2}{2} + P_\delta(\rho_\varepsilon, \x1_{\cB_\varepsilon}) \right) \xdif y \xrightarrow[]{\varepsilon \rightarrow 0} \int_\O \left(\rho_\delta \frac{|u_\delta|^2}{2} + P(\rho_\delta, \x1_{\cB_\delta}) \right) \xdif y. 
\end{equation*}
Due to the weak lower semicontinuity of the $\xLtwo$ norms, the convergence of $u_\varepsilon$ given in \eqref{eq:vnsh_dspt:compacity_results_ue} , the strong convergence of $\rho_\varepsilon$ given in \eqref{eq:vnsh_dspt:rhoe_stg_cvg}, the strong convergence of $\chi_\varepsilon$ given in \eqref{eq:vnsh_dspt:1Be_cvg}, we can pass to the limit as $\varepsilon \rightarrow 0$ in the other terms of the energy inequality \eqref{eq:appx_sol:energye_estimate} to establish the estimate \eqref{eq:appx_sol:energyd_estimate}. 

\section{Proof of the main result} \label{sec:proof_main_result}
Proposition~\ref{prop:appx_sol:existence_delta_level} establishes the existence of weak solutions $(\cB_\delta , \rho_\delta, u_\delta)$ to system \eqref{eq:appx_sol:pd_definition}-\eqref{eq:appx_sol:mud_nud_definition}. In this section, we analyse the convergence of these solutions as $\delta \rightarrow 0$ in order to prove Theorem~\ref{theo:intro:main_existence_result}. We consider initial data $\rho_0$, $q_0$ satisfying \eqref{eq:intro:rho0_regularity}-\eqref{eq:intro:u0_compatibility}. Following the same ideas as in \cite[Section 4.0]{Feireisl2001JMF}, \cite[Section 7.10.7]{Novotny2004OUP}, we construct  $\{ \rho_{0, \delta} \} \in \xLn{\beta}(\O)$ and $\{q_{0, \delta}\} \in \xLn{\frac{2\beta}{\beta + 1}}(\O)$ such that  
\begin{equation*}
\rho_{0, \delta} = \rho_{b, 0} \; \mbox{in} \; \cB_0, \quad \rho_{0, \delta} \rightarrow \rho_0 \; \mbox{in} \; \xLn{\gamma_f}(\O), \quad q_{0, \delta} \rightarrow q_0 \; \mbox{in} \; \xLn{\frac{2\gamma_f}{\gamma_f + 1}}(\O), 
\end{equation*} 
and 
\begin{equation*}
E_\delta (\rho_{0, \delta}, q_{0, \delta}, \x1_{\cB_0}) \rightarrow E(\rho_0, q_0, \x1_{\cB_0}). 
\end{equation*}
The energy estimate \eqref{eq:appx_sol:energyd_estimate} yields, up to a subsequence,
\begin{equation}
u_\delta \rightarrow u \; \mbox{weakly in} \; \xLtwo(0, T; \xHone(\O)). \label{eq:vnsh_dspt:compacity_results_ud}
\end{equation}
By Proposition~\ref{prop:smlrt-prop:weak_sequential_continuity}, up a to a subsequence and for all $1 \leq p < \infty$, 
\begin{equation}
\x1_{\cB_\delta} \rightarrow \x1_{\cB} \; \mbox{weakly--* in} \; \xLinfty((0, T) \times \O), \; \mbox{strongly in} \; \xCzero([0, T]; \xLn{p}_{\xloc}(\xR^3)),
\label{eq:vnsh_artificial_pressure:1Bd_cvg} 
\end{equation}
where the pair $(u, \x1_{\cB})$ satisfies 
\begin{equation}
\partial_t \x1_{\cB} + u\cdot \nabla \x1_{\cB} \; \mbox{in} \; \cD^\prime((0, T) \times \xR^3), \quad \cB(0) = \cB_0.
\end{equation}
Moreover, the sequences $\{u_\delta\}$ and $\{\eta_\delta\}$ satisfy the assumptions of Proposition~\ref{coro:smlrt_prop:stability_compatible_velocities}. It then follows that
\begin{equation}
\eta_\delta \rightarrow \eta \; \mbox{weakly in} \; \xHone([0, T]; \cM), \; \mbox{strongly in} \; \xCzero([0, T]; \cM), 
\label{eq:vnsh_artificial_pressure:etad_cvg}
\end{equation}
and, if we define $\cB$ by 
\begin{equation*}
\cB(t) = \eta[t](\cB_0) \quad \forall \, t \in [0, T], 
\end{equation*} 
then the velocity field is compatible with the system $\{\overline {\cB}, \eta\}$. \\
Let $R_b, x_b \in \xHone(0, T)$ such that $\cB = \cB(x_b, R_b)$, we have $$R_b(t) \geq \frac{R_0}{2} \quad \forall \, t \in [0, T].$$  Moreover, from the strong convergence of $\eta_\delta$ in $\xCzero([0, T]; \cM)$, we deduce that for $\delta > 0$ sufficiently small, 
\begin{equation*}
\cB(t) \subset \cB_\delta^{\frac{\sigma}{2}}(t) \quad \forall\,  t \, \in [0, T]. 
\end{equation*} 
Since $\xdist (\cB_\delta (t), \partial \O) \geq 3\sigma/2$ in $[0, T]$, it follows that  
\begin{equation*}
\xdist(\cB(t), \partial \O) \geq \sigma \quad \forall \, t \in [0, T]. 
\end{equation*}

The energy estimate \eqref{eq:appx_sol:energye_estimate} also yields uniform bounds for $\x1_{\cB_\delta} \rho_\delta$ in $\xLinfty(0, T; \xLn{\gamma_f}(\O))$ and $(1-\x1_{\cB_\delta}) \rho_\delta$ in $\xLinfty(0, T; \xLn{\gamma_b}(\O))$. Since $\rho_\delta$ is constant in $\cB_\delta$, the uniform bound in $\xLinfty(0, T; \xLn{\gamma_b}(\O))$ implies a uniform bound in $\xLinfty((0, T) \times \O)$ and in particular in $\xLinfty(0, T; \xLn{\gamma_f}(\O))$. We deduce that, up to a subsequence, 
\begin{align}
\rho_\delta \rightarrow  \rho  \; \mbox{weakly--* in} \; \xLinfty(0, T; \xLn{\gamma_f}(\O)), \label{eq:vnsh_dspt:compacity_results_rhod_f}.  
\end{align}
Following the same arguments as in \cite[Section 4.2]{Feireisl2001JMF}, \cite[Section 7.10.1]{Novotny2004OUP}, we also deduce from \eqref{eq:appx_sol:energyd_estimate}, 
\begin{equation*}
\rho_\delta \rightarrow \rho \; \mbox {in} \; \xCzero([0, T]; L_{\xweak}^{\gamma_f}(\O)), \quad \rho_\delta u_\delta \rightarrow \rho u \; \mbox {weakly--* in} \;  \xLinfty(0, T; \xLn{\frac{2 \gamma_f}{\gamma_f + 1}}(\O)),
\end{equation*}
and $(\rho, u)$ solve the continuity equation in $\cD^\prime((0, T) \times \xR^3)$. \\ 

Similarly to the previous sections, we now focus on the convergence of the momentum equation \eqref{eq:appx_sol:rhodud_momentum_equation}. Let $\varphi \in {\cT} (Q_b)$. As in Section~\ref{subsec:cvg_high_penalization_term_momentum_equation} and Section~\ref{subsec:vnsh_dspt_continuity_equation}, since $\overline {Q_b}$ is a compact subset of $(0, T) \times \O$, there exists $\sigma_0 > 0$ and $\varphi_b \in \xCzero([0, T]; \cS)$ such that, for all $ t \in [0, T]$, 
\begin{equation*}
\varphi(t, \cdot) = \varphi_b(t, \cdot) \; \mbox{in} \; \cB^{\sigma_0}(t)
\end{equation*} 
where for all $\sigma > 0$,  $\cB^\sigma = \cB(x_b, R_b + \sigma)$. In particular, for $0 < \sigma \leq \sigma_0$, we have the decomposition:
\begin{equation}
\varphi = (1 - \x1_{\cB^\sigma}) \varphi + \x1_{\cB^\sigma} \varphi_b.
\label{eq:vnsh_artificial_pressure:varphi_decomposition}
\end{equation}
Using once more the strong convergence of $\eta_\delta$ given in \eqref{eq:vnsh_artificial_pressure:etad_cvg},  we deduce that there exists $\delta_0 > 0$ such that for all $0 < \delta \leq \delta_0$, 
\begin{equation*}
\cB_\delta(t) \subset \cB^{\sigma_0}(t) \quad  \forall \, t \in [0, T],  
\end{equation*}
and
\begin{equation*}
\x1_{\cB_\delta} \x1_{\cB^{\sigma_0}} = \x1_{\cB_\delta}, \quad (1 - \x1_{\cB_\delta}) (1 - \x1_{\cB^{\sigma_0}}) = (1 - \x1_{\cB^{\sigma_0}}), \quad \x1_{\cB_\delta}  (1 - \x1_{\cB^{\sigma_0}}) = 0.
\end{equation*}
Similarly to the previous sections, we first focus on the convergence of the nonlinear convective term
\begin{equation}
\int_0^T \int_\O \rho_\delta u_\delta \otimes u_\delta \xdif y \xdif \tau.
\label{eq:vnsh_artificial_pressure:cvct_cvg}
\end{equation}
When details are omitted, the arguments for the convergence of \eqref{eq:vnsh_artificial_pressure:cvct_cvg} are analogous to those presented in Section~\ref{subsec:cvg_high_penalization_term_momentum_equation}. For $0 < \sigma \leq \sigma_0$, we write
\begin{equation*}
\int_0^T \int_\O \rho_\delta u_\delta \otimes u_\delta \xdif y \xdif \tau - \int_0^T \int_\O \rho u \otimes u \xdif y \xdif \tau = \sum_{i = 1}^4 M_i(\sigma, \delta)
\end{equation*} 
where
\begin{align*}
& M_1(\sigma, \delta) =  \int_0^T \int_\O (1 - \x1_{\cB^\sigma}) \rho_\delta u_\delta \otimes u_\delta : \xD (\varphi) \xdif y \xdif \tau  - \int_0^T \int_\O (1 - \x1_{\cB^\sigma}) \rho u \otimes u : \xD (\varphi) \xdif y \xdif \tau, \\ 
& M_2(\sigma, \delta) = \int_0^T \int_\O (\x1_{\cB^\sigma} - \x1_{\cB}) \rho_\delta u_\delta \otimes u_\delta : \xD (\varphi) \xdif y \xdif \tau + \int_0^T \int_\O (\x1_{\cB^\sigma} - \x1_{\cB}) \rho u \otimes u : \xD (\varphi)  \xdif y \xdif \tau, \\
& M_3(\sigma, \delta) = \int_0^T \int_\O (\x1_{\cB} - \x1_{\cB_\delta}) \rho_\delta u_\delta \otimes u_\delta : \xD (\varphi)  \xdif y \xdif \tau, \\ 
& M_4(\sigma, \delta) = \int_0^T \int_\O \x1_{\cB_\delta} \rho_\delta u_\delta \otimes u_\delta : \xD (\varphi)  \xdif y \xdif \tau - \int_0^T \int_\O \x1_{\cB} \rho u \otimes u : \xD (\varphi) \xdif y \xdif \tau.
\end{align*}
Let $\theta > 0$, there exists $\sigma_1, \delta_1 > 0$ such that for $0 < \sigma \leq \min\{\sigma_0, \sigma_1\}$ and $0 < \delta \leq \min\{\delta_0, \delta_1\}$, 
\begin{equation*}
|M_2(\sigma, \delta)|\leq \theta/4, \quad |M_3(\sigma, \delta)|\leq \theta/4, 
\end{equation*} 
For $M_4(\sigma, \delta)$, we can prove once again an analogue of Lemma~\ref{lem:high_pen:VoL_convergence}, that is for $i \in \{1, 2, 3\}$, 
\begin{align}
& (\rho_\delta u_\delta, \x1_{\cB_\delta} e_i)_\O \xrightarrow[]{\delta \rightarrow 0} (\rho u,  \x1_{\cB} e_i)_\O \; \mbox{in} \;\xCzero([0, T]),\label{eq:vnsh_artificial_pressure:strong_cvg_rhodud_displacement}, \\ 
& (\rho_\delta u_\delta, \x1_{\cB_\delta} e_i \times (x - x_\delta))_\O \xrightarrow[]{\delta \rightarrow 0} (\rho u, \x1_{\cB} e_i \times (x - x_b))_\O \; \mbox{in} \; \xCzero([0, T]),  \label{eq:vnsh_artificial_pressure:strong-cvg-rotation}, \\ 
& (\rho_\delta u_\delta, \x1_{\cB_\delta} (x - x_\delta))_\O \xrightarrow[]{\delta \rightarrow 0} (\rho u, \x1_{\cB}  (x - x_b))_\O \; \mbox{in} \;\xCzero([0, T]) \label{eq:vnsh_artificial_pressure:strong_cvg_rhodud_dilatation}. 
\end{align} 
The proof of \eqref{eq:vnsh_artificial_pressure:strong_cvg_rhodud_displacement}-\eqref{eq:vnsh_artificial_pressure:strong_cvg_rhodud_dilatation} follows the same arguments as the proof of Lemma~\ref{lem:high_pen:tdV_tdo_tdL_convergence} with the right-hand side of the inequality \eqref{eq:high_pen:VoL_estimate} replaced with
\begin{multline*}
\int_{t_1}^{t_2} \|\rho_\delta u_\delta \otimes u_\delta\|_{\xLn{\frac{6\gamma_f}{4\gamma_f + 3}}(\O)} \|\nabla \varphi_{\delta, i}^\nu \|_{\xLn{\frac{6 \gamma_f}{2\gamma_f - 3}}(\O)} + \left(\frac43 \mu_f + \nu_f\right) \|\nabla u_\delta \|_{\xLtwo(\O)} \|\nabla \varphi\|_{\xLtwo(\O)} \\ 
+ \nu_b \|\nabla u_\delta \|_{\xLtwo(\O)} \|\nabla \varphi\|_{\xLtwo(\O)} + \|\rho_\delta\|_{\xLn{\gamma_f + \theta}(\O)}^{\gamma_f} \|\nabla \varphi_{\delta, i}^\nu \|_{\xLn{\frac{\gamma_f + \theta}{\theta}}(\O)} + \delta \|\rho_\delta\|_{\xLn{\beta + \theta}(\O)}^\beta \|\nabla \varphi_{\delta, i}^\nu \|_{\xLn{\frac{\beta + \theta}{\theta}(\O)}} \\ 
+ \|\rho_\delta\|_{0, \beta + \theta} \|g\|_{\xLinfty(\O)} \|\varphi_{\delta, i}^\nu \|_{L^\frac{\beta + \theta}{\theta}(\O)} + \frac{2\kappa_b}{R_{b, 0}} \|\nabla \varphi_{\delta, i}^\nu \|_{\xLone(\O)} \xdif \tau
\end{multline*} 
From the convergences \eqref{eq:vnsh_artificial_pressure:strong_cvg_rhodud_displacement}-\eqref{eq:vnsh_artificial_pressure:strong_cvg_rhodud_dilatation}, we obtain
\begin{equation*}
\lim_{\delta \rightarrow 0} \int_0^T \int_\O \xdiv (\varphi_b) \x1_{\cB_\delta} \rho_\delta |u_\delta|^2 \xdif y \xdif \tau \rightarrow \int_0^T \int_\O \xdiv (\varphi_b) \x1_{\cB} \rho |u|^2 \xdif y \xdif \tau, 
\end{equation*}
and consequently, there exists $\delta_2 > 0$ such that for all $0 < \delta \leq \delta_2$, 
\begin{equation*}
|M_4(\sigma, \delta)| \leq \theta/4. 
\end{equation*}
Finally, setting $\sigma_m = \{\sigma_0, \sigma_1\}$ and following similar arguments to the one presented in \cite[Section 8]{Feireisl2003ARM} and already presented in Section~\ref{subsec:cvg_high_penalization_term_momentum_equation}-Section~\ref{subsec:vnsh_dspt_continuity_equation}, we get
\begin{equation*}
\rho_\delta u_\delta \otimes u_\delta \rightarrow \rho u \otimes u \; \mbox{in} \; \xLtwo(Q_f^{\sigma_m})
\end{equation*}
where 
\begin{equation*}
Q_f^{\sigma_m} = (0, T) \times \O \setminus \overline {Q_b^{\sigma_m}}, \quad \overline {Q_b^{\sigma_m}} = \{(t, x) \in (0, T) \times \O \, | \, x \in \overline {\cB^{\sigma_m}(t)} \}.  
\end{equation*} 

We now aim to identify the pressure term. The local pressure estimates obtained in Lemma~\ref{lemma:vnsh_dspt:pressure_estimate} can be replaced by 
\begin{equation}
\|\rho_\delta\|_{L^{\gamma_f + \theta}(\cK_f)}^{\gamma_f + \theta} + \delta \|\rho_\delta\|_{L^{\beta + \theta}(\cK_f)}^{\beta + \theta} \leq c \left(\cK_f, E_{\delta}(\rho_{0, \delta}, q_{0, \delta}, \x1_{\cB_0}), \|g\|_{\xLinfty((0, T) \times \O)} \right)
\label{eq:vnsh_artificial_pressure:pressure-a-priori-estimates}
\end{equation}
for any compact $\cK_f \subset Q_f$, where $\theta > 0$ is a positive constant independent of $\delta$. The proof of \eqref{eq:vnsh_artificial_pressure:pressure-a-priori-estimates} follows the same arguments as in \cite[Equation (9.5)]{Feireisl2003ARM}. In particular, we can apply \cite[Proposition 2.3]{Feireisl2002MMA}, \cite[Lemma 4.1]{Feireisl2001JMF}, \cite[Lemma 7.52]{Novotny2004OUP} to obtain 
\begin{align*}
a_f \rho_\delta^{\gamma_f}  \rightarrow a_f \overline {\rho^{\gamma_f}} \; \mbox {weakly in} \; L^\frac{\gamma_f + \theta}{\gamma_f} (\cK_f) \; \mbox{for any compact} \; \cK_f \subset Q_f, 
\end{align*}
where the limit functions satisfy the identity 
\begin{equation*}
\int_0^T \int_{\xR^3} (\rho u) \cdot \partial_\tau \varphi + \left(\rho u \otimes u\right): \xD (\varphi) + a_f \overline {\rho^{\gamma_f}} \xdiv (\varphi) \xdif y \xdif \tau = \int_0^T \int_{\xR^3} \xT_f (u) : \xD (\varphi) -  \overline {\rho g} \cdot \varphi \mathrm{d} y \xdif \tau
\end{equation*} 
for all test functions $\varphi \in {\cD}(Q_f)$. Similarly to Section~\ref{subsec:vnsh_dspt_continuity_equation}, the proof will be completed by showing the strong convergence of the density. However, unlike  Section~\ref{subsec:vnsh_dspt_continuity_equation}, we do not know a priori that the renormalized equation \eqref{eq:intro:rho_renormalized_continuity_equation} holds, as we do not have $\rho \in \xLtwo(Q_T)$. This difficulty dissapears if $\gamma \geq 9/5$ as $L^{\gamma + \theta(\gamma)}(Q_T) \subset \xLtwo(Q_T)$. To prove the strong convergence of the density, we proceed as in \cite[Equation (9.9)]{Feireisl2003ARM}, employing \cite[Proposition 4.3]{Feireisl2002ESA}, \cite[Lemma 4.2]{Feireisl2001JMF}, \cite[Lemma 7.55]{Novotny2004OUP} to derive an analogue of Lemma~\ref{lemma:vnsh_dspt:cvg_effective_viscous_flux}, namely, 
\begin{multline}
\lim \limits_{\delta \rightarrow 0} \int_0^T \int_{\xR^3} \varphi \left(a_f \rho_\delta^{\gamma_f} - \left(\frac43 \mu_f + \nu_f\right) \xdiv (u_\delta)\right) T_k(\rho_\delta) \xdif y \xdif \tau = \\ 
\int_0^T \int_{\xR^3} \varphi \left(a_f \overline {\rho^{\gamma_f}} - \left(\frac43 \mu_f + \nu_f \right) \xdiv (u)\right) \overline {T_k(\rho)} \xdif y \xdif \tau
\label{eq:vnsh_artificial_pressure:limit_viscous_effective_flux} 
\end{multline} 
for any $\varphi \in \cD (Q_f)$, where $T_k(\rho) = \min \{\rho, k\}$ are cut-off operators, and $\overline {T_k(\rho)}$ stands for the weak limit of $T_k(\rho_\delta)$.
As in Section~\ref{subsec:vnsh_dspt_continuity_equation}, the relation \eqref{eq:vnsh_artificial_pressure:limit_viscous_effective_flux} implies
\begin{equation}
\overline{T_k(\rho) \xdiv (u)} \geq \overline {T_k(\rho)} \xdiv (u) \; \mbox{on} \; (0, T) \times \xR^3, 
\label{eq:vnsh_artificial_pressure:tkr_dvu_inequality}
\end{equation} 
and proceeding as in \cite[Proposition 6.1]{Feireisl2001CMU}, we can prove that for any compact set $\cK_f \subset Q_f $ such that $|\cK_f| \leq M$, then  
\begin{equation*}
\sup_{k \geq 1} \left(\lim \limits_{\delta \rightarrow 0} \int_{\cK_f} |T_k(\rho_\delta) - T_k(\rho)|^{\gamma_f + 1} \xdif y \xdif \tau \right) \leq c \Big(M, \sup_{\delta > 0} \|\nabla u_\delta\|_{\xLtwo(\cK_f)} \Big).
\end{equation*} 
Let $\cK_b$ a compact subset of $Q_b$, by Lemma~\ref{lem:smlrt-prop:density_bubble}, we get $\rho_\delta \rightarrow \rho$ strongly in $\xLone(\cK_b)$. In particular, 
\begin{equation}
\sup_{k \geq 1} \left(\lim \limits_{\delta \rightarrow 0} \int_{\cK_b} |T_k(\rho_\delta) - T_k(\rho)|^{\gamma_f + 1} \xdif y \xdif \tau \right) = 0.
\label{eq:vnsh_artificial_pressure:oscillations_Kb}
\end{equation} 
Let $\cK \subset (0, T) \times \O$ compact such that  $|\cK| \leq M$. Let $\cU(k)$ a neighborhood of $\partial Q_b$ such that 
$|\cU(k)| \leq k^{-(\gamma_f + 1)}$, we have 
\begin{multline*}
\int_{\cK} |T_k(\rho_\delta) - T_k(\rho)|^{\gamma_f + 1} \xdif y \xdif \tau \\ 
\leq \int_{(\cK \setminus \cU(k)) \cap Q_f} |T_k(\rho_\delta) - T_k(\rho)|^{\gamma_f + 1} + \int_{\cU(k)} (2k)^{\gamma_f + 1}  + \int_{(\cK \setminus \cU(k)) \cap Q_b} |T_k(\rho_\delta) - T_k(\rho)|^{\gamma_f + 1}.
\end{multline*} 
Consequently
\begin{equation*}
\sup_{k \geq 1} \Big(\lim \limits_{\delta \rightarrow 0}  \int_{\cK} |T_k(\rho_\delta) - T_k(\rho)|^{\gamma_f + 1} \xdif y \xdif \tau \Big) \leq  c \Big(M, \sup_{\delta > 0} \|\nabla u_\delta\|_{\xLtwo(\cK_f)} \Big) + 2^{\gamma_f + 1}.
\end{equation*} 
We finally obtain
\begin{equation} 
\sup_{k \geq 1} \left(\lim \limits_{\delta \rightarrow 0} \int_{(0, T) \times \xR^3} |T_k(\rho_\delta) - T_k(\rho)|^{\gamma_f + 1} \xdif y \xdif \tau \right) < \infty.
\label{eq:vnsh_artificial_pressure:oscillations_OTO}
\end{equation} 
By virtue  \cite[Proposition 7.1]{Feireisl2001CMU}, \cite[Lemma 7.57]{Novotny2004OUP}, boundedness of the defection measure \eqref{eq:vnsh_artificial_pressure:oscillations_OTO} implies that $\rho$, $u$ satisfy the renormalized continuity equation \eqref{eq:intro:rho_renormalized_continuity_equation} where $b$ satisfies \eqref{eq:intro:b_cdt_renormalized_solution} with $-1 < \lambda_1 \leq \frac{s(\gamma_f)}{2} - 1$ and $s(\gamma_f) = \frac{5 \gamma_f - 3}{3}$. Based on this result, introducing  
\begin{equation*}
L_k(s) = 
\left\{
\begin{aligned}
& s \log(s),         & s \in [0, k), \\
& s \log(k) + s - k, & s \in [k, \infty), 
\end{aligned}
\right.
\end{equation*} 
we can prove that 
\begin{equation*}
\partial_t \left( \overline {L_k(\rho)} - L_k(\rho) \right) + \xdiv \left( (\overline {L_k(\rho)} - L_k(\rho)) u \right) + \left(\overline {T_k(\rho) \xdiv (u)} - T_k(\rho) \xdiv (u) \right) = 0
\end{equation*} 
which gives together with  \eqref{eq:vnsh_artificial_pressure:tkr_dvu_inequality}
\begin{equation*}
\sup_{k \geq 0} \left( \lim \limits_{\delta \rightarrow 0} \|T_k(\rho_\delta) - T_k(\rho)\|_{L^{\gamma_f + 1}((0, T) \times \xR^3)} \right) \leq 0.
\end{equation*}
This is enough to deduce that $\rho_\delta \rightarrow \rho$ in $\xCzero([0, T]; \xLone(\xR^3))$ for instance. This completes the proof of Theorem~\ref{theo:intro:main_existence_result}, cf \cite[Lemma 4.3, 4.4]{Feireisl2001CMU},\cite[Section 8]{Feireisl2002MMA}, \cite[Section 4.5, 4.6]{Feireisl2001JMF}, \cite[Lemma 7.60]{Novotny2004OUP}. \\ 

\noindent \textbf{Acknowledgment}\\ 
The authors acknowledge the support of ANR project MSM$\Phi$ ANR-23-CE40-0013-01. The authors would like to thank Hélène Mathis and Nicolas Seguin for fruitful discussions when preparing the paper. 

\bibliographystyle{plain}
\bibliography{Existence_Bubbly_Flows_HAL_SUBMISSION}
\end{document}